\documentclass[12pt,a4paper]{article}

\usepackage{latexsym}
\usepackage{amsmath}
\usepackage{amssymb}
\usepackage{arydshln}

\usepackage{cite}
\usepackage{stmaryrd}
\usepackage{enumerate}

\usepackage{hyperref}

\usepackage{color}
\usepackage{lineno}
\usepackage{graphicx}
\usepackage{ae}
\usepackage{amsmath}
\usepackage{amssymb}
\usepackage{latexsym}
\usepackage{url}
\usepackage{epsfig}
\usepackage{mathrsfs}
\usepackage{amsfonts}
\usepackage{amsthm}

\usepackage{float}
\usepackage{subfig}
\usepackage{tikz}
\usetikzlibrary{positioning}

\newtheorem{theorem}{Theorem}[section]

\newtheorem{prop}[theorem]{Proposition}
\newtheorem{lemma}[theorem]{Lemma}
\newtheorem{remark}{Remark}
\newtheorem{cor}[theorem]{Corollary}

\theoremstyle{definition}

\newtheorem{claim}{Claim}

\numberwithin{equation}{section} 
\allowdisplaybreaks    

\def\qed{\hfill$\Box$\vspace{12pt}}

\long\def\delete#1{}

\usetikzlibrary{decorations.markings}

\tikzstyle{vertex}=[circle, draw, inner sep=0pt, minimum size=6pt]
\tikzstyle{directed}=[postaction={decorate,
	decoration={markings,mark=at position 0.3 with {\arrow{stealth}}}
}]

\usepackage{xcolor}
\usepackage[normalem]{ulem}

\begin{document}
\title {Tur{\'a}n-type extremal problems for unbalanced signed graphs}

\author{Linfeng Xie$^{a,b}$,~Keheng Zhu$^{c}$,~Xiaogang Liu$^{a,b,}$\thanks{Supported by the National Natural Science Foundation of China (No. 12371358).}~$^,$\thanks{ Corresponding author. Email addresses: xielinfeng@mail.nwpu.edu.cn, 2240502168@cnu.edu.cn, xiaogliu@nwpu.edu.cn}~
	\\[2mm]
	{\small $^a$School of Mathematics and Statistics,}\\[-0.8ex]
	{\small Northwestern Polytechnical University, Xi'an, Shaanxi 710072, P.R.~China}\\
	{\small $^b$Xi'an-Budapest Joint Research Center for Combinatorics,}\\[-0.8ex]
	{\small Northwestern Polytechnical University, Xi'an, Shaanxi 710129, P.R. China}\\
	{\small $^c$School of Mathematical Sciences \& Academy for Multidisciplinary Studies,}\\[-0.8ex]
	{\small Captial Normal University, Beijing, 100048, P.R.~China}\\
}
\date{}

\openup 0.5\jot
\maketitle

\begin{abstract}
	In this paper, we establish two Tur{\'a}n-type results for signed graphs. We first generalize the classical Tur{\'a}n theorem to signed graphs and then extend Nikiforov's spectral Tur{\'a}n theorem to signed graphs. Moreover, we determine the second maximum spectral radius among all unbalanced signed graphs that contain no balanced complete signed subgraph on \(r+1\) vertices.
	\smallskip
	
	\emph{Keywords:} Tur{\'a}n graph; Spectral Tur{\'a}n problem; Signed graph; Extremal graph; Index; Spectral radius
	
	\emph{Mathematics Subject Classification (2020):} 05C50, 05C22
\end{abstract}

\section{Introduction}
A \emph{signed graph} $\Gamma=(G,\sigma)$ consists of a simple graph $G=(V(G),E(G))$ together with a signature $\sigma \colon E(G) \to \{+1,-1\}$ on its edge set. The unsigned graph $G$ is called the \emph{underlying graph} of $\Gamma$. In 1946, Heider \cite{Hei-JP-1946} proposed balance theory, which laid the conceptual foundation for the development of signed graphs. Building on Heider's work, Harary \cite{Har-MichM-1953} first formally established signed graph theory, aiming to mathematically formalize balance theory governing group relationships. After nearly three decades, Chaiken \cite{Chaiken-SIAM-1982} and Zaslavsky \cite{Zaslavsky-DAM-1982} independently obtained the matrix-tree theorem for signed graphs. For more information on signed graphs, please refer to \cite{Beck-Zaslavsky-JCTB-2006,Zaslavsky-book-2010,Zaslavsky-EJC-2018,Zaslavsky-JCTB-1987}.

Let $\Gamma=(G,\sigma)$ be a signed graph with the vertex set $V(\Gamma)=\left \{v_{1},v_{2},\dots ,v_{n}\right \}$ and the edge set $E(\Gamma)=\left \{e_{1},e_{2},\dots ,e_{m}\right \}$. Denote by $|V(\Gamma)|$ and $e(\Gamma)$ the order and the size of $\Gamma$, respectively. Let $N_{\Gamma}(v_i)$ denote the set of neighbors of a vertex $v_i$ in $\Gamma$ and $d_{\Gamma}(v_i)=|N_{\Gamma}(v_i)|$ the degree of $v_i$ in $\Gamma$. An edge $v_i v_j$ is called a \emph{positive edge} (respectively, \emph{negative edge}) if $\sigma(v_i v_j)=+1$ (respectively, $\sigma(v_i v_j)=-1$). The set of all positive edges (respectively, negative edges) in $\Gamma$ is denoted by $E^{+}(\Gamma)$ (respectively, $E^{-}(\Gamma)$).  The \emph{adjacency matrix} of $\Gamma$ is defined as $A(\Gamma)=(a_{ij}^{\sigma})$, where $a_{ij}^{\sigma}=\sigma(v_{i}v_{j})a_{ij}$ and $a_{ij}=1$ if $ v_{i} $ and $v_{j}$ are adjacent, and $a_{ij}=0$ otherwise. The eigenvalues of $A(\Gamma)$ are denoted by
$$
\lambda_{1}(\Gamma)\ge \lambda_{2}(\Gamma)\ge \dots \ge \lambda_{n}(\Gamma),
$$
which are called the \emph{adjacency spectrum} of $\Gamma$. The \emph{index} of $\Gamma$ is $\lambda_{1}(\Gamma)$. The \emph{spectral radius} of $\Gamma$ is defined as
$$
\rho(\Gamma)=\max\left \{\lambda_{1}(\Gamma),-\lambda_{n}(\Gamma)\right \}.
$$

Let $\emptyset \ne U\subseteq V(G)$. The operation of changing the signs of all edges between $U$ and $V(G) \setminus U$ is called a \emph{switching}, and is also referred to as \emph{switching $\Gamma$ at a vertex subset $U$}. A signed graph $\Gamma^{\prime}$ is said to be \emph{switching equivalent} to $\Gamma$ if $ \Gamma^{\prime}$ is obtained from $\Gamma$ by a finite sequence of switchings, denoted as $\Gamma\sim \Gamma^{\prime}$; otherwise, we write \(\Gamma\not\sim\Gamma'\). A cycle is called \emph{positive} if the number of its negative edges is even; otherwise, \emph{negative}. A signed graph is called \emph{balanced} if each of its cycles is positive; otherwise, \emph{unbalanced}.

For a given  graph $H$, a graph $G$ is called \emph{$H$-free} if $G$ contains no subgraph isomorphic to $H$. The \emph{Tur{\'a}n number} of $H$, denoted by $ex(n,H)$, is the maximum number of edges an $H$-free graph on $n$ vertices can have. Let $T_r(n)$ denote the Tur{\'a}n graph on $n$ vertices, that is,  the complete $r$-partite graph whose vertex set is partitioned into $r$ parts of almost equal size (each part has either $\lfloor n/r \rfloor$ or $\lceil n/r \rceil$ vertices), and every pair of vertices from different parts is adjacent. Let $K_{n}$ denote the complete graph with $n$ vertices. Mantel's theorem \cite{Mantel-1907} shows that $ex(n, K_3)=\lfloor \frac{n^2}{4} \rfloor$, and the only extremal graph is the $T_2(n)$. Tur{\'a}n's theorem \cite{Turan-MFL-1941}, regarded as the origin of extremal graph theory, determines $ex(n, K_{r+1})$.

\begin{theorem}\emph{(See \cite{Turan-MFL-1941})}\label{Turan-MFL-1941}
	Let $G$ be a graph with $n$ vertices. If $G$ is $K_{r+1}$-free, then
	\[
	e(G)\le e(T_{r}(n)),
	\]
	with equality holding if and only if $G\cong T_{r}(n)$.
\end{theorem}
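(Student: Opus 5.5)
We argue by induction on $n$, following Zykov's symmetrization, since that route yields both the bound and the uniqueness of the extremal graph. Let $G$ be a $K_{r+1}$-free graph on $n$ vertices with the maximum number of edges. It suffices to prove that such an extremal $G$ is complete multipartite with at most $r$ parts. Once that is known, the usual convexity argument finishes the proof. Among complete $t$-partite graphs with $t\le r$ and part sizes $n_1,\dots,n_t$, the edge count $\binom{n}{2}-\sum_i\binom{n_i}{2}$ is strictly increased in two ways: by splitting a part when $t<r$, and by moving a vertex from a part of size $n_i$ to a part of size $n_j\le n_i-2$. Hence the unique maximizer is $T_r(n)$, and every $K_{r+1}$-free graph satisfies $e(G)\le e(T_r(n))$, with equality only if $G$ is extremal and hence $G\cong T_r(n)$.

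The key step is to show that non-adjacency is an equivalence relation in an extremal $G$. Suppose it fails. Then there are vertices $u,v,w$ with $uv,vw\notin E(G)$ but $uw\in E(G)$. There are two cases.

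\textbf{Case 1: $d(v)<d(u)$ (or symmetrically $d(v)<d(w)$).} Delete $v$ and add a clone $u'$ of $u$, that is, a new vertex adjacent exactly to $N(u)$. The clone is non-adjacent to $u$, so any clique contains at most one of $u,u'$. Therefore the new graph is still $K_{r+1}$-free. Its edge count is $e(G)-d(v)+d(u)>e(G)$, which contradicts maximality.

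\textbf{Case 2: $d(v)\ge d(u)$ and $d(v)\ge d(w)$.} Delete $u$ and $w$ and add two clones of $v$. The clones are non-adjacent to $v$ and to each other, so the new graph is again $K_{r+1}$-free. We lose $d(u)+d(w)-1$ edges, since the edge $uw$ is counted twice, and gain $2d(v)$ edges; note that $v$ is adjacent to neither $u$ nor $w$, so no edges of the clones are lost. The net change is at least $1$, again a contradiction.

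It follows that non-adjacency is transitive, so $G$ is complete multipartite with the non-adjacency classes as its parts. Because $G$ is $K_{r+1}$-free, choosing one vertex from each part shows there are at most $r$ parts.

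The main obstacle is the careful edge accounting in Case 2, in particular checking that the edge $uw$ is subtracted only once and that the clones' edges to $v$'s neighbours survive the deletions. For the equality clause, the argument must be run on an arbitrary extremal graph rather than a specially chosen one. Both the symmetrization and the convexity step give strict increases, so no non-Tur\'an graph can attain $e(T_r(n))$.
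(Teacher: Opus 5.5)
Your proof is correct. It is Zykov's symmetrization, and the edge accounting in both cases checks out. Two small points are implicit. In Case 1 you use $uv\notin E(G)$, so that the copy of $u$ keeps all $d(u)$ edges once $v$ is deleted. The ``split a part'' step needs a part of size at least $2$; this fails only when $n=t<r$, and then $G=K_n=T_r(n)$ anyway.

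Your route is genuinely different from the paper's, because the paper never proves this theorem directly. It quotes it from Tur\'an and uses it as a black box in the proof of Theorem~\ref{signed Turan theorem}. The only derivation it offers is Remark~\ref{unsigned-signed Turan graph}. There it takes an extremal $G$ and inserts one negative edge inside a neighbourhood that is not a clique. This gives an unbalanced $\mathcal{K}_{r+1}^+$-free $\Gamma$ with $S(\Gamma)=e(G)-1$, and the remark reads off the bound and the extremal graph from Theorem~\ref{signed Turan theorem} and Proposition~\ref{switching equivalence class}.

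Since Theorem~\ref{signed Turan theorem} is itself deduced from Tur\'an's theorem, that remark only shows the signed and unsigned statements are equivalent. Your argument is the self-contained proof. It also gives uniqueness directly for every extremal graph, with no switching considerations.

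Your two cases are exactly the degree version of the paper's Claim~\ref{complete r-partite} in the proof of Theorem~\ref{Classical spectral Turan for s-g}. There $d(\cdot)$ is replaced by the eigenvector-weighted degree $\sum_{v_k\in N(\cdot)}x_k$. Case~1 turns $v$ into a copy of $u$ (or $w$), and Case~2 turns both $u$ and $w$ into copies of $v$. The extra term $2x_ux_w$ there plays the role of your $+1$ from the doubly counted edge $uw$.
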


The spectral Tur{\'a}n theorem can be traced back to 1970, when Nosal \cite{Nosal-Phd-1970} proved that if $G$ is a \(K_3\)-free graph with $m$ edges, then $\lambda_1(G)\le \sqrt{m}$, with equality for complete bipartite graphs. In particular, using Rayleigh's inequality, we have $\frac{2m}{n}\le \lambda_1(G) \le \sqrt{m}$, which implies $m \le \lfloor \frac{n^2}{4} \rfloor$. Thus, Nosal's theorem can be viewed as  a spectral version of Mantel's theorem. In 2007, Nikiforov \cite{Nikiforov-LAA-2007} proved the spectral version of Tur{\'a}n's theorem.

\begin{theorem}\emph{(See \cite[Theorem~1]{Nikiforov-LAA-2007})}\label{Nikiforov-LAA-2007}
	Let $G$ be a graph with $n$ vertices. If $G$ is $K_{r+1}$-free, then
	\[
	\lambda_1(G)\le \lambda_{1}(T_{r}(n)),
	\]
	with equality holding if and only if $G\cong T_{r}(n)$.
\end{theorem}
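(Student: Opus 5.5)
We will prove Theorem~\ref{Nikiforov-LAA-2007} via Zykov symmetrization, applied to the Perron vector rather than to edge counts. Let $G$ be a $K_{r+1}$-free graph on $n$ vertices. Among all such graphs we may take $G$ with $\lambda_1(G)$ maximum, and it suffices to show that any maximizer is $T_r(n)$. Let $\mathbf{x}$ be a unit nonnegative eigenvector for $\lambda_1(G)$. Adding an edge to $G$ strictly increases $\lambda_1$ whenever the component involved is connected (Perron--Frobenius). So we may first reduce to the case that $G$ is connected, or handle components separately, since the index of a disjoint union is the maximum over its components.

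\textbf{Step 1 (symmetrization).} Suppose $u,v$ are nonadjacent and $x_u\ge x_v$. Form $G'$ by deleting all edges at $v$ and joining $v$ to every vertex of $N(u)$, so that $v$ becomes a twin of $u$. Then $G'$ is still $K_{r+1}$-free, because any clique through $v$ avoids $u$ and can be transferred to $u$. Using the Rayleigh quotient with the same vector $\mathbf{x}$,
\[
\mathbf{x}^{T}A(G')\mathbf{x}-\mathbf{x}^{T}A(G)\mathbf{x}=2x_v\Big(\sum_{w\in N(u)}x_w-\sum_{w\in N(v)}x_w\Big)=2x_v(\lambda_1 x_u-\lambda_1 x_v)\ge 0,
\]
so $\lambda_1(G')\ge\lambda_1(G)$. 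Equality forces $\mathbf{x}$ to be an eigenvector of $G'$ as well.

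\textbf{Step 2 (iterate).} Repeat the symmetrization in the standard Zykov order, always replacing vertices by twins of vertices of larger weight within a nonadjacency class. This shows that nonadjacency becomes an equivalence relation, so we end with a complete multipartite $K_{r+1}$-free graph, that is, a complete $k$-partite graph with $k\le r$, whose index is at least $\lambda_1(G)$. Adding edges cannot decrease the index, so we may refine to exactly $r$ parts, provided $n\ge r$.

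\textbf{Step 3 (balancing).} For a complete $r$-partite graph with part sizes $n_1,\dots,n_r$, we show that moving a vertex from a larger part to a smaller one, whenever $n_i\ge n_j+2$, strictly increases $\lambda_1$. One route is the characteristic equation $\sum_i \frac{n_i}{\lambda+n_i}=1$ for the index of a complete multipartite graph; convexity of $t\mapsto t/(\lambda+t)$ then gives the comparison. Alternatively, one can use the Rayleigh quotient with the old Perron vector, suitably adjusted.

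\textbf{Main obstacle: the equality case.} Tracing the equality conditions is the hard part: we must show that a maximizer $G$ is itself complete $r$-partite. Here the plan is to use connectivity together with strict positivity of $\mathbf{x}$. If $u\not\sim v$ and $N(u)\ne N(v)$, we compare the weight sums. When $x_u=x_v$, symmetrize in whichever direction changes the graph: the index is preserved only if $\mathbf{x}$ remains an eigenvector of $G'$. Looking at the eigen-equation at a vertex $w\in N(u)\setminus N(v)$, whose neighbourhood gains $v$ in $G'$, then yields a contradiction, since $x_v>0$. Hence nonadjacent vertices in a maximizer are twins, so the maximizer is complete multipartite, and Step~3's strict inequality forces it to be $T_r(n)$.
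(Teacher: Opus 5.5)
The paper does not prove this theorem; it quotes it from Nikiforov. Your argument is correct. It is essentially the unsigned specialization of the strategy the paper uses for its signed analogue, Theorem~\ref{Classical spectral Turan for s-g}:
\begin{itemize}
\item take an extremal graph with a nonnegative Perron vector;
\item use Rayleigh-quotient vertex replacements to show that nonadjacent vertices are twins (Claim~\ref{complete r-partite});
\item fill up to exactly $r$ parts;
\item balance the part sizes.
\end{itemize}

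The differences are in execution. Your equality step is cleaner than the paper's: when $x_u=x_v$, the vector $\mathbf{x}$ remains an eigenvector of $G'$, and the eigen-equation at $w\in N(u)\setminus N(v)$ forces $x_v=0$. The paper instead splits into cases by comparing the neighbourhood sums of $u$, $v$, $w$, and then applies Zykov's two-vertex move. For balancing, you use $\sum_i n_i/(\lambda+n_i)=1$ (Lemma~\ref{Cvetkovic-Doob-Sachs-Book-1980}). The paper uses that device only to compare $T_r^\star(n)$, $T_r^\diamond(n)$ and $T_r(n)$; its own balancing claim moves a single vertex and estimates the Rayleigh quotient using $(\lambda_1+n_i)x_i=(\lambda_1+n_j)x_j$.

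You need three small corrections.

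\textbf{Concave, not convex.} The map $\phi(t)=t/(\lambda+t)$ is strictly \emph{concave}, since $\phi''(t)=-2\lambda/(\lambda+t)^3<0$. Concavity is what gives $\phi(n_i-1)+\phi(n_j+1)>\phi(n_i)+\phi(n_j)$ when $n_i\ge n_j+2$. So the new secular function exceeds $1$ at the old index, and being strictly decreasing, it has a larger root. With convexity the inequality would run the wrong way.

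\textbf{Step~2 is unnecessary.} Your final paragraph proves directly that every maximizer is complete multipartite, so you can drop Step~2. As written it is also imprecise, because the Perron vector changes after every move and the order of moves is never pinned down.

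\textbf{State the strictness you use.}
\begin{itemize}
\item A maximizer is connected. A bridge from a component attaining $\lambda_1$ to another component lies in no $K_{r+1}$ when $r\ge 2$, and adding it strictly raises $\lambda_1$.
\item Suppose a maximizer is complete $k$-partite with $k<r$ and $k<n$. Splitting a part of size at least $2$ adds edges and yields a connected graph, which strictly raises $\lambda_1$. This is what excludes $k<r$ in the equality case; the non-strict phrase ``adding edges cannot decrease the index'' is not enough there.
\end{itemize}
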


Since $\frac{2m}{n}\le \lambda_1(G)$ and $e(T_{r}(n))=\left\lfloor \frac{n}{2}\lambda_{1}(T_{r}(n)) \right\rfloor$, we have
\[
e(G)\le \left\lfloor \frac{n}{2}\lambda_1(G)\right\rfloor \le \left\lfloor \frac{n}{2}\lambda_{1}(T_{r}(n)) \right\rfloor=e(T_{r}(n)).
\]
Thus, the spectral bound from Theorem \ref{Nikiforov-LAA-2007} implies the edge bound given by Theorem \ref{Turan-MFL-1941}. In 2009, Nikiforov \cite{Nikiforov-JCTB-2009} asked a question: Is it true that if
$\lambda_1(G)<\lambda_{1}(T_{r}(n))$, then $e(G)< e(T_{r}(n))$? Most recently, Liu and Ning \cite{LB-ar-2026}  answered this question positively in a stronger form. For more details on the aforementioned two Tur{\'a}n problems in unsigned graphs, we refer the readers to \cite{Ni-Wang-Kang-EJC-2023, Yuan-Wang-Zhai-EJLA-2012, Chen-Lei-Li-EJC-2025, Nikiforov-LAA-2017,Bollobas-Nikiforov-JCTB-2007,Dou-Ning-Peng-AAM-2025,Erods-Gallai-1959,Fang-Zhu-Chen-EJC-2025}.

For a given family of signed graphs $\mathcal{F}$, a signed graph $\Gamma$ is called \emph{$\mathcal{F}$-free} if $\Gamma$ contains no subgraph isomorphic to any member of $\mathcal{F}$. Let $\mathcal{K}_{r+1}^+$ (respectively, $\mathcal{K}_{r+1}^{-}$) be the sets of all balanced (respectively, unbalanced) signed graphs whose underlying graph is $K_{r+1}$. Let $(G, +)$ denote the signed graph with all positive edges, whose underlying graph is $G$. Let $T_{r}^{\star}(n)$ (respectively, $T_{r}^\diamond(n)$) be the signed graph obtained by adding a negative edge within one partition class of size \(\lceil n/r \rceil\) (respectively, $\lfloor n/r \rfloor$) of \((T_r(n),+)\).

In this paper, we first generalize Theorem \ref{Turan-MFL-1941} to signed graphs, as follows.

\begin{theorem}\label{signed Turan theorem}
	Let $\Gamma$ be an unbalanced signed graph with $n$ vertices, and define
	$$
	S(\Gamma)=\max_{\Gamma^{\prime} \sim \Gamma} \left(|E^+(\Gamma')|-|E^-(\Gamma')|\right).
	$$
	If $\Gamma$ is $\mathcal{K}_{r+1}^+$-free and $r\ge 2$, then
	\[
	S(\Gamma)\le S(T_{r}^{\star}(n)),
	\]
	with equality holding if and only if $\Gamma\sim T_{r}^{\star}(n)$ or \(\Gamma \sim T_{r}^\diamond(n)\).
\end{theorem}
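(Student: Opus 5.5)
The plan is to fix a switching $\Gamma'\sim\Gamma$ that attains the maximum in the definition of $S(\Gamma)$. Let $E^+=E^+(\Gamma')$ and $E^-=E^-(\Gamma')$, and let $G^+$ be the unsigned spanning subgraph of the underlying graph $G$ with edge set $E^+$. Then $S(\Gamma)=|E^+|-|E^-|$, and I bound the two terms separately.

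First, switching preserves the sign of every cycle, so $\Gamma'$ is unbalanced. An all-positive signed graph is balanced, so $|E^-|\ge 1$. Second, $G^+$ is $K_{r+1}$-free. Indeed, a copy of $K_{r+1}$ in $G^+$ is an all-positive complete subgraph of $\Gamma'$, hence balanced. Switching $\Gamma'$ back to $\Gamma$ carries it to a balanced complete subgraph of $\Gamma$ on the same vertex set, since switching preserves balance of subgraphs. This contradicts $\Gamma$ being $\mathcal{K}_{r+1}^+$-free. Theorem~\ref{Turan-MFL-1941} then gives $|E^+|\le e(T_r(n))$, and therefore
\[
S(\Gamma)\le e(T_r(n))-1 .
\]

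Next I compute $S(T_r^{\star}(n))$ and check that it is admissible. The graph $T_r^{\star}(n)$ has $e(T_r(n))$ positive edges and one negative edge $uv$ inside a part. It is unbalanced: since $r\ge 2$, there is a vertex $w$ in another part, and $uvw$ is a negative triangle. Every switching of it still has at least one negative edge and at most $e(T_r(n))+1$ edges, so $S(T_r^{\star}(n))=e(T_r(n))+1-2=e(T_r(n))-1$. It is also $\mathcal{K}_{r+1}^+$-free. Any $K_{r+1}$ in its underlying graph must contain two vertices of the same part, because $T_r(n)$ is $K_{r+1}$-free. The only edge inside a part is $uv$, so such a clique contains $uv$ together with a vertex $w$ of another part. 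The triangle $uvw$ is negative, so the clique is unbalanced. The same argument applies verbatim to $T_r^\diamond(n)$ whenever it is defined, i.e.\ when $\lfloor n/r\rfloor\ge 2$. This gives the inequality and shows both graphs attain equality.

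For the equality characterization, suppose $S(\Gamma)=e(T_r(n))-1$. Then both bounds are tight: $|E^+|=e(T_r(n))$ and $|E^-|=1$. By the uniqueness part of Theorem~\ref{Turan-MFL-1941}, $G^+\cong T_r(n)$. The single negative edge is not a positive edge, since $G$ is simple, so it must join two vertices in the same part of this Tur\'an graph. That part has size $\lceil n/r\rceil$ or $\lfloor n/r\rfloor$, so $\Gamma'$ is $T_r^{\star}(n)$ or $T_r^\diamond(n)$, and $\Gamma\sim\Gamma'$. The only delicate point is the transfer of $\mathcal{K}_{r+1}^+$-freeness through switching in the first step. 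This is immediate once one notes that switching $\Gamma$ at $U$ restricts on any subgraph to switching that subgraph at its intersection with $U$, so balance of each subgraph is switching-invariant.
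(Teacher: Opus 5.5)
Your proof is correct and follows essentially the same route as the paper. You bound $|E^+(\Gamma')|$ by applying Tur\'an's theorem to the all-positive part of a switching, use unbalance to force $|E^-(\Gamma')|\ge 1$, and read off the extremal structure from the uniqueness case of Tur\'an's theorem. You also justify points the paper only asserts or leaves implicit. These are the switching-invariance of $\mathcal{K}_{r+1}^+$-freeness, and the fact that $T_r^\star(n)$ and $T_r^\diamond(n)$ are unbalanced, $\mathcal{K}_{r+1}^+$-free, and attain the bound (the ``if'' direction). This makes your write-up slightly more complete than the paper's.
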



\begin{figure}[H]
	\hspace{-1.5cm} 
	\begin{minipage}[c]{0.6\textwidth}
		\hspace{2.5cm}
		\begin{tikzpicture}[scale=1.0]
			
			\draw (2,0) ellipse (1.5 and 3);
			
			\node at (2,3.6) {$K_{n-2}$};
			\node at (2,-3.6) {$\Gamma^{1,n-3}$};
			
			\node[circle,fill,inner sep=1.5pt,label=left:$u$] (u) at (-0.6,1) {};
			\node[circle,fill,inner sep=1.5pt,label=left:$v$] (v) at (-0.6,-1) {};
			\node[circle,fill,inner sep=1.5pt,label=right:$u_1$] (u1) at (2,2.3) {};
			\node[circle,fill,inner sep=1.5pt,label=right:$w_1$] (w1) at (2,1.5) {};
			\node[circle,fill,inner sep=1.5pt,label=right:$w_2$] (w2) at (2,0.7) {};
			\node at (2,0) {$\vdots$};
			
			\node at (2,-0.95) {$\vdots$};
			\node[circle,fill,inner sep=1.5pt,label=right:$w_{n-3}$] (w_{n-3}) at (2,-1.7) {}; 
			
			\draw[line width=1.5pt][dashed] (v) -- (u);
			\draw [line width=1.5pt](u) -- (u1);
			\draw [line width=1.5pt](v) -- (w1);
			\draw [line width=1.5pt](v) -- (w2);
			\draw [line width=1.5pt](v) -- (w_{n-3});
		\end{tikzpicture}
	\end{minipage}
	\hfill
	\begin{minipage}[c]{1\textwidth}
		\begin{tikzpicture}[scale =1]
			\draw (2,0) ellipse (1.5 and 3);
			
			\node at (2,3.6) {$K_{n-1}$};
			\node at (2,-3.6) {$\Gamma_{s,n}$};
			
			\node[circle,fill,inner sep=1.5pt,label=left:$v_n$] (v_n) at (-0.6,0) {};
			
			\node[circle,fill,inner sep=1.5pt,label=right:$v_1$] (v1) at (2,2.3) {};
			\node[circle,fill,inner sep=1.5pt,label=right:$v_2$] (v2) at (2,1.6) {};
			\node at (2,1.1) {$\vdots$};
			\node[circle,fill,inner sep=1.5pt,label=right:$v_{s+1}$] (vtm) at (2,0.4) {};
			\node[circle,fill,inner sep=1.5pt,label=right:$v_{s+2}$] (vt) at (2,-0.3) {};
			\node at (2,-0.95) {$\vdots$};
			\node[circle,fill,inner sep=1.5pt,label=right:$v_{n-1}$] (v_{n-1}) at (2,-1.7) {}; 
			
			\draw[line width=1.5pt][dashed] (v_n) -- (v1);
			\draw [line width=1.5pt](v_n) -- (v2);
			\draw [line width=1.5pt](v_n) -- (vtm);
		\end{tikzpicture}
		\hspace{2cm}
	\end{minipage}
	\caption{The signed graphs $\Gamma^{1,n-3}$ and $\Gamma_{s,n}$.}
	\label{figure-1} 	
\end{figure}

See Fig. \ref{figure-1}. Let $\Gamma^{1,n-3}$ be the signed graph obtained from the all-positive clique $(K_{n-2},+)$ with $V(K_{n-2})=\{u_1,w_1,\dots,w_{n-3}\}$, along with two isolated vertices $u$ and $v$, by adding a negative edge $uv$ and positive edges $\{uu_1,vw_1,\dots,vw_{n-3}\}$. Let  $\Gamma_{s,n}$ be the signed graph obtained by adding a new vertex $v_n$ to $(K_{n-1},+)$ with $V(K_{n-1})=\left\{v_1,v_2,\dots,v_{n-1}\right\}$ and joining $v_n$ to the $s+1$ vertices $v_1,v_2,\dots,v_{s+1}$, where $v_n v_1$ is the unique negative edge.

There are analogous spectral Tur{\'a}n results on signed graphs. Let $\Gamma$ be an unbalanced signed graph with order $n$. In 2024, Wang and Hou \cite{Wang-Hou-Li-LAA-2024} proved that if $\Gamma$ is $\mathcal{K}_{3}^-$-free, then
\[
\rho(\Gamma)\le \frac{1}{2}\sqrt{n^{2}-8}+n-4,
\]
with equality holding if and only if $\Gamma \sim \Gamma^{1,n-3}$. Also in 2024, Chen and Yuan \cite{Chen-Yuan-AMC-2024} proved that if $\Gamma$ is $\mathcal{K}_{4}^-$-free, then
\[
\rho(\Gamma)\le \rho(\Gamma_{1,n}),
\]
with equality holding if and only if $\Gamma \sim \Gamma_{1,n}$. In the same year, Wang \cite{Wang-LAA-2024} proved that if $\Gamma$ is $\mathcal{K}_{5}^-$-free, then
\[
\rho(\Gamma)\le \rho(\Gamma_{2,n}),
\]
with equality holding if and only if $\Gamma \sim \Gamma_{2,n}$. In the same year, Xiong and Hou \cite{Xiong-Hou-AC-2024} proved that if $\Gamma$ is $\mathcal{K}_{s+1}^-$-free with $3\le s\le n-1$, then
\[
\rho(\Gamma)\le \rho(\Gamma_{s-2,n}),
\]
with equality holding if and only if $\Gamma \sim \Gamma_{s-2,n}$. For more details regarding the spectral Tur{\'a}n problem in signed graphs, we refer the readers to to \cite{Li-Qin-DAM-2026,Wang-Lin-DAM-2024,Wang-Hou-Huang-DAM-2025,Xie-Liu-ar-2026-4}.


Secondly, we establish the signed version of Theorem \ref{Nikiforov-LAA-2007}, as follows.

\begin{theorem}\label{Classical spectral Turan for s-g}
	Let $r\ge 2$ and $n\ge r+1$. If $\Gamma$ is an unbalanced $\mathcal{K}^{+}_{r+1}$-free signed graph of order $n$, then
	\[
	\lambda_{1}(\Gamma)\le \lambda_{1}(T_{r}^\star(n)),
	\]
	with equality holding if and only if $\Gamma \sim T_{r}^\star(n)$.
\end{theorem}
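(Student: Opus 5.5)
We argue by taking an extremal signed graph and adjusting it through switching. Let $\Gamma$ be an unbalanced $\mathcal{K}^{+}_{r+1}$-free signed graph of order $n$ whose index $\lambda_1(\Gamma)$ is maximum. Switching does not change the spectrum, and it preserves both unbalancedness and $\mathcal{K}^{+}_{r+1}$-freeness. Let $x$ be a unit eigenvector for $\lambda_1(\Gamma)$. Switching at the set of vertices where $x$ is negative, we may assume $x\ge 0$. Then
\[
\lambda_1(\Gamma)=x^{T}A(\Gamma)x=\sum_{uv\in E^+}2x_ux_v-\sum_{uv\in E^-}2x_ux_v .
\]

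\textbf{Step 1: reduce to a small negative set.} Since $\Gamma$ is unbalanced, it contains a negative cycle. We would show that a maximizer can be taken with $E^-(\Gamma)$ very small. First, deleting a negative edge $uv$ with $x_ux_v>0$ does not decrease the Rayleigh quotient. If the graph stays unbalanced after the deletion, extremality forces equality, which gives $x_ux_v=0$. Second, adding a positive edge only helps unless it creates a positive $K_{r+1}$ or destroys unbalancedness; adding a positive edge can never destroy unbalancedness. So the plan is: among maximizers, choose one minimizing $|E^-|$, and argue that the negative edges form a minimal set keeping the graph unbalanced. Ideally this is a single negative edge $uv$ lying on a negative cycle.

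\textbf{Step 2: compare with $T_r^\star(n)$ via Nikiforov.} Let $\Gamma^{+}=\Gamma-E^-(\Gamma)$, an unsigned graph. Every clique in $\Gamma^+$ is all-positive, so $\Gamma^+$ is $K_{r+1}$-free. By Theorem \ref{Nikiforov-LAA-2007}, $\lambda_1(\Gamma)\le \lambda_1(\Gamma^+)\le\lambda_1(T_r(n))$. To get the sharper bound by $\lambda_1(T_r^\star(n))$, we split into two cases.

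If $\Gamma^+\cong T_r(n)$, then every negative edge lies inside a part, and $T_r(n)$ plus a negative edge inside a part is unbalanced. The index is then maximized over the choice of part, which we settle by comparing $T_r^\star(n)$ with $T_r^\diamond(n)$ through their quotient matrices. A single negative edge inside a larger part is better, because the Perron entries are smaller there (the entries are inversely related to the degrees). Adding further negative edges only lowers the index, again because all entries of the positive Perron vector are strictly positive.

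If $\Gamma^+\not\cong T_r(n)$, we need a stability-type gap $\lambda_1(T_r(n))-\lambda_1(\Gamma^+)$ that beats the loss $\lambda_1(T_r(n))-\lambda_1(T_r^\star(n))$. The latter loss is roughly $2x_ux_v=\Theta(1/n)$ for a negative edge inside a part.

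\textbf{Main obstacle.} This last comparison is the hard part. My first attempt would use the Rayleigh quotient and the eigenvector of $\Gamma$ directly. Write $\lambda_1(\Gamma)\le \lambda_1(\Gamma^+)-2x_ux_v$. Then consider $\Gamma^+ + uv$: if it is still $K_{r+1}$-free, it is a graph with more edges, and Theorem \ref{Nikiforov-LAA-2007} bounds its index. Otherwise $u,v$ have $r-1$ common neighbours forming a positive clique, and so on. An alternative is to use the known second-largest index among $K_{r+1}$-free graphs: every $K_{r+1}$-free graph other than $T_r(n)$ has index at most that of $T_r(n)$ minus an edge plus a pendant modification, which is about $\lambda_1(T_r(n))-\Theta(1/n)$. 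We would then compare constants explicitly via characteristic polynomials of the quotient matrices.

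Equality analysis. Equality forces $\Gamma^+\cong T_r(n)$ with exactly one negative edge in a part of size $\lceil n/r\rceil$, so $\Gamma\sim T_r^\star(n)$.
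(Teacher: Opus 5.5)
Your proposal has a genuine gap, and it is where you flagged it. In Step 2, the case $\Gamma^+\not\cong T_r(n)$ is never handled. Moreover, the criterion you set yourself there is false: you ask for an unsigned spectral gap $\lambda_1(T_r(n))-\lambda_1(\Gamma^+)$ larger than the loss $\lambda_1(T_r(n))-\lambda_1(T_r^\star(n))$.

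Take $r=2$, $n=6$. Here $\lambda_1(T_2^\star(6))\approx 2.718$ is the largest root of $x^3+x^2-9x-3$, and this root lies below $1+\sqrt3$ because the cubic equals $2-\sqrt3>0$ there. On the other hand, the triangle-free graphs $K_{3,3}-e$ and $K_{4,2}$ have indices $1+\sqrt3$ and $2\sqrt2$, both larger.

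Asymptotically, for $n=rq$ the loss for $T_r^\star(n)$ is about $2/n$. By comparison, $K_{q+1,q-1,q,\dots,q}$ loses only about $2(r-1)/(rn)$, and $T_r(n)-e$ also loses only about $2/n$; at $n=6$ it is exactly the $K_{3,3}-e$ above. So $\lambda_1(\Gamma^+)$ alone can exceed $\lambda_1(T_r^\star(n))$. Your ``alternative'' via the second-largest unsigned index therefore compares constants the wrong way.

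Any bound along your lines must come from the term $2\sum_{uv\in E^-}x_ux_v$, evaluated on the eigenvector of $\Gamma$ itself. That requires a lower bound on those entries, uniform over all candidate graphs, and the proposal supplies none. The phrase ``and so on'' for the case where $\Gamma^++uv$ contains a $K_{r+1}$ is not an argument. A general $\Gamma^+$ also has no small quotient matrix whose characteristic polynomial you could compare.

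Step 1 is incomplete as well. Knowing that each negative edge is essential for unbalance, or that $x_ux_v=0$ on it, does not give a single negative edge. A negative cycle carrying three negative edges has every one of them essential. Strictness in these edge-deletion arguments also needs control on zero entries of $x$, which you never establish.

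The paper never passes to $\Gamma^+$. With $x\ge 0$ it proceeds as follows:
\begin{enumerate}
\item It shows $x$ has at most one zero entry, by comparing the Rayleigh lower bound $\mathbf 1^TA(T_r^\star(n))\mathbf 1/n$ with the bound $(1-1/r)(n-2)$ that $\lambda_1\le(1-1/\omega_b)n$ gives for $\Gamma'-v_i-v_j$.
\item It shows all negative edges lie on a shortest negative cycle.
\item It shows that flipping two of those negative edges strictly raises the index. They have no common neighbours, so no balanced $K_{r+1}$ appears. This leaves exactly one negative edge $v_1v_2$, with all other entries positive.
\item It applies Zykov symmetrization to $\Gamma'-\{v_1,v_2\}$, puts $v_1,v_2$ in a common part, and balances the part sizes.
\end{enumerate}
Each move strictly increases $x^TAx$ while preserving unbalance and $\mathcal{K}^+_{r+1}$-freeness. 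The negative edge is thus carried through the whole argument, and no unsigned stability estimate is needed. Only your case $\Gamma^+\cong T_r(n)$ survives, and it matches the paper's final comparison of $T_r^\star(n)$ with $T_r^\diamond(n)$.
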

For positive integers $n$ and \(r\ge 2\), let \(\Gamma_n(r)\) denote the unbalanced signed complete $r$-partite graph on $n$ vertices, whose partite sizes satisfy \(n_1\ge \cdots\ge n_r\), \(n_1+\cdots+n_r=n\) and \(n_1-n_r\le 1\), and which contains exactly one negative edge \(u_0v_0\), where \(u_0\) lies in the partite set of size \(n_1\) and \(v_0\) lies in the partite set of size \(n_2\).

Finally, we determine the second maximum index among all $\mathcal{K}^{+}_{r+1}$-free unbalanced signed graphs.
\begin{theorem}\label{Classical spectral Turan for s-g-2}
	Let $r\ge 2$ and $n\ge r+1$. If $\Gamma$ is an unbalanced $\mathcal{K}^{+}_{r+1}$-free signed graph of order $n$ and $\Gamma \not \sim T_{r}^\star(n)$, then
	\[
	\lambda_1(\Gamma )\le
	\begin{cases}
		\lambda_1(T_{r}^\diamond(n)), & \text{~if~} s\ne 0, \\[6pt]
		\lambda_1(\Gamma_n(r)),   & \text{~if~}  s=0,
	\end{cases}
	\]
	with equality holding if and only if
	\[
	\Gamma \sim
	\begin{cases}
		T_{r}^\diamond(n), & \text{~if~}s\ne 0, \\[6pt]
		\Gamma_n(r),   &\text{~if~} s=0.
	\end{cases}
	\]
	
\end{theorem}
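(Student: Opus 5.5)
Throughout, $s$ is the residue in $n=qr+s$ with $0\le s<r$. When $s=0$ all parts have equal size, so $T_r^\diamond(n)=T_r^\star(n)$, and this is why a different extremal graph, $\Gamma_n(r)$, appears in that case.

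The plan is to reduce the signed problem to an unsigned one on the positive part, in the same way I would prove Theorem \ref{Classical spectral Turan for s-g}. Let $\Gamma$ maximize $\lambda_1$ among unbalanced $\mathcal{K}^+_{r+1}$-free signed graphs of order $n$ with $\Gamma\not\sim T_r^\star(n)$. Let $x$ be a unit eigenvector for $\lambda_1(\Gamma)$. Switching at the set of vertices with negative coordinates does not change $\lambda_1$ or the hypotheses, so we may assume $x\ge 0$. Write $G^+$ for the spanning subgraph of positive edges and $E^-$ for the set of negative edges. Then
\[
\lambda_1(\Gamma)=x^{T}A(\Gamma)x=x^{T}A(G^+)x-2\sum_{uv\in E^-}x_ux_v\le \lambda_1(G^+)-2\sum_{uv\in E^-}x_ux_v .
\]
A positive $K_{r+1}$ is balanced, so $G^+$ is $K_{r+1}$-free. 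Since $\Gamma$ is unbalanced, $E^-\neq\emptyset$ in every switching.

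\textbf{Step 1 (lower bounds).} Compute $\lambda_1(T_r^\diamond(n))$ (for $s\ne0$) and $\lambda_1(\Gamma_n(r))$ (for $s=0$) via equitable quotient matrices. The two endpoints of the negative edge are split off as singleton cells, and the remaining vertices are grouped by part size. Show both values exceed $\lambda_1(T_r(n))-c/n$ for an explicit constant $c$. Also record the strict inequality $\lambda_1(\Gamma_n(r))<\lambda_1(T_r^\star(n))$ when $s=0$.

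\textbf{Step 2 (structure).} Suppose $G^+\ncong T_r(n)$ and $G^+$ is not $T_r(n)$ minus one edge. Then I will invoke the known second-largest spectral radius bound for $K_{r+1}$-free graphs: such graphs have $\lambda_1$ at most roughly $\lambda_1(T_r(n))-\Theta(1/n)$, with a constant that beats Step 1. A Perron-vector argument for the negative-edge term is also needed. Using the eigen-equation together with $\lambda_1(\Gamma)\approx (1-1/r)n$, all coordinates of $x$ are $\Theta(1/\sqrt n)$. Hence each negative edge costs $\Theta(1/n)$, and adding one forces $\lambda_1(\Gamma)$ below the Step 1 bound. Consequently $G^+$ is either $T_r(n)$ or $T_r(n)-e$, and $|E^-|=1$. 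More negative edges only subtract more, and by maximality we can delete extra ones while staying unbalanced and $\mathcal{K}^+_{r+1}$-free.

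\textbf{Step 3 (finite comparison).} Two configurations survive.
\begin{itemize}
\item[(a)] $G^+=T_r(n)$ and the negative edge lies inside a part. Here the part size is $\lceil n/r\rceil$, giving $T_r^\star$ (excluded), or $\lfloor n/r\rfloor$, giving $T_r^\diamond$. The latter is available only if $s\ne0$.
\item[(b)] $G^+=T_r(n)-uv$ and the negative edge is $uv$ itself, between two parts. This is the complete $r$-partite graph with one negative edge, with some choice of the two part sizes.
\end{itemize}
Comparing quotient-matrix characteristic polynomials, I expect (b) to be best when the negative edge joins a largest part and a second-largest part, which is $\Gamma_n(r)$. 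I also expect (a) with a $\lfloor n/r\rfloor$-part to beat every version of (b) whenever $s\ne0$. Intuitively, a negative edge inside a part does not destroy a positive cross edge, while in (b) a positive edge is replaced. This gives the claimed dichotomy.

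The main obstacle is Step 2. It needs a quantitative stability estimate for $K_{r+1}$-free graphs that are not Turán, sharp enough to beat the $\Theta(1/n)$ gap computed in Step 1. It also needs uniform lower bounds on $x_ux_v$. I would first try to derive the stability bound from the proof of Theorem \ref{Classical spectral Turan for s-g}. The fallback is a Zykov-symmetrization style argument on $G^+$ that keeps track of the eigenvector.
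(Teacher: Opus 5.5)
Your plan fails at Step 2, and it cannot be repaired, because the $s=0$ case of the statement is false. No Step 3 comparison can output $\Gamma_n(r)$ there.

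\textbf{Why Step 2 fails.} Your $\Theta(1/n)$ bookkeeping cannot separate the relevant configurations. For $s=0$ one has $\lambda_1(\Gamma_n(r))\approx L-4/n$. Two other configurations sit at the same level, $L-4/n+o(1/n)$: $T_r(n)$ with two negative edges inside parts, and $T_r(n)-e$ with a negative edge inside a part. Meanwhile $K_{q+1,q-1,q,\dots,q}$ with one negative edge inside the part of size $q+1$ sits at about $L-(4-2/r)/n$, which is higher. The positive part of this last graph is neither $T_r(n)$ nor $T_r(n)-e$. It even has larger index than $T_r(n)-e$; for example $\sqrt3>\tfrac{1+\sqrt5}{2}$ when $r=2$, $n=4$. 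So the stability bound you want to invoke, with a constant that beats Step 1, does not exist.

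\textbf{Explicit counterexamples.}
\begin{itemize}
\item For $r=2$, $n=4$: take the triangle $abd$ with a pendant vertex $c$ at $d$, and make $ab$ negative. This is $K_{3,1}$ plus a negative edge in the $3$-part. It is unbalanced, $\mathcal{K}_3^+$-free, and not switching equivalent to $T_2^\star(4)$ (different underlying graph). Its characteristic polynomial is $x^4-4x^2+2x+1=(x-1)(x^3+x^2-3x-1)$, which equals $2\sqrt2-3<0$ at $x=\sqrt2$. So its index, about $1.481$, exceeds $\sqrt2=\lambda_1(\Gamma_4(2))$, the index of the negative $4$-cycle.
\item For $r=2$, $n=6$: $K_{4,2}$ plus a negative edge in the $4$-part has index equal to the largest root of $f(t)=t^3+t^2-8t-4$. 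Since $f\bigl(\tfrac{1+\sqrt{17}}2\bigr)=3-\sqrt{17}<0$, it beats $\lambda_1(\Gamma_6(2))=\tfrac{1+\sqrt{17}}2$.
\item For $r=3$, $n=6$: the same construction gives about $3.545$, against $\lambda_1(\Gamma_6(3))\approx 3.505$.
\end{itemize}

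\textbf{A second flaw: deleting negative edges ``by maximality''.} The maximization runs only over graphs with $\Gamma\not\sim T_r^\star(n)$. Deleting a negative edge, rebalancing part sizes, or symmetrizing can land exactly on $T_r^\star(n)$. For instance, deleting one of two negative edges placed inside $\lceil n/r\rceil$-parts of $T_r(n)$ produces $T_r^\star(n)$.

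\textbf{The paper's proof fails at the same point.} It reuses Claims 1--8 from the proof of Theorem~\ref{Classical spectral Turan for s-g} for the restricted maximizer. But when $s=0$, the vertex-moving step in Case~2 of Claim~8 turns $K_{q+1,q-1,q,\dots,q}$ plus a negative edge in the $(q+1)$-part into $T_r^\star(n)$. So that competitor is never ruled out.

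\textbf{Problems in the $s\neq0$ case at small orders.} $T_r^\diamond(n)$ does not exist when $q=1$. For $(r,n)=(2,5)$, both $\Gamma_5(2)$ (spectrum $\pm2,\pm\sqrt2,0$) and $T_2^\diamond(5)$ (spectrum $2,1,0,0,-3$) have index $2$, so the equality characterization fails.

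Your intuition that an in-part negative edge is cheaper than a flipped cross edge is correct only asymptotically. A correct result here needs an exact second-order comparison of every configuration within $O(1/n)$ of $L-4/n$. For $s=0$, that comparison does not yield $\Gamma_n(r)$.
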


The rest of the paper is organised as follows. In Section $2$, we prove Theorem \ref{signed Turan theorem}. In Section $3$, we prove Theorems \ref{Classical spectral Turan for s-g} and  \ref{Classical spectral Turan for s-g-2}.  We also present an example to show that it is not true that if $\lambda_{1}(\Gamma)< \lambda_{1}(T_{r}^\star(n))$, then $S(\Gamma)< S(T_{r}^{\star}(n))$.




\section{Proof~of~Theorem~\ref{signed Turan theorem}}
Let $\Gamma=(G,\sigma)$ be a  signed graph with underlying graph $G=(V(G),E(G))$. Let $\emptyset\not=S\subseteq  E(G)$. Denote by $\Gamma-S$ the signed graph obtained from $\Gamma$ by deleting all signed edges of $\Gamma$ whose underlying edges are in $S$. Let $\epsilon(\Gamma)$ be the \emph{frustration index} of $\Gamma$, which is the
minimum number of edges to remove for balance. That is,
$$
\epsilon(\Gamma)=\min_{\Gamma^{\prime} \sim \Gamma} |E^-(\Gamma')|.
$$

\begin{Tproof}\textbf{~of~Theorem~\ref{signed Turan theorem}.}~Note that
	$$
	S(T_{r}^{\star}(n))=\max_{\Gamma^{\prime} \sim T_{r}^{\star}(n)} (|E^+(\Gamma')|-|E^-(\Gamma')|)=\max_{\Gamma^{\prime} \sim T_{r}^{\star}(n)}(e(T_{r}^{\star}(n))-2\epsilon(T_{r}^{\star}(n))).
	$$
	Since $T_{r}^{\star}(n)$ is unbalanced and $|E^-(T_{r}^{\star}(n))|)=1$, we have $\epsilon(T_{r}^{\star}(n)))=1$. Hence,
	\[
	S(T_{r}^{\star}(n))=e(T_{r}^{\star}(n))-2=e(T_r(n))-1.
	\]
	Fix a switching $\Gamma' \sim \Gamma$, and let $S$ be the set of underlying edges corresponding to $E^{-}(\Gamma')$. Then $\Gamma'-S$ is balanced with all positive edges. Since $\Gamma$ is $\mathcal{K}_{r+1}^+$-free, we obtain that $\Gamma'$ is $\mathcal{K}_{r+1}^+$-free and hence $\Gamma'-S$ is $\mathcal{K}_{r+1}^+$-free. By Theorem \ref{Turan-MFL-1941}, we have
	\[
	|E^+(\Gamma')|=e(\Gamma'-S)\le e(T_r(n)).
	\]
	Since $\Gamma$ is unbalanced, the switching-invariance argument also gives $|E^-(\Gamma')|\ge 1$. Therefore,
	\[
	|E^+(\Gamma')|-|E^-(\Gamma')|\le e(T_r(n))-1=S(T_{r}^{\star}(n)).
	\]
	This holds for every switching. Hence, $S(\Gamma)\le S(T_{r}^{\star}(n))$.
	
	Suppose now that $S(\Gamma) =S(T_{r}^{\star}(n))= e(T_r(n))-1$, and choose a switching $\Gamma'$ attaining the maximum. From $|E^+(\Gamma')|\leq e(T_r(n))$, $|E^-(\Gamma')| \geq 1$, and $|E^+(\Gamma')|-|E^-(\Gamma')| = e(T_r(n))-1$, it follows that
	\[
	|E^+(\Gamma')|=e(T_r(n)), ~~|E^-(\Gamma')| = 1.
	\]
	The equality case of Turan's theorem implies that the graph ($E^+(\Gamma')$, $V(\Gamma')$) is $T_r(n)$. Since all pairs in different parts are already positive edges, the unique negative edge must lie inside one part. Hence, $\Gamma'= T_{r}^{\star}(n)~\text{or}~T_{r}^\diamond(n)$.
	
	This completes the proof.\qed
\end{Tproof}

From the proof of Theorem \ref{signed Turan theorem}, we see that Theorem \ref{Turan-MFL-1941} implies Theorem \ref{signed Turan theorem}.
Indeed, Theorem \ref{signed Turan theorem} can also imply Theorem \ref{Turan-MFL-1941} (See Remark \ref{unsigned-signed Turan graph}). Before proceeding, we first present the following result.


\begin{prop}\label{switching equivalence class}
	Let $\Gamma$ be a signed graph such that $\Gamma \sim T_{r}^{\star}(n)$ $($respectively, $T_{r}^\diamond(n))$ and $|E^-(\Gamma)|=1$. Then, for every $n$ and $r\ge 2$, $\Gamma = T_{r}^{\star}(n)$ $($respectively, $T_{r}^\diamond(n))$.
\end{prop}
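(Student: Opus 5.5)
Write $\Gamma_0$ for $T_{r}^{\star}(n)$ (respectively, $T_{r}^\diamond(n)$), and read the conclusion $\Gamma=\Gamma_0$ as "equal up to isomorphism". Let $xy$ be the unique negative edge of $\Gamma_0$; it lies inside a part $V_1$ of $T_r(n)$. The plan is a direct cut-counting argument. Switching preserves the underlying graph. Also, a sequence of switchings at $U_1,\dots,U_k$ has the same effect as a single switching at their symmetric difference, where switching at $\emptyset$ or at $V$ changes nothing. So $\Gamma$ is obtained from $\Gamma_0$ by switching at a single set $U\subseteq V$.

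If $U=\emptyset$ or $U=V$, then $\Gamma=\Gamma_0$ and we are done. So assume $\emptyset\ne U\ne V$, and let $C$ be the set of edges of the underlying graph $G=T_r(n)+xy$ between $U$ and $V\setminus U$. After switching, the negative edges of $\Gamma$ are exactly the originally positive edges of $C$, together with $xy$ if $xy\notin C$. We have $r\ge 2$ and $|V_1|\ge 2$. If $T_r(n)$ has an empty part, then $n<r$ and the largest part has size $1$, which is impossible; so every part is nonempty. Hence $T_r(n)$, and therefore $G$, is connected, and so $C\neq\emptyset$.

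\emph{Case 1: $xy\notin C$.} Then $xy$ stays negative, and every edge of $C$ is a positive edge that becomes negative. This gives at least $2$ negative edges, contradicting $|E^-(\Gamma)|=1$.

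\emph{Case 2: $xy\in C$.} Without loss of generality $x\in U$ and $y\notin U$. Now $xy$ becomes positive, and all of $C\setminus\{xy\}$ becomes negative. So we need $|C\setminus\{xy\}|=1$. This is the key step. Every vertex $z\notin V_1$ is adjacent to both $x$ and $y$, and exactly one of $xz,yz$ lies in $C$. Hence $|C\setminus\{xy\}|\ge |V\setminus V_1|$, which forces $|V\setminus V_1|\le 1$. Since $r\ge 2$ and all parts are nonempty, we get $r=2$ and $|V\setminus V_1|=1$. Because part sizes differ by at most one and $|V_1|\ge2$, this gives $V_1=\{x,y\}$ and $n=3$.

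In this exceptional case $\Gamma_0=T_2^{\star}(3)$ is the triangle with exactly one negative edge, and $T_2^\diamond(3)$ does not arise (its floor part would have size $1$). Moreover $\Gamma$ is a triangle with exactly one negative edge, so $\Gamma\cong\Gamma_0$ via an automorphism of $K_3$. In all other cases Case 2 is impossible, so $U$ is trivial and $\Gamma=\Gamma_0$.

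The main obstacle is the bound on $|C|$ in Case 2 together with the small exceptional configuration, since that is the only place where "$=$" must be read as isomorphism rather than literal equality.
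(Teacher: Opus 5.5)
Your proof is correct and follows the paper's argument. Both reduce to a single switching set $U$, split on whether the negative edge $xy$ lies in the cut $\delta(U)$, and use the fact that every vertex outside $V_1$ contributes exactly one cut edge (to $x$ or to $y$), which forces $n=3$. You add some care the paper omits: you justify reducing to one switching set, treat $T_r^\diamond(n)$ in the same argument, and note that the $n=3$ conclusion holds only up to isomorphism.
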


\begin{proof}
	Let $H$ be the underlying graph of $T_{r}^{\star}(n)$ and $E^{-}(T_{r}^{\star}(n)) = \{e_0\}$ where $e_0=uv$. We assume $u,v\in V_1\subseteq V(H)$ and then $|V_1|=\lceil \frac{n}{r} \rceil$.
	Since $\Gamma \sim T_{r}^{\star}(n)$, there exists a switching set $U \subseteq V(H)$ such that $\Gamma$ is obtained from $T_{r}^{\star}(n)$ by reversing the signs of  the edges in the set
	\[
	\delta_H(U) = \{xy \in E(H) : |\{x, y\} \cap U| = 1\}.
	\]
	Hence, the set of negative edges after switching is
	\begin{equation}\label{switching equivalence-1}
		E^{-}(\Gamma) = \{e_0\} \triangle \delta_H(U),
	\end{equation}
	where $\triangle$ denotes symmetric difference. Since $|E^{-}(\Gamma)| = 1$, we have 	\begin{equation}\label{switching equivalence-2}
		1 = 1 + |\delta_H(U)| - 2\mathbf{1}_{\{e_0\in\delta_H(U)\}},
	\end{equation}
	where
	$$
	\mathbf{1}_{\{e_0\in\delta_H(U)\}}=\begin{cases}
		1, & \text{if~} e_0\in\delta_H(U),\\
		0,& \text{otherwise}.
	\end{cases}
	$$
	
	Suppose that $e_0 \notin \delta_H(U)$. Equation \eqref{switching equivalence-2} yields $|\delta_H(U)| = 0$. Since $H$ is connected, we have $U = \emptyset$ or $U = V(H)$. Every switching preserves the sign of each edge. Hence,
	\[
	\Gamma = T_{r}^{\star}(n).
	\]
	We now consider $e_0 \in \delta_H(U)$. Then, by \eqref{switching equivalence-2}, we obtain
	\begin{equation}\label{switching equivalence-3}
		|\delta_H(U)| = 2.
	\end{equation}
	Since $e_0 = uv \in \delta_H(U)$, without loss of generality, we assume
	\[
	u \in U,\ v \notin U.
	\]
	Clearly, every vertex $w \in V(H) \setminus V_1$ is adjacent to both $u$ and $v$. If $w \in U$, then $vw \in \delta_H(U)$; if $w \notin U$, then $uw \in \delta_H(U)$. Therefore,
	\begin{equation}\label{switching equivalence-4}
		|\delta_H(U)| \geq 1 + |V(H) \setminus V_1| = 1 + n - \left\lceil \frac{n}{r} \right\rceil.
	\end{equation}
	For $n \geq 4$ and $r \geq 2$,
	\[
	\left\lceil \frac{n}{r} \right\rceil \leq \left\lceil \frac{n}{2} \right\rceil \leq n - 2.
	\]
	Hence, \eqref{switching equivalence-4} implies $|\delta_H(U)| \geq 3$, contradicting \eqref{switching equivalence-3}.
	
	Finally, suppose that $n = 3$. Then $r=2$ and the underlying graph of
	$T_{2}^{\star}(3)$ is $K_3$. It is easy to check that
	\[
	\Gamma = T_{2}^{\star}(3).
	\]
	This completes the proof.\qed
\end{proof}

\begin{remark}\label{unsigned-signed Turan graph}
	{\em
		
		Suppose that $G$ is an $n$-vertex \(K_{r+1}\)-free graph with the maximum number of edges. Clearly, $G$ is connected; otherwise, we may add an edge between two connected components and get a \(K_{r+1}\)-free graph with a larger number of edges. This contradicts the maximality of the number of edges of $G$. If $G$ is a complete graph, then Theorem \ref{Turan-MFL-1941} holds trivially. If $G$ is not a complete graph, then there exists at least one vertex \(v\in V(G)\) such that \(G[N(v)]\) is not a complete graph. Thus, $|V(G[N(v)])|\ge 2$. Otherwise, since $G$ is connected, we have $|V(G[N(v)])|= 1$. Set $V(G[N(v)])=\left\{u\right\}$. We can add at least one edge between $v$ and $V\setminus  (V(G[N(u)])\cup \left\{u\right\}) $ to obtain a graph with more edges, a contradiction. In this point, we construct an unbalanced signed graph $\Gamma=(G+e_1,\sigma)$ such that $e_1\in \overline{G}[N(v)]$ and
		\[
		\sigma(e)=
		\begin{cases}
			1, & \text{~if~}e\in E(G),\\
			-1,& \text{~if~}e=e_1.
			
		\end{cases}
		\]
		Recall that $G$ is \(K_{r+1}\)-free. Then $\Gamma$ is $\mathcal{K}_{r+1}^+$-free.
		By Theorem \ref{signed Turan theorem}, we have
		\[
		S(\Gamma)\le S(T_{r}^{\star}(n)).
		\]
		Then
		\[
		e(G)-1=S(\Gamma)\le S(T_{r}^{\star}(n))=e(T_r(n))-1.
		\]
		This implies $e(G)\le e(T_r(n))$.
		
		If $e(G)= e(T_r(n))$, then $S(\Gamma)= S(T_{r}^{\star}(n))$. By Theorem \ref{signed Turan theorem}, we have $\Gamma\sim T_{r}^{\star}(n)$ or \(\Gamma \sim T_{r}^\diamond(n)\). Since $|E^-(\Gamma')| = 1$, by Proposition \ref{switching equivalence class}, we have $\Gamma=T_{r}^{\star}(n)$ or \(\Gamma = T_{r}^\diamond(n)\). For each case, we have $G\cong T_r(n)$.
	}
\end{remark}

\section{Proofs~of~Theorems~\ref{Classical spectral Turan for s-g}~and~\ref{Classical spectral Turan for s-g-2}}

\subsection{Preliminaries}

In this section, we present some necessary results for the proofs of Theorems \ref{Classical spectral Turan for s-g} and \ref{Classical spectral Turan for s-g-2}.
\begin{lemma}\emph{(See \cite[Lemma~2.5]{Sun-Liu-Lan-LAA-2022})}\label{Sun-Liu-Lan-LAA-2022}
	Let $\Gamma$ be a signed graph of order $n$. Then there exists a signed graph $\Gamma^{\prime}$ switching equivalent to $\Gamma$ for which the largest eigenvalue $\lambda_{1}(\Gamma^{\prime})$ admits a non-negative eigenvector.
\end{lemma}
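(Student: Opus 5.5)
The plan is to use the Perron--Frobenius structure after a suitable switching. Let $\lambda=\lambda_1(\Gamma)$, $x$ a unit eigenvector, and $D=\mathrm{diag}(s_1,\dots,s_n)$ with $s_i=\mathrm{sgn}(x_i)\in\{\pm1\}$, where we set $s_i=+1$ when $x_i=0$. Switching at $U=\{v_i: s_i=-1\}$ replaces $A(\Gamma)$ by $A(\Gamma')=DA(\Gamma)D$. This matrix has the same spectrum, and $y=Dx=(|x_1|,\dots,|x_n|)^{T}$ is an eigenvector for $\lambda$. Since $y\ge 0$, the statement follows with $\Gamma'$ the switching of $\Gamma$ at $U$ (or $\Gamma'=\Gamma$ if $U=\emptyset$). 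So the whole proof reduces to this diagonal sign-similarity.

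\textbf{Step 1.} Check that conjugation by $D$ is exactly switching at $U$. We have $(DAD)_{ij}=s_is_ja^{\sigma}_{ij}$. The sign $s_is_j$ equals $-1$ precisely when exactly one of $v_i,v_j$ lies in $U$. So $DAD$ is the adjacency matrix of the signed graph obtained by switching $\Gamma$ at $U$, which has the same underlying graph.

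\textbf{Step 2.} Since $D^{-1}=D$, we get $DAD\,(Dx)=DAx=\lambda Dx$. Hence $y=Dx$ is an eigenvector of $A(\Gamma')$ for $\lambda_1(\Gamma')=\lambda_1(\Gamma)$. By the choice of $s_i$, every entry $y_i=s_ix_i$ equals $|x_i|$, so $y\ge 0$.

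There is essentially no obstacle. The only points to watch are the zero entries of $x$, handled by the convention $s_i=+1$, and the degenerate cases $U=\emptyset$ or $U=V$, where the switching is trivial and $\Gamma'=\Gamma$.

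Note that nonnegativity does not use the fact that $\lambda$ is the largest eigenvalue; it holds for any eigenvalue. The statement is phrased for $\lambda_1$ because that is the case needed later.
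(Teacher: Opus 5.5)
Your argument is correct and complete. Conjugating $A(\Gamma)$ by the signature matrix $D=\mathrm{diag}(s_1,\dots,s_n)$ is exactly switching at the set $U$ of vertices with negative eigenvector entries, $Dx=(|x_1|,\dots,|x_n|)^T$ is a nonzero non-negative eigenvector for the same eigenvalue, and you handle the zero entries and the trivial cases $U=\emptyset$, $U=V$ properly. The paper does not reprove this lemma but cites it from Sun--Liu--Lan, and this signature-matrix switching is the standard argument for it, so your route is presumably the intended one.
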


The following two lemmas show that both the balance property and the eigenvalues of signed graphs are invariant under switching equivalence.
\begin{lemma}\emph{(See \cite[Proposition~3.2]{Zaslavsky-DAM-1982})}\label{Zaslavsky-DAM-1982}
	Two signed graphs on the same underlying graph are switching equivalent if and only if they have the same list of balanced cycles.
\end{lemma}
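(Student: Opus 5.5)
The plan is to prove the two directions of Lemma \ref{Zaslavsky-DAM-1982} separately, working throughout with the underlying graph $G$ and two signatures $\sigma,\sigma'$ on $E(G)$. The key identity is that a cycle $C$ is positive under $\sigma$ exactly when $\prod_{e\in C}\sigma(e)=+1$.

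For the forward direction, it suffices to treat a single switching at a vertex subset $U$, since switching equivalence is generated by such operations. Switching at $U$ multiplies $\sigma(e)$ by $-1$ precisely for the edges of the cut $\delta(U)$. A cycle $C$ enters and leaves $U$ the same number of times, so $|E(C)\cap\delta(U)|$ is even. Hence $\prod_{e\in C}\sigma(e)$ is unchanged, and the list of balanced (positive) cycles is preserved.

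For the converse, suppose $\sigma$ and $\sigma'$ give the same sign to every cycle, and set $\tau=\sigma\sigma'$ edgewise. Then every cycle has $\prod_{e\in C}\tau(e)=+1$, so $(G,\tau)$ is balanced. I will show that a balanced signature is a switching of the all-positive one. Work in each connected component separately: choose a spanning tree $T$ and a root $x_0$, and define $\theta(x)$ as the product of $\tau$ along the tree path from $x_0$ to $x$. Let $U=\{x:\theta(x)=-1\}$. Every tree edge $xy$ satisfies $\tau(xy)=\theta(x)\theta(y)$ by construction. For a non-tree edge $xy$, its fundamental cycle has sign $+1$, which gives $\tau(xy)=\theta(x)\theta(y)$ as well. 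Thus $\tau(xy)=-1$ exactly when $xy\in\delta(U)$, so switching $(G,\sigma')$ at $U$ multiplies $\sigma'$ by $\tau$ and yields $\sigma$. Taking the union over components gives one switching set, or at most finitely many switchings. This shows $(G,\sigma)\sim(G,\sigma')$.

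The one step needing care is the non-tree edge case: $\tau$ is multiplicative along the fundamental cycle, which consists of $xy$ together with the tree paths from $x_0$ to $x$ and from $x_0$ to $y$. The common initial segment of these two paths cancels, so the product reduces to $\tau(xy)\theta(x)\theta(y)$, which equals $+1$ by balance. The remaining edge cases are routine. If $U$ is empty or all of $V$, no switching is needed. Forests are handled trivially, since every tree edge already satisfies the required identity.
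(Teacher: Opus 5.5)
Your proof is correct and complete. The paper gives no proof of this lemma and simply cites Zaslavsky, so there is no argument of its own to compare against. Yours is the standard one: cut parity shows that a switching preserves the sign of every cycle. For the converse, the product signature $\sigma\sigma'$ is balanced, and a spanning-forest potential (in effect Harary's balance theorem) yields a set $U$ with $\delta(U)=\{e:\sigma(e)\neq\sigma'(e)\}$.

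The paper only ever uses the easy direction. It needs it to transfer unbalancedness and $\mathcal{K}^{+}_{r+1}$-freeness from $\Gamma$ to the switched graph $\Gamma'$.
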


\begin{lemma}\emph{(See \cite[Lemma~2.1]{Hou-Tang-Wang-AMC-2019})}
	\label{Hou-Tang-Wang-AMC-2019}
	Let $\Gamma_{1}=(G,\sigma_{1})$ and $\Gamma_{2}=(G,\sigma_{2})$ be two signed graphs on the same underlying graph $G$. The following are equivalent:
	
	$\mathrm{(1)}$~$\Gamma_{1}$ and $\Gamma_{2}$ are switching equivalent.
	
	$\mathrm{(2)}$~$A(\Gamma_{1})$ and $A(\Gamma_{2})$ are similar.
\end{lemma}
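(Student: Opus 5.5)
We read statement (2) as \emph{signature similarity}: $A(\Gamma_2)=D^{-1}A(\Gamma_1)D$ for some diagonal matrix $D=\mathrm{diag}(d_1,\dots,d_n)$ with every $d_i\in\{+1,-1\}$. This is the form in which the equivalence is used, and it is the reading under which it is true. With arbitrary similarity it fails. Take $G=2K_3$, and let $\Gamma_1$ have a negative edge only in the first triangle and $\Gamma_2$ only in the second. A permutation matrix swapping the two triangles conjugates $A(\Gamma_1)$ to $A(\Gamma_2)$. But the two signed graphs have different balanced triangles, so by Lemma \ref{Zaslavsky-DAM-1982} they are not switching equivalent. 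I would state this caveat and then prove the equivalence for signature similarity. Everything reduces to one identity, which I verify first: if $D$ is diagonal with entries $\pm1$, then $D^{-1}=D$ and
\[
(DA(\Gamma)D)_{ij}=d_i\,\sigma(v_iv_j)a_{ij}\,d_j .
\]

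\textbf{(1)$\Rightarrow$(2).} Since switching equivalence is generated by single switchings, it suffices to handle one switching at a set $U$. Put $d_i=-1$ for $v_i\in U$ and $d_i=+1$ otherwise. By the identity, $d_id_j=-1$ exactly when one endpoint lies in $U$ and the other does not. So $DA(\Gamma_1)D$ is the adjacency matrix of $\Gamma_1$ switched at $U$. A sequence of switchings corresponds to a product of such matrices $D$, which is again a diagonal $\pm1$ matrix. Hence $A(\Gamma_1)$ and $A(\Gamma_2)$ are (signature) similar.

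\textbf{(2)$\Rightarrow$(1).} Suppose $A(\Gamma_2)=DA(\Gamma_1)D$ with $D$ a signature matrix, and let $U=\{v_i: d_i=-1\}$. Comparing entries gives $\sigma_2(v_iv_j)=d_id_j\,\sigma_1(v_iv_j)$ for every edge. Thus $\sigma_2$ is obtained from $\sigma_1$ by reversing exactly the edges in $\delta_G(U)$, so $\Gamma_2$ is $\Gamma_1$ switched at $U$. The only care needed is the degenerate case $U=\emptyset$ or $U=V$, which the definition excludes as a switching. In that case $d_id_j=1$ for all $i,j$, so $\Gamma_2=\Gamma_1$ and the two graphs are trivially equivalent.

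There is also a cross-check through Lemma \ref{Zaslavsky-DAM-1982}. The sign of a cycle $C$ is $\prod_{e\in C}\sigma(e)$. Multiplying by $d_id_j$ on each edge changes this product by $\prod_{v\in C}d_v^2=1$, so $\Gamma_1$ and $\Gamma_2$ have the same balanced cycles and are switching equivalent by that lemma. The main obstacle is not computational. It is pinning down the correct hypothesis in (2): for general similarity only the spectral consequences survive, namely equal eigenvalues and the invariance of $\lambda_1$ used later, and not switching equivalence.
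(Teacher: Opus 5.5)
Your argument is correct, and it is the standard proof of this fact. The paper gives no proof of its own here; it only cites Hou--Tang--Wang. The cited argument is exactly the conjugation $A(\Gamma_2)=DA(\Gamma_1)D$ by a signature matrix $D$ that you carry out, in both directions.

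Your caveat about the hypothesis is also right and worth stating explicitly. Real symmetric matrices are similar if and only if they are cospectral. So condition (2), read literally, only says that $\Gamma_1$ and $\Gamma_2$ are cospectral. Your $2K_3$ example shows that (2)$\Rightarrow$(1) then fails: the adjacency matrices are permutation-similar, yet a triangle changes sign, so the graphs are not switching equivalent. The intended hypothesis is signature similarity, which is the reading you prove.

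The misstatement does no harm to the paper. The paper only uses (1)$\Rightarrow$(2), namely that switching preserves $\lambda_1$. It combines this with Lemma~\ref{Zaslavsky-DAM-1982} to get invariance of balance and of $\mathcal{K}^{+}_{r+1}$-freeness, and your argument covers that direction.

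One small inaccuracy: the paper's definition of switching excludes only $U=\emptyset$. Switching at $U=V$ is therefore permitted, and it is simply the identity. Your treatment of the degenerate case is fine either way.
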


\begin{lemma}\emph{(See \cite[Proposition~5]{Wang-Yan-Qian-LAA-2021})}\label{Wang-Yan-Qian-LAA-2021}
	Let $\Gamma$ be a signed graph of order $n$. Then
	\[\lambda_1(\Gamma)\le n\left(1-\frac{1}{\omega_{b}(\Gamma)}\right).\]
\end{lemma}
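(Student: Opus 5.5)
The plan is to reduce the signed statement to Wilf's bound for unsigned graphs, via the Motzkin--Straus inequality. Here $\omega_b(\Gamma)$ denotes the largest order of a balanced complete signed subgraph of $\Gamma$.

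\textbf{Step 1 (switch to a nonnegative eigenvector).} By Lemma~\ref{Sun-Liu-Lan-LAA-2022}, there is $\Gamma'\sim\Gamma$ such that $\lambda_1(\Gamma')$ has a nonnegative unit eigenvector $\mathbf{x}=(x_1,\dots,x_n)^{T}$. By Lemma~\ref{Hou-Tang-Wang-AMC-2019}, $\lambda_1(\Gamma')=\lambda_1(\Gamma)$. Switching maps the subgraph induced on any vertex set to a switching-equivalent subgraph, and switching preserves balance (Lemma~\ref{Zaslavsky-DAM-1982}). Hence $\omega_b(\Gamma')=\omega_b(\Gamma)$, and it suffices to prove the bound for $\Gamma'$.

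\textbf{Step 2 (discard the negative edges).} Let $G^+$ be the spanning unsigned graph whose edges are the positive edges $E^+(\Gamma')$. Since every $x_i\ge 0$, each negative edge contributes $-2x_ix_j\le 0$ to the Rayleigh quotient, so
\[
\lambda_1(\Gamma)=\mathbf{x}^{T}A(\Gamma')\mathbf{x}=\sum_{v_iv_j\in E(\Gamma')}2\sigma'(v_iv_j)x_ix_j\le \sum_{v_iv_j\in E^+(\Gamma')}2x_ix_j=\mathbf{x}^{T}A(G^+)\mathbf{x}.
\]
A clique of $G^+$ is an all-positive complete subgraph of $\Gamma'$, and all-positive graphs are balanced. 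Therefore $\omega(G^+)\le \omega_b(\Gamma')=\omega_b(\Gamma)$.

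\textbf{Step 3 (Motzkin--Straus and Cauchy--Schwarz).} Put $\omega=\omega(G^+)$. For any nonnegative vector $\mathbf{y}$, the Motzkin--Straus inequality gives $\mathbf{y}^{T}A(G^+)\mathbf{y}\le (1-\tfrac{1}{\omega})\big(\sum_i y_i\big)^2$. Applying this to $\mathbf{x}$ and using Cauchy--Schwarz with $\sum_i x_i^2=1$ yields
\[
\lambda_1(\Gamma)\le \Big(1-\frac{1}{\omega}\Big)\Big(\sum_i x_i\Big)^2\le \Big(1-\frac{1}{\omega}\Big)n\le n\Big(1-\frac{1}{\omega_b(\Gamma)}\Big).
\]
The last inequality uses only that $1-1/t$ is increasing in $t$. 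If $G^+$ has no edges, the chain still holds, since then the middle quantity is $0$ and $\omega=1$.

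The only genuinely nontrivial ingredient is the Motzkin--Straus inequality. If it is not available in citable form, I would reprove it by the standard weight-shifting argument. Take a maximiser of $\mathbf{y}^{T}A\mathbf{y}$ on the simplex whose support is minimal. If two support vertices are nonadjacent, shifting all weight from one to the other does not decrease the objective, which contradicts minimality. So the support is a clique of size $k\le\omega$, where the value is at most $1-1/k$.

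The other point needing care is checking that $\omega_b$ is switching-invariant. This is the only place where the signed structure enters the argument.
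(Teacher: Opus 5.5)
Your proof is correct. The paper gives no proof of this lemma: it is quoted from Wang--Yan--Qian and used only as a black box (in Claims 1 and 9), so there is no in-paper argument to compare yours with.

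Your route is the natural signed analogue of Wilf's proof of $\lambda_1(G)\le n(1-1/\omega(G))$. You switch so that $\lambda_1$ has a nonnegative eigenvector; Lemma~\ref{Sun-Liu-Lan-LAA-2022} is in fact immediate, by switching at the set of negative coordinates of any eigenvector. You then discard the negative edges to get $\lambda_1(\Gamma)\le \mathbf{x}^{T}A(G^+)\mathbf{x}$, note that $\omega(G^+)\le\omega_b(\Gamma')=\omega_b(\Gamma)$, and finish with Motzkin--Straus and Cauchy--Schwarz.

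The points that need checking are all handled:
\begin{itemize}
\item invariance of $\lambda_1$ and $\omega_b$ under switching;
\item the sign of the discarded terms, which is where nonnegativity of $\mathbf{x}$ is essential;
\item the edgeless case $\omega=1$.
\end{itemize}
Your weight-shifting sketch of Motzkin--Straus is the standard one.

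You could shorten Step~3 by citing Wilf's theorem via $\mathbf{x}^{T}A(G^+)\mathbf{x}\le\lambda_1(G^+)\le n(1-1/\omega(G^+))$. Keeping Motzkin--Straus instead gives the slightly stronger intermediate bound $\lambda_1(\Gamma)\le\bigl(1-1/\omega_b(\Gamma)\bigr)\bigl(\sum_i x_i\bigr)^2$.

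Finally, you use Lemma~\ref{Hou-Tang-Wang-AMC-2019} only in the direction ``switching equivalent $\Rightarrow$ similar'', which is the safe direction.
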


\begin{lemma}\emph{(See \cite[p.74]{Cvetkovic-Doob-Sachs-Book-1980})}\label{Cvetkovic-Doob-Sachs-Book-1980}
	Let $K_{n_1,n_2,\dots,n_r}$ be the complete $r$-partite graph whose $r$ color classes have sizes $n_1,n_2,\dots,n_r$, respectively. Then the largest eigenvalue $\lambda$ of $K_{n_1,n_2,\dots,n_r}$ satisfies
	\[	
	\sum_{i=1}^r \frac{n_i}{\lambda + n_i} = 1.
	\]
\end{lemma}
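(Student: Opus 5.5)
The statement to be proved is Lemma~\ref{Cvetkovic-Doob-Sachs-Book-1980}: for the complete multipartite graph $K_{n_1,\dots,n_r}$ with $r\ge 2$, the largest eigenvalue $\lambda$ satisfies $\sum_i n_i/(\lambda+n_i)=1$. The plan is to exploit the equitable partition given by the colour classes. By Perron--Frobenius, since $K_{n_1,\dots,n_r}$ is connected, $\lambda$ has a positive eigenvector $x$. Vertices in the same class $V_i$ have identical neighbourhoods, namely $V\setminus V_i$. Hence for $u,w\in V_i$ we get $\lambda x_u=\sum_{z\notin V_i}x_z=\lambda x_w$. Since $\lambda>0$, it follows that $x$ is constant on each class, say equal to $x_i$ on $V_i$.

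Next, write the eigen-equations in terms of the class values. Put $X=\sum_{j=1}^r n_j x_j>0$. A vertex in $V_i$ then satisfies
\[
\lambda x_i=\sum_{j\ne i} n_j x_j = X-n_i x_i ,
\]
so $(\lambda+n_i)x_i=X$, giving $x_i = X/(\lambda+n_i)$; note $\lambda+n_i>0$. Multiplying by $n_i$ and summing over $i$ yields
\[
X=\sum_{i=1}^r n_i x_i = X\sum_{i=1}^r \frac{n_i}{\lambda+n_i}.
\]
Dividing by $X>0$ gives the identity.

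The only delicate point is ensuring $X\neq 0$ and $\lambda+n_i\neq 0$ so that the divisions are legitimate. Both follow from the positivity of the Perron vector together with $\lambda>0$. Here $\lambda>0$ holds because the graph has at least one edge when $r\ge 2$.

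As an optional refinement, one can note that $f(t)=\sum_i n_i/(t+n_i)$ is strictly decreasing on $t>0$. Its limits are $f(0^+)=r>1$ and $f(\infty)=0$, so the equation has exactly one positive root, which is $\lambda$. This characterizes $\lambda$ as the unique positive solution, which is the form in which it will be used for comparing indices later.
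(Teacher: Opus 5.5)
Your proof is correct. The Perron vector is constant on each colour class, which gives $x_i=X/(\lambda+n_i)$ with $X>0$, and summing over the classes yields the identity; the case $r=1$ excluded by your assumption is trivial, since then $\lambda=0$.

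The paper does not prove this lemma and only cites it from Cvetkovi\'c, Doob and Sachs. Your argument is nevertheless the same computation the paper carries out for the signed analogue $\Theta$ in the proof of Lemma~\ref{eigenvalue of A(Sigma)}. Your use of Perron--Frobenius also supplies the nonvanishing of the total sum, which the paper's version takes for granted when it divides by $S$.
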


\begin{lemma}\emph{(See \cite[Theorem~1.2]{Cai-Zhou-ar-2026})}\label{Cai-Zhou-ar-2026}
	Let $\Gamma$ be an unbalanced $r$-partite signed graph of order $n$ with $r\ge 2$. Then
	\[
	\lambda_1(\Gamma)\le \lambda_1\big(\Gamma_n(r)\big),
	\]
	with equality if and only if $\Gamma\sim \Gamma_n(r)$.
\end{lemma}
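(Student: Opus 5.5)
The plan is a reduction to a one-parameter family by local moves that never decrease the index and preserve unbalance, followed by a direct comparison inside that family.

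\textbf{Step 1: make the underlying graph complete $r$-partite.} Let $\Gamma$ have the maximum index among unbalanced $r$-partite signed graphs of order $n$, with parts $V_1,\dots,V_r$. By Lemma~\ref{Hou-Tang-Wang-AMC-2019}, switching changes neither the index nor balancedness. By Lemma~\ref{Sun-Liu-Lan-LAA-2022}, we may therefore assume that $\lambda_1(\Gamma)$ has a unit eigenvector $x\ge 0$. Then
\[\lambda_1(\Gamma)=x^{T}A(\Gamma)x=\sum_{uv\in E}2\sigma(uv)x_ux_v.\]
Adding a positive edge between two different parts keeps the graph $r$-partite and keeps it unbalanced, since the old negative cycle survives. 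It also does not decrease $x^TAx$, so by the Rayleigh principle it does not decrease $\lambda_1$. Hence we may assume the underlying graph is $K_{n_1,\dots,n_r}$. Uniqueness up to switching needs strictness. For that I would show, from the eigen-equations of a connected complete multipartite graph, that the relevant entries of $x$ are positive. Then any strict increase of the quotient forces $\lambda_1$ to increase strictly.

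\textbf{Step 2: reduce to exactly one negative edge.} Suppose $E^-(\Gamma)$ has at least two edges. Keep one negative edge $e$ and make all other negative edges positive. The result still has a negative cycle: $K_{n_1,\dots,n_r}$ is $2$-connected for $n\ge3$, except for the star, where every signed graph is balanced. So $e$ lies on a cycle whose only negative edge is $e$. This change raises $x^TAx$ by $4\sum x_ux_v\ge 0$, strictly if the eigenvector entries are positive. Hence an extremal $\Gamma$ is switching equivalent to $K_{n_1,\dots,n_r}$ with exactly one negative edge $u_0v_0$, where $u_0\in V_i$ and $v_0\in V_j$.

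\textbf{Step 3: optimise the sizes and the position of the negative edge.} This is the main obstacle.

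\emph{Vertex entries.} By symmetry, $x$ is constant on each set $V_k\setminus\{u_0,v_0\}$. The eigen-equations then reduce to a small quotient matrix with at most $r+2$ classes. For an ordinary vertex $w\in V_k$, the equation is $\lambda x_w=\Sigma-\Sigma_k$, where $\Sigma=\sum_{v}x_v$ and $\Sigma_k=\sum_{v\in V_k}x_v$. Thus parts with smaller vertex sums have larger entries.

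\emph{Balancing the part sizes.} Suppose $|V_a|\ge |V_b|+2$. Move an ordinary vertex $w$, not $u_0$ or $v_0$, from $V_a$ to $V_b$, keeping all its edges positive. The quotient changes by $2x_w\big(\Sigma_b-(\Sigma_a-x_w)\big)$. I would show this is nonnegative by comparing $\Sigma_a$ and $\Sigma_b$ through the quotient equations. The point is that the negative edge only lowers $x_{u_0}$ and $x_{v_0}$, which slightly perturbs the comparison. The case where $u_0$ or $v_0$ lies in $V_a$ or $V_b$ has to be checked separately.

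\emph{Placing the negative edge.} Once the sizes are balanced, it remains to compare positions of $u_0v_0$. The negative edge costs $4x_{u_0}x_{v_0}$. Since entries are larger in smaller parts, this cost is minimised by putting the endpoints in the two largest parts, $n_1$ and $n_2$; that configuration is exactly $\Gamma_n(r)$. To make this rigorous, I would compare the characteristic equations of the few candidate quotient matrices, in the style of Lemma~\ref{Cvetkovic-Doob-Sachs-Book-1980}. Equivalently, I would move one endpoint of the negative edge and use the eigenvector of one configuration as a test vector for the other.
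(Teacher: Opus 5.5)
\textbf{There is a genuine gap in Step 3.} The paper does not prove this statement; it imports it from \cite{Cai-Zhou-ar-2026}, so your sketch has to stand on its own. Steps 1 and 2 are the standard reductions. Step 3, which you rightly call the main obstacle, fails as proposed.

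\textbf{Balancing the part sizes.} Your formula for moving an ordinary vertex $w$ from $V_a$ to $V_b$ has the wrong sign. The vertex $w$ gains the neighbours $V_a\setminus\{w\}$ and loses $V_b$, so the quotient changes by $2x_w(\Sigma_a-x_w-\Sigma_b)$. For parts containing neither $u_0$ nor $v_0$ one has $x=\Sigma/(\lambda+n_k)$ on each part, and this quantity is strictly positive. So the inequality you set out to prove is false there.

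Even with the sign corrected, the move can strictly lower the quotient when an endpoint of the negative edge lies in the larger part, and the effect is not a small perturbation. Take $r=2$ and $K_{4,2}$ with one negative edge $u_0v_0$, $u_0$ in the part of size $4$.
\begin{itemize}
\item Its index is $\sqrt6$.
\item Its non-negative eigenvector has $x_{u_0}=0$, entry $\alpha$ on the three ordinary vertices of the big part, and $\sqrt6\,\alpha/2$ on both vertices of the small part.
\item Your move changes the quotient by $2\alpha^2(2-\sqrt6)<0$.
\item Yet $K_{3,3}$ with one negative edge has the larger index $\sqrt{(9+\sqrt{17})/2}$.
\end{itemize}
The same example shows that positivity of eigenvector entries does not follow from the eigen-equations of a complete multipartite signed graph, contrary to your plan in Step 1. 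Positivity has to come from extremality, for instance via a bound like Claim~\ref{eigenvector of one zero coordinate} in the proof of Theorem~\ref{Classical spectral Turan for s-g}.

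\textbf{Placing the negative edge.} This step has the same defect. Your heuristic that the cost $4x_{u_0}x_{v_0}$ is least with the endpoints in the largest parts is only first-order. The rigorous version you propose, using one configuration's eigenvector as a test vector for another, fails: the negative edge depresses the entries at its own endpoints below the ordinary entries of larger parts. For $r=3$, $n=7$ (parts $3,2,2$), put the negative edge between the two parts of size $2$.
\begin{itemize}
\item The index is the largest root $t\approx 4.113$ of $t^3-14t-12$.
\item The endpoints have entry $1$, and the size-$3$ part has entry $4(t+1)/t^2\approx 1.21$.
\item Moving an endpoint into the size-$3$ part changes the quotient by $4(1-1.21)<0$.
\item Yet the resulting $\Gamma_7(3)$ has the larger index, about $4.19$.
\end{itemize}

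\textbf{What is left.} You need a genuine comparison of characteristic equations. Because the negative edge joins two different parts, $x_{u_0}$ and $x_{v_0}$ are coupled through a $2\times 2$ system. This is not the one-variable equation of Lemma~\ref{Cvetkovic-Doob-Sachs-Book-1980}, nor the paper's $\Phi_k$, which handles an edge inside a part. For $r=2$ it is easy: $\lambda_1^2$ is the larger root of $\mu^2-ab\mu+4(a-1)(b-1)=0$, and one checks this increases with $ab$ over the relevant range. For general $r$, this secular-equation analysis is exactly the content of the cited theorem, and your proposal leaves it undone. It has to cover both the part sizes and the position of the negative edge, including the configurations where your local moves fail.
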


\noindent\textbf{Notation}: Throughout the rest paper, we fix integers \(r\ge 2\), and \(n,q,s\) satisfying the division decomposition
\(n=rq+s,~0\le s<r.\) For brevity, we introduce the shorthand notation
\[
L=\left(1-\frac{1}{r}\right)n,
\]
which will be used consistently.

Suppose that the Tur{\'a}n graph $T_r(n)$ has vertex partition $V_1,V_2,\dots,V_r$ with $|V_i|=n_i$. Let $\Theta$ denote the signed graph obtained by adding a negative edge $uv$ inside the partition class $V_j$, where $|V_j|=k$, to $T_{r}(n)$. Let
$$
h_k(t)=\frac{k}{t+k},\text{~~and~~}g_k(t)=\frac{k - \dfrac{2}{t+1}}{t + k - \dfrac{2}{t+1}}.
$$
Define
$$\Phi_T(t)=\sum_{i=1}^r \frac{n_i}{t + n_i},$$
and
\[
\Phi_k(t) = \Phi_T(t) - h_k(t) + g_k(t).
\]

\begin{lemma}\label{eigenvalue of T_{r}(n)}
	$\lambda_1(T_{r}(n))$ is the unique solution of the equation
	\[
	\Phi_T(t)=1
	\]
	in the region $t>0$. Moreover, $\lambda_1(T_{r}(n))\le L$ with equality holding if and only if $s=0$.
\end{lemma}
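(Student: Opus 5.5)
The plan has two parts: characterize $\lambda_1(T_r(n))$ as a root of $\Phi_T$, then compare that root with $L$.

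\textbf{Characterizing the root.} By Lemma \ref{Cvetkovic-Doob-Sachs-Book-1980}, applied with $n_1,\dots,n_r$ the part sizes of $T_r(n)$, the number $\lambda_1(T_r(n))$ satisfies $\Phi_T(t)=1$. For uniqueness on $t>0$ we use monotonicity. Each term $n_i/(t+n_i)$ is strictly decreasing in $t$ on $(0,\infty)$. Moreover $\Phi_T(t)\to r>1$ as $t\to 0^+$, and $\Phi_T(t)\to 0$ as $t\to\infty$. Hence $\Phi_T$ is a continuous, strictly decreasing function crossing the level $1$ exactly once on $t>0$. Since $\lambda_1(T_r(n))>0$ (the graph has edges because $n\ge r+1\ge 3$), this crossing is $\lambda_1(T_r(n))$.

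\textbf{Comparing with $L$.} Because $\Phi_T$ is strictly decreasing, $\lambda_1(T_r(n))\le L$ is equivalent to $\Phi_T(L)\le 1$, and equality of the eigenvalue with $L$ is equivalent to $\Phi_T(L)=1$. The function $f(x)=x/(L+x)$ is strictly concave for $x>0$. Jensen's inequality therefore gives
\[
\Phi_T(L)=\sum_{i=1}^r f(n_i)\le r f\!\left(\frac{n}{r}\right)=\frac{r\cdot n/r}{(1-\frac1r)n+\frac nr}=1,
\]
with equality if and only if all $n_i$ are equal. The part sizes of $T_r(n)$ are all equal exactly when $s=0$, which gives the equality characterization. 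When $s=0$ we can also check directly that $T_r(n)$ is $(n-q)$-regular, so its index is $n-q=L$.

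The only delicate point is the strictness of the monotonicity and concavity, which is what makes the root unique and the equality case sharp. Both are immediate: each term of $\Phi_T$ has derivative $-n_i/(t+n_i)^2<0$, and $f''(x)=-2L/(L+x)^3<0$.
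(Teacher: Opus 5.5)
Your proof is correct, but it takes a different route from the paper's. The paper specializes Lemma~\ref{Cvetkovic-Doob-Sachs-Book-1980} to the two part sizes $q$ and $q+1$. It then clears denominators to obtain the quadratic $f(t)=t^2-(n-2q-1)t-(r-1)q(q+1)$. Since the constant term is negative, $f$ has exactly one positive root, which gives an explicit closed form for $\lambda_1(T_r(n))$. Finally it evaluates $f(L)=\frac{(r-1)s(r-s)}{r^2}\ge 0$, which vanishes exactly when $s=0$.

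You instead get uniqueness from the strict monotonicity of $\Phi_T$, which is the same device the paper later uses for $\Phi_k$ and $\Phi_+$. You then reduce $\lambda_1(T_r(n))\le L$ to $\Phi_T(L)\le 1$ and obtain that from Jensen's inequality for the strictly concave map $x\mapsto x/(L+x)$.

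The paper's computation buys a closed formula for the index and the exact size of the defect $f(L)$. Your argument avoids all algebra and in fact proves more: for every complete $r$-partite graph on $n$ vertices, $\lambda_1\le(1-\frac1r)n$, with equality if and only if all parts have equal size. So the equality case $s=0$ comes out of concavity rather than out of a computation tied to the specific sizes $q$ and $q+1$. The only extra hypothesis you invoke, that all parts are nonempty so that $\Phi_T(0^+)=r>1$, is supplied by the standing assumption $n\ge r+1$.
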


\begin{proof}
	For $T_{r}(n)$, by Lemma \ref{Cvetkovic-Doob-Sachs-Book-1980}, we have
	\[
	\frac{(r-s)q}{\lambda_1(T_{r}(n))+q} + \frac{s(q+1)}{\lambda_1(T_{r}(n))+q+1}=1.
	\]
	Hence, $\lambda_1(T_{r}(n))$ is the unique solution of the equation
	\[
	t^2-(n-2q-1)t-(r-1)q(q+1)=0
	\]
	in the region $t>0$. Clearly,
	\[
	\lambda_1(T_{r}(n))=\frac{n - 2q - 1 + \sqrt{(n - 2q - 1)^2 + 4(r - 1)q(q + 1)}}{2}.
	\]
	Let $f(t)=t^2-(n-2q-1)t-(r-1)q(q+1)$. We have
	\[
	f(L)=\frac{(r-1)s(r-s)}{r^2}\ge 0.
	\]
	Hence, $\lambda_1(T_{r}(n))\le L$, with equality if and only if \(s=0\).\qed
\end{proof}


\begin{lemma}\label{eigenvalue of A(Sigma)}
	For any $t>1$, $t$ is an eigenvalue of $A(\Theta)$ if and only if
	\[
	\Phi_k(t) = 1.
	\]
	Moreover, $\Phi_k$ is strictly decreasing on $t>1$, and $\lim_{t\to\infty}\Phi_k(t) = 0$.
\end{lemma}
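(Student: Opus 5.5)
We derive the characteristic equation directly from the eigen-equations of $A(\Theta)$ with $t$ as an eigenvalue, using the symmetry of $\Theta$. Label the parts $V_1,\dots,V_r$ with $V_j\ni u,v$ and $|V_j|=k$. The automorphism group of the underlying signed graph acts on the vertices; in particular it permutes each $V_i$ ($i\neq j$) and $V_j\setminus\{u,v\}$, and it can swap $u,v$.

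\emph{Reduction to constant-on-cells eigenvectors.} Let $t>1$ and let $x$ be a $t$-eigenvector. Vertices in the same part $V_i$ ($i\ne j$), and vertices in $V_j\setminus\{u,v\}$, are twins: they have identical signed neighbourhoods and are pairwise nonadjacent. Subtracting the eigen-equations of two twins $w,w'$ gives $t(x_w-x_{w'})=0$, so $x_w=x_{w'}$ since $t\neq 0$. For $u,v$ we have
\[
tx_u=\Sigma-x_v,\qquad tx_v=\Sigma-x_u,
\]
where $\Sigma$ is the sum of the entries of $x$ outside $V_j$. Subtracting gives $(t-1)(x_u-x_v)=0$, hence $x_u=x_v$ because $t>1$. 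This is exactly where the hypothesis $t>1$ enters: $t=-1$ and $t=1$ arise from antisymmetric vectors.

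\emph{Deriving the equation.} Write $a_i$ for the common value on $V_i$ ($i\ne j$), $b$ for the value on $V_j\setminus\{u,v\}$, and $c$ for $x_u=x_v$. Let $S$ be the weighted sum $\sum_i n_i a_i+(k-2)b+2c$. The eigen-equations become
\[
(t+n_i)a_i=S\ (i\neq j),\qquad (t+k)b=S,\qquad (t+k-2)c+(1+\ldots)=\ldots
\]
More carefully, $tx_u=S-(k-2)b-2c-c$, that is, $(t+1)c=S-(k-2)b-2c$. Solving gives $b=S/(t+k)$, and then $c$ is an explicit multiple of $S$. If $S=0$ then every entry vanishes, since $t+n_i>0$, $t+k>0$ and $t+3>0$. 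So we may normalize $S=1$, and consistency of the definition of $S$ gives
\[
\sum_{i\ne j}\frac{n_i}{t+n_i}+\frac{k-2}{t+k}+2c(t)=1 .
\]
The remaining step is to simplify $\frac{k-2}{t+k}+2c(t)$ to $g_k(t)$. A cleaner route is to eliminate $b$ and $c$ jointly by treating the whole of $V_j$ as a block: summing the $V_j$ equations gives $tB=kS'-2c\cdot$ (edge term), where $B$ is the block sum. Concretely, from $(t+1)c=1-(k-2)b-2c$ and $b=1/(t+k)$, the $V_j$ contribution reduces to $\frac{k-2/(t+1)}{t+k-2/(t+1)}$; this is a routine but error-prone rational computation, and it is the main step to double-check.

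\emph{Converse and monotonicity.} Conversely, if $\Phi_k(t)=1$, defining $a_i,b,c$ by these formulas with $S=1$ produces a nonzero eigenvector, so $t$ is an eigenvalue. For monotonicity, each $h_{n_i}(t)=n_i/(t+n_i)$ is strictly decreasing. Writing $m(t)=k-\frac{2}{t+1}$, we have $g_k(t)=m/(t+m)$, and $m(t)\in(k-1,k)$ for $t>1$, so $m>0$. Then
\[
g_k'(t)=\frac{m't-m}{(t+m)^2},\qquad m'(t)=\frac{2}{(t+1)^2},
\]
so we need $m'(t)\,t<m(t)$, which follows from $\frac{2t}{(t+1)^2}<\frac12\le k-1<m$ (note $k\ge 2$). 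Hence $\Phi_k$ is strictly decreasing on $t>1$. Finally, every term tends to $0$ as $t\to\infty$, since $m$ stays bounded, so $\lim_{t\to\infty}\Phi_k(t)=0$.
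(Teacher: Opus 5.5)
\textbf{There is a genuine error in the core computation.} A vertex $w\in V_j\setminus\{u,v\}$ is adjacent precisely to the vertices outside $V_j$. Its eigen-equation is therefore $tb=\Sigma=S-(k-2)b-2c$, not $(t+k)b=S$. The latter tacitly assumes $c=b$, which is false: the correct equations force $c=\frac{t}{t+1}b$.

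Combining your $b=S/(t+k)$ with your (correct) relation $(t+3)c=S-(k-2)b$, the $V_j$ contribution becomes
\[
\frac{k-2}{t+k}+\frac{2(t+2)}{(t+k)(t+3)}=\frac{kt+3k-2}{(t+k)(t+3)}.
\]
Compare this with $g_k(t)=\frac{kt+k-2}{(t+k)(t+1)-2}$. Cross-multiplying gives
\[
(kt+3k-2)\bigl((t+k)(t+1)-2\bigr)-(kt+k-2)(t+k)(t+3)=-2(k-2)(t+1),
\]
so the two expressions agree only for $k=2$, i.e.\ when there are no $b$-vertices at all. The simplification you deferred as ``routine but error-prone'' is therefore false for $k\ge 3$. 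For the same reason, your converse step, which builds an eigenvector ``by these formulas'', would not produce an eigenvector either. As a minor slip, the antisymmetric vector on $\{u,v\}$ has eigenvalue $1$, not $-1$.

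\textbf{The missing relation is the one the paper uses.} The vertex $u$ and any $l\in V_j\setminus\{u,v\}$ have the same neighbours outside $V_j$. Hence $tx_l=\Sigma$ and $tx_u=\Sigma-x_v=\Sigma-x_u$, which gives $x_u=\frac{t}{t+1}x_l$. Then
\[
S=\Sigma+(k-2)x_l+2x_u=\bigl(t+k-\tfrac{2}{t+1}\bigr)x_l,
\]
and the block sum $(k-2)x_l+2x_u=\bigl(k-\frac{2}{t+1}\bigr)x_l$ equals $g_k(t)S$. Together with $x_i=S/(t+n_i)$ on the other parts, this yields $\Phi_k(t)=1$. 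For $k=2$, use $\Sigma=(t+1)x_u$ directly.

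With this correction the rest of your outline is sound, and in places more careful than the paper:
\begin{itemize}
\item Your twin argument and the identity $(t-1)(x_u-x_v)=0$ make the paper's ``by symmetry'' rigorous.
\item Your remark that $S=0$ forces $\mathbf{x}=0$ justifies dividing by $S$, which the paper does silently.
\item The converse works with $x_l=1/\bigl(t+k-\frac{2}{t+1}\bigr)$, $x_u=\frac{t}{t+1}x_l$ and $x_i=1/(t+n_i)$.
\item Your monotonicity bound $m'(t)\,t<\frac12\le k-1<m(t)$ is equivalent to the paper's $\frac{4t+2}{(t+1)^2}<2\le k$.
\end{itemize}
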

\begin{proof}
	If $t$ is an eigenvalue of $A(\Theta)$, let $\mathbf{x}$ be an eigenvector corresponding to the eigenvalue $t>1$. By symmetry, $x_u=x_v$. For any $x_l\in V_j \setminus \left\{u,v\right\}$, we have
	\[
	tx_l=\sum_{w\in V(\Sigma^{\pm})\setminus V_j}x_w,~~tx_u=-x_v+\sum_{w\in V(\Sigma^{\pm})\setminus V_j}x_w.
	\]
	Hence, $x_u=\frac{t}{t+1}x_l$. Let $S=\sum_{v\in V(\Sigma^{\pm})}x_v$. We have
	\[
	S=\sum_{w\in V(\Sigma^{\pm})\setminus V_j}x_w+2x_u+(k-2)x_l=(t+k-\frac{2}{t+1})x_l.
	\]
	For any partition $V_i\ne V_j$ and $x_i\in V_i$, we have $tx_i=S-n_ix_i$ and then
	$x_i=\frac{S}{t+n_i}$.
	By direct calculation,
	\[
	2x_u+(k-2)x_l=(k-\frac{2}{t+1})x_l=Sg_k(t).
	\]
	Then,
	\[
	\sum_{i\ne j}S\frac{n_i}{t+n_i}+Sg_k(t)=S.
	\]
	By eliminating $S$, we obtain
	\[
	\Phi_k(t) = 1.
	\]
	
	Conversely, suppose that $t>1$ satisfies $\Phi_m(t)=1$. Set $S=1$, and define
	\[
	x_l = \frac{1}{t + m - \displaystyle\frac{2}{t+1}},\quad
	x_u = \frac{t}{t+1}x_l,\quad
	x_i = \frac{1}{t + n_i} \quad (V_i \neq V_j).
	\]
	The equation $\Phi_k(t)=1$ guarantees that the sum of these coordinates equals $1$. Substituting them pointwise back into the above characteristic equation yields a nonzero eigenvector, so $t$ is an eigenvalue.
	
	Finally, $h_k'(t) < 0$. By simple calculation, we have
	\[
	g_k'(t) = \frac{\dfrac{4t+2}{(t+1)^2} - k}{\left(t + k - \dfrac{2}{t+1}\right)^2} < 0
	\qquad (t>1,\ k\ge 2),
	\]
	since $k\ge 2$ and
	\[
	2 - \frac{4t+2}{(t+1)^2} = \frac{2t^2}{(t+1)^2} > 0.
	\]
	Thus $\Phi_k$ is strictly decreasing and clearly tends to $0$. \qed
\end{proof}

Define
\[
\Phi_+(t) =
\begin{cases}
	\displaystyle g_q(t) + \frac{(r-1)q}{t+q}, & s=0,\\[0.4cm]
	\displaystyle \frac{(r-s)q}{t+q} + \frac{(s-1)(q+1)}{t+q+1} + g_{q+1}(t), & 1\le s < r.
\end{cases}
\]

\begin{lemma}\label{eigenvalue of T_{r}^{+}(n)}
	If $(r,q,s)=(2,1,1)$, then $\lambda_1(T_{r}^\star(n))=1$. Otherwise, $\lambda_1(T_{r}^\star(n))$ is the unique solution of the equation
	\[
	\Phi_+(t)=1
	\]
	in the region $t>1$. Equivalently, for all admissible cases, $\lambda_1(T_{r}^\star(n))$ is the unique solution of $\Phi_+(t)=1$ with $t\geq 1$.
\end{lemma}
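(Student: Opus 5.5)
Throughout, note that $T_{r}^\star(n)$ is exactly the signed graph $\Theta$ of the preceding notation with $k=\lceil n/r\rceil$. This means $k=q$ if $s=0$ and $k=q+1$ if $1\le s<r$. The partition class containing the negative edge must have at least two vertices, so $k\ge 2$ (this is implicit in the definition of $T_{r}^\star(n)$). A direct comparison of the definitions then gives $\Phi_k=\Phi_+$: in $\Phi_T$ the term $h_k$ belonging to the class $V_j$ of size $k$ is replaced by $g_k$, and the remaining classes contribute $(r-1)q/(t+q)$ when $s=0$, and $(r-s)q/(t+q)+(s-1)(q+1)/(t+q+1)$ when $s\ge 1$. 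So the plan is to reduce everything to Lemma~\ref{eigenvalue of A(Sigma)}.

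\emph{Uniqueness.} By Lemma~\ref{eigenvalue of A(Sigma)}, a number $t>1$ is an eigenvalue of $A(T_{r}^\star(n))$ if and only if $\Phi_+(t)=1$. Moreover, $\Phi_+$ is strictly decreasing on $t>1$. Hence $\Phi_+(t)=1$ has at most one solution with $t>1$. In fact the derivative computation in that lemma also works at $t=1$, since $2t^2/(t+1)^2>0$ there. So $\Phi_+$ is strictly decreasing on $[1,\infty)$, and there is at most one solution with $t\ge 1$. It therefore suffices to show that $\lambda_1(T_{r}^\star(n))>1$ outside the exceptional case, and $=1$ in the exceptional case.

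\emph{Showing $\lambda_1>1$.} I will use Cauchy interlacing: $\lambda_1$ of a signed graph is at least $\lambda_1$ of any induced signed subgraph. If $r\ge 3$, three vertices from three different classes induce a positive $K_3$, which has index $2$. Now let $r=2$. The negative edge lies in the larger class $V_1$, with $|V_1|=\lceil n/2\rceil$.
\begin{itemize}
\item If $n\ge 4$, then $|V_2|\ge 2$. Any vertex $w\in V_1$ together with two vertices of $V_2$ induces an all-positive $K_{1,2}$, whose index is $\sqrt2>1$.
\item If $n=3$, then $(r,q,s)=(2,1,1)$. The graph $T_{2}^\star(3)$ is a triangle with exactly one negative edge. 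Its spectrum is $\{1,1,-2\}$ (switch so that all edges are negative, i.e. $A=-(J-I)$). Hence $\lambda_1=1$.
\end{itemize}
In the non-exceptional cases, $\lambda_1(T_{r}^\star(n))>1$ is an eigenvalue, so it satisfies $\Phi_+(\lambda_1)=1$. By uniqueness it is the only solution with $t>1$, and also the only one with $t\ge 1$.

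\emph{Exceptional case.} For $(r,q,s)=(2,1,1)$, compute directly:
\[
\Phi_+(1)=\frac{1}{1+1}+0+g_2(1)=\frac12+\frac{2-1}{1+2-1}=1.
\]
Since $\Phi_+$ is strictly decreasing on $[1,\infty)$, $t=1=\lambda_1$ is the unique solution with $t\ge1$. This gives the unified statement.

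The main point needing care is the existence step, namely guaranteeing $\lambda_1>1$ so that the characterization of Lemma~\ref{eigenvalue of A(Sigma)} applies. This is handled by the small case analysis via interlacing above. Everything else is a direct identification of $\Phi_+$ with $\Phi_k$.
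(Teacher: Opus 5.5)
Your proof is correct, but your existence step differs from the paper's. The paper never looks inside the graph. It evaluates $\Phi_+(1)$ case by case ($s=0$ with $q\ge 2$; $s=1$; $s\ge 2$) and finds $\Phi_+(1)>1$ in every case except $(r,q,s)=(2,1,1)$, where $\Phi_+(1)=1$. Strict monotonicity together with $\Phi_+(t)\to 0$ then gives a root $t_0>1$, which is an eigenvalue by the ``if'' direction of Lemma~\ref{eigenvalue of A(Sigma)}. The ``only if'' direction then forces every eigenvalue above $1$, in particular $\lambda_1$, to equal $t_0$. You instead show $\lambda_1>1$ directly, by interlacing with an induced all-positive $K_3$ (when $r\ge 3$) or $K_{1,2}$ (when $r=2$, $n\ge 4$), and then you need only the ``only if'' direction. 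Your route avoids the arithmetic on $\Phi_+(1)$ and one direction of the characterization lemma. It also settles two points the paper leaves implicit. First, you actually show that $1$ is the top eigenvalue of $T_2^\star(3)$, whereas the paper only shows that no eigenvalue exceeds $1$. Second, your remark that $\Phi_+$ is strictly decreasing on $[1,\infty)$ cleanly justifies the ``equivalently, $t\ge 1$'' formulation. In exchange, the paper's argument stays entirely at the level of the secular function, and it identifies the exceptional case intrinsically as the unique parameter choice with $\Phi_+(1)=1$.
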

\begin{proof}
	We apply Lemma \ref{eigenvalue of A(Sigma)} to the largest partition of $\Theta$. If $s=0$, then $k=q$, and the remaining $r-1$ partitions are all of size $q$. If $s>0$, then $k=q+1$, and the other partitions consist of $r-s$ copies of $q$ and $s-1$ copies of $q+1$. Thus Lemma \ref{eigenvalue of A(Sigma)} exactly yields the aforementioned function $\Phi_+$.
	
	When $s=0$ and $q\ge 2$,
	\[
	\Phi_+(1) = \frac{q-1}{q} + \frac{(r-1)q}{q+1} > 1.
	\]
	
	When $s>0$ and $q\ge 1$,
	\[
	\Phi_+(1) = \frac{(r-s+1)q}{q+1} + \frac{(s-1)(q+1)}{q+2}.
	\]
	If $s=1$, this expression equals $\dfrac{rq}{q+1}$, which equals $1$ only for $(r,q)=(2,1)$ and is greater than $1$ otherwise. If $s\ge 2$, then
	\[
	\Phi_+(1) \ge \frac{q}{q+1} + \frac{q+1}{q+2} > 1.
	\]
	By Lemma \ref{eigenvalue of A(Sigma)}, $\Phi_+$ is strictly decreasing on $t>1$.
	Then, the equation $\Phi_+(t)=1$ has a unique solution with $t>1$ except for the exceptional case $(r,q,s)=(2,1,1)$. In this exceptional case, $\Phi_+(1)=1$ and there is no solution for $t>1$. Every eigenvalue greater than $1$ is given by the root of this equation, this completes the proof.\qed
\end{proof}

Define
\[
\Phi_-(t) =\frac{(r-s-1)q}{t+q} + \frac{s(q+1)}{t+q+1} + g_{q}(t), ~1\le s < r.
\]
When $s=0$, $\lambda_1(T_{r}^\star(n))=\lambda_1(T_{r}^\diamond(n))$. Similar to the proof of Lemma \ref{eigenvalue of T_{r}^{+}(n)}, we obtain the following result immediately.
\begin{cor}\label{eigenvalue T_{r}^{-}(n)}
	If $1\le s<r$ and $q\ge 2$, then $\lambda_1(T_{r}^\diamond(n))$ is the unique solution of the equation
	\[
	\Phi_-(t)=1
	\]
	in the region $t>1$.
\end{cor}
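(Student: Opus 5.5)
The plan is to apply Lemma \ref{eigenvalue of A(Sigma)} to $\Theta=T_{r}^\diamond(n)$, in the same way Lemma \ref{eigenvalue of T_{r}^{+}(n)} was derived, with the negative edge now placed in a class of size $k=q$. Since $1\le s<r$, the classes of $T_r(n)$ are $r-s$ classes of size $q$ and $s$ classes of size $q+1$. Removing the term $h_q(t)$ for one class of size $q$ and replacing it by $g_q(t)$ in $\Phi_T$ gives
\[
\Phi_q(t)=\frac{(r-s-1)q}{t+q}+\frac{s(q+1)}{t+q+1}+g_q(t)=\Phi_-(t).
\]
Placing the edge requires $k=q\ge 2$, which is why the hypothesis $q\ge 2$ is needed; it also guarantees $k\ge 2$, as required by the monotonicity part of Lemma \ref{eigenvalue of A(Sigma)}. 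By that lemma, for $t>1$ the value $t$ is an eigenvalue of $A(T_r^\diamond(n))$ if and only if $\Phi_-(t)=1$. Moreover, $\Phi_-$ is strictly decreasing on $(1,\infty)$ and tends to $0$.

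Next I check that $\Phi_-(1)>1$. We have $g_q(1)=\frac{q-1}{q}$. Hence
\[
\Phi_-(1)=\frac{(r-s-1)q}{q+1}+\frac{s(q+1)}{q+2}+\frac{q-1}{q}\ge \frac{q+1}{q+2}+\frac{q-1}{q},
\]
using $s\ge 1$ and dropping the nonnegative first term. For $q\ge 2$ this is at least $\frac34+\frac12>1$. By continuity and strict monotonicity, $\Phi_-(t)=1$ therefore has exactly one root $t_0>1$.

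It remains to show $\lambda_1(T_r^\diamond(n))=t_0$. Every eigenvalue larger than $1$ is a root of $\Phi_-(t)=1$, so it is enough to show that $\lambda_1>1$. The graph $T_r^\diamond(n)$ contains an all-positive triangle: take $u$ together with one vertex from each of two other classes (here $r\ge 2$ and $s\ge1$ ensure the classes exist; if $r=2$, use the vertices $u$, $l$ of the $q$-class and one vertex $w$ of the other class to form the positive path $u\,w\,l$, together with an edge $w w'$). Alternatively, the Rayleigh quotient of the all-ones vector gives
\[
\lambda_1\ge \frac{2(e(T_r(n))-1)}{n}>1
\]
for $n\ge 2q+1\ge 5$. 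Hence $\lambda_1>1$, so $\lambda_1=t_0$.

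The main obstacle is this last step: making sure the Rayleigh-quotient bound is cleanly above $1$ in the smallest cases. I would verify it directly using $e(T_r(n))\ge q(q+1)\ge 6$ and $n=rq+s$, or use the explicit positive triangle or $K_{1,2}$ argument above.
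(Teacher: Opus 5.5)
Your proposal is correct and follows the paper's route. The paper derives this corollary "similar to" its lemma for $T_r^\star(n)$: apply Lemma \ref{eigenvalue of A(Sigma)} with the negative edge in a class of size $k=q\ge 2$, check $\Phi_-(1)>1$, and use strict monotonicity to get a unique root $t_0>1$.

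The last step, which you flag as the main obstacle, is actually immediate. The converse half of that same lemma makes $t_0$ an eigenvalue, so $\lambda_1(T_r^\diamond(n))\ge t_0>1$, and uniqueness then forces $\lambda_1=t_0$. Separately, your $r=2$ path argument needs a third vertex $l$ in the $q$-class, which does not exist when $q=2$. Your Rayleigh-quotient bound still covers that case, so nothing is lost.
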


We present the following theorem as a preliminary step toward proving Theorems \ref{Classical spectral Turan for s-g} and \ref{Classical spectral Turan for s-g-2}.
\begin{theorem}\label{eigenvalue-T_{r}^{+-}(n)}
	Let \[
	\gamma =\lceil \frac{n}{r} \rceil=
	\begin{cases}
		q, & s=0, \\
		q+1, & 1\le s<r.
	\end{cases}
	\] If $1\le s<r$ and $q\ge 2$, then
	$$
	\lambda_{1}(T_{r}^\diamond(n))<\lambda_{1}(T_{r}^\star(n)).
	$$
	Moreover, for $0\le s<r$,
	$$
	\lambda_{1}(T_{r}^\star(n))<\lambda_1(T_{r}(n)).
	$$
\end{theorem}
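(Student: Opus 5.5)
Both inequalities will be proved by comparing the secular functions of Lemmas \ref{eigenvalue of T_{r}(n)}, \ref{eigenvalue of A(Sigma)}, \ref{eigenvalue of T_{r}^{+}(n)} and Corollary \ref{eigenvalue T_{r}^{-}(n)}. Each function is strictly decreasing on $t>1$ and its root is the index in question. So if $\Phi_a(t)<\Phi_b(t)$ holds pointwise near the relevant root, then the root of $\Phi_a=1$ is smaller than that of $\Phi_b=1$. The key quantity is the local correction $D_k(t):=h_k(t)-g_k(t)$, which measures the loss from inserting a negative edge into a part of size $k$.

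\emph{Second inequality.} We have $\Phi_+=\Phi_T-h_\gamma+g_\gamma$. Writing $a=\frac{2}{t+1}>0$, the difference is
\[
h_k(t)-g_k(t)=\frac{k}{t+k}-\frac{k-a}{t+k-a}=\frac{at}{(t+k)(t+k-a)}>0
\]
for $t>0$ and $k>a$. The condition $k>a$ holds since $k\ge 2>a$ when $t>0$; in the case $\gamma=1$ there is no room for the edge, so that case is excluded implicitly. Hence $\Phi_+(t)<\Phi_T(t)$ for all $t\ge1$. Put $\lambda_+=\lambda_1(T_r^\star(n))$, which is at least $1$ by Lemma \ref{eigenvalue of T_{r}^{+}(n)}. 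Then $\Phi_T(\lambda_+)>\Phi_+(\lambda_+)=1$. Since $\Phi_T$ is strictly decreasing on $t>0$ with unique root $\lambda_1(T_r(n))$, this gives $\lambda_+<\lambda_1(T_r(n))$. The exceptional case $(r,q,s)=(2,1,1)$ gives $\lambda_+=1$ and $\lambda_1(T_2(3))=2$, and the same computation covers it.

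\emph{First inequality.} Now assume $1\le s<r$ and $q\ge2$. Both $T_r^\star(n)$ and $T_r^\diamond(n)$ are obtained from $T_r(n)$, so
\[
\Phi_+(t)=\Phi_T(t)-D_{q+1}(t),\qquad \Phi_-(t)=\Phi_T(t)-D_q(t).
\]
It therefore suffices to show $D_q(t)>D_{q+1}(t)$ for $t>1$, i.e. that $k\mapsto \frac{1}{(t+k)(t+k-a)}$ is decreasing in $k$. This is immediate because both factors are positive and increasing in $k$. Consequently $\Phi_-(t)<\Phi_+(t)$ for all $t>1$.

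Let $\lambda_-=\lambda_1(T_r^\diamond(n))>1$ from Corollary \ref{eigenvalue T_{r}^{-}(n)}. Then $\Phi_+(\lambda_-)>\Phi_-(\lambda_-)=1$, and since $\Phi_+$ is strictly decreasing on $t>1$ with root $\lambda_+>1$, we conclude $\lambda_-<\lambda_+$.

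The main obstacle is bookkeeping rather than analysis: checking that the roots really lie in $t>1$, where monotonicity is known, and handling the degenerate cases $q=1$ or $\gamma=1$ separately. For $s=0$ the first claim is vacuous. The positivity identity for $h_k-g_k$ is the only computation, and it is elementary.
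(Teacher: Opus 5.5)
Your proof is correct and follows the paper's argument. You write $\Phi_\pm=\Phi_T-\Delta_k$ with $\Delta_k=h_k-g_k>0$ strictly decreasing in $k$ and compare roots via monotonicity; the one cosmetic difference is in the second inequality, where you evaluate $\Phi_T$ at $\lambda_1(T_r^\star(n))$ and use monotonicity of $\Phi_T$ on $t>0$ (the paper instead evaluates $\Phi_+$ at $\lambda_1(T_r(n))$), which has the small advantage of covering $\lambda_1(T_r^\star(n))=1$ without comment. One harmless slip: in the case $(r,q,s)=(2,1,1)$, $T_2(3)=K_{1,2}$ has $\lambda_1=\sqrt{2}$, not $2$.
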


\begin{proof}
	Let
	\[
	\Delta_k(t) = h_k(t) - g_k(t).
	\]
	Direct simplification yields
	\[
	\Delta_k(t) = \frac{2t}{(t+1)(t+k)\left(t + k - \dfrac{2}{t+1}\right)} > 0.
	\]
	When $s=0$, $\lambda_1(T_{r}^\star(n))=\lambda_1(T_{r}^\diamond(n))$. Suppose that $1\le s<r$. Then
	\[
	\Phi_-(t)= \Phi_T(t)-\Delta_q(t),~\Phi_+(t)= \Phi_T(t)-\Delta_{q+1}(t).
	\]
	For fixed $t\ge 1$, $\Delta_k(t)$ is strictly decreasing in $k$. Then, we obtain $\Phi_+(t)>\Phi_-(t)$. By Corollary \ref{eigenvalue T_{r}^{-}(n)}, $\Phi_-(\lambda_1(T_{r}^\diamond(n)))=1$. Then, $\Phi_+(\lambda_1(T_{r}^\diamond(n)))>1$. By Lemmas \ref{eigenvalue of A(Sigma)} and \ref{eigenvalue of T_{r}^{+}(n)}, $\Phi_+$ is strictly decreasing on $t>1$ and $\Phi_+(\lambda_1(T_{r}^\star(n)))=1$. Hence, $\lambda_1(T_{r}^\star(n))>\lambda_1(T_{r}^\diamond(n))$.

	Regardless of whether \(s = 0\), we have
	$$
	\Phi_+(t) =\Phi_T(t)-\Delta_\gamma(t).
	$$
	By Lemma \ref{eigenvalue of T_{r}(n)}, $\Phi_T(\lambda_1(T_{r}(n))) = 1$. Note that $\Delta_\gamma(t) > 0$, so $\Phi_+(\lambda_1(T_{r}(n))) < 1$. Recall that $\Phi_+$ is strictly decreasing on $t>1$ and $\Phi_+(\lambda_1(T_{r}^\star(n))) = 1$. We obtain $\lambda_1(T_{r}^\star(n)) < \lambda_1(T_{r}(n))$.
	
	This completes the proof.\qed
\end{proof}

\subsection{Proof~of~Theorem~\ref{Classical spectral Turan for s-g}}
\begin{Tproof}\textbf{~of~Theorem~\ref{Classical spectral Turan for s-g}.}~Let $\Gamma=(G,\sigma)$ be a signed graph having the maximum index over all $\mathcal{K}^{+}_{r+1}$-free unbalanced signed graph of order $n$. By Lemma \ref{Sun-Liu-Lan-LAA-2022}, let $\Gamma^{\prime}=(G,\sigma^{\prime})$ be the signed graph switching equivalent to $\Gamma$ for which $\lambda_{1}(\Gamma^{\prime})$ admits a non-negative eigenvector. Let $V(\Gamma^{\prime}) = \left \{v_1,v_2,\dots,v_n \right \}$ and $\mathbf{x}=\left (x_1,x_2,\dots,x_n \right )^T$ be the non-negative unit eigenvector of $A(\Gamma^{\prime})$ corresponding to $\lambda_1(\Gamma^{\prime})$. By Lemmas \ref{Zaslavsky-DAM-1982} and \ref{Hou-Tang-Wang-AMC-2019}, $\Gamma^{\prime}=(G,\sigma^{\prime})$ is also $\mathcal{K}^{+}_{r+1}$-free unbalanced signed graph with the maximum index. Note that $T_{r}^\star(n)$ is $\mathcal{K}^{+}_{r+1}$-free unbalanced signed graph. By the Rayleigh principle, we have
	\begin{equation}\label{Index of signed Turan graph-1}
		\lambda_{1}(\Gamma^{\prime})\ge \lambda_1(T_{r}^\star(n))\ge \frac{\mathbf{1}^T A(T_{r}^\star(n))\mathbf{1}}{\mathbf{1}^T \mathbf{1}}=L- \frac{s(r - s)}{rn} - \frac{2}{n}.
	\end{equation}

	\begin{claim}\label{eigenvector of one zero coordinate}
		The eigenvector $\mathbf{x}$ has at most one zero coordinate.
	\end{claim}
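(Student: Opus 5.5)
We argue by contradiction. Suppose $\mathbf{x}$ has two zero coordinates, say $x_a=x_b=0$ with $a\neq b$. The plan is to modify $\Gamma'$ near $v_a,v_b$ so that the result stays unbalanced and $\mathcal{K}^{+}_{r+1}$-free but has index strictly larger than $\lambda_1(\Gamma')$. This would contradict the maximality of $\Gamma'$.

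\emph{Structure of the zero set.} Let $Z=\{v_i: x_i=0\}$ and $P=V\setminus Z$. The eigen-equation at $v_a$ reads $0=\lambda_1 x_a=\sum_{v_j\sim v_a}\sigma'(v_av_j)x_j$. Since $\mathbf{x}\ge 0$ this gives no information yet, so I would use Rayleigh arguments directly.

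\emph{The surgery.} Take $v_b$ and consider the signed graph $\Gamma''$ obtained from $\Gamma'$ as follows.
\begin{itemize}
\item Delete all edges at $v_b$.
\item Make $v_b$ a twin of a vertex $v_c$ with the largest coordinate $x_c>0$: join $v_b$ to every neighbor of $v_c$ with the same sign, and do not join $v_b$ to $v_c$.
\end{itemize}
Twins are nonadjacent, so any positive-balanced clique in $\Gamma''$ uses at most one of $v_b,v_c$. Hence $\Gamma''$ is still $\mathcal{K}^{+}_{r+1}$-free.

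For unbalancedness, keep the frustrated structure of $\Gamma'$ away from $v_b$ where possible. If every negative cycle of $\Gamma'$ passes through $v_b$, instead twin the other zero vertex $v_a$ onto $v_c$, leaving $v_b$ untouched. Doing this needs some care: I would pick the zero vertex whose removal leaves $\Gamma'$ unbalanced. If no such choice exists, both zero vertices lie on every negative cycle, and I would rebuild an unbalanced piece through them (a negative triangle, say) while keeping $\mathcal{K}^+_{r+1}$-freeness.

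\emph{Rayleigh comparison.} Since $x_b=0$, the vector $\mathbf{x}$ gives $\mathbf{x}^TA(\Gamma'')\mathbf{x}=\mathbf{x}^TA(\Gamma')\mathbf{x}=\lambda_1$, because all deleted edges at $v_b$ contribute zero. Now perturb: set $\mathbf{y}=\mathbf{x}+\varepsilon\mathbf{e}_b$. Then
\[
\mathbf{y}^TA(\Gamma'')\mathbf{y}=\lambda_1+2\varepsilon\sum_{w\sim_{\Gamma''} v_b}\sigma''(v_bw)x_w+O(\varepsilon^2)=\lambda_1+2\varepsilon\lambda_1x_c+O(\varepsilon^2),
\]
while $\mathbf{y}^T\mathbf{y}=1+\varepsilon^2$. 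Since $\lambda_1x_c>0$ (recall $\lambda_1\ge\lambda_1(T_r^\star(n))>0$ by \eqref{Index of signed Turan graph-1}), small $\varepsilon>0$ gives $\lambda_1(\Gamma'')>\lambda_1(\Gamma')$, a contradiction.

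\emph{Why two zeros are needed.} The second zero coordinate is what lets us preserve unbalancedness. With only one zero vertex, the negative cycles could all be forced through it, and the surgery could destroy unbalance; this is why the claim allows exactly one zero.

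\emph{Main obstacle.} The delicate step is guaranteeing that $\Gamma''$ remains unbalanced. Two approaches should be tried in order:
\begin{enumerate}
\item Twinning the zero vertex not essential to the frustration.
\item Otherwise, keeping one negative edge $v_av_b$ between the two zero vertices. Then $v_a,v_b$ together with a common positive neighbour form a negative triangle, which is not a balanced clique. I would check that adding it does not create a member of $\mathcal{K}^+_{r+1}$: any such clique containing the negative edge is unbalanced when it is a triangle, and for larger cliques one must check balance carefully.
\end{enumerate}
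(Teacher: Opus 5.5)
\textbf{There is a genuine gap: the unbalancedness case is never closed.}

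Your one-vertex surgery is sound as far as it goes. The twin graph $\Gamma''$ is $\mathcal{K}^+_{r+1}$-free, and the test vector $\mathbf{x}+\varepsilon\mathbf{e}_b$ gives $\lambda_1(\Gamma'')>\lambda_1(\Gamma')$; note that this part uses only one zero coordinate. The problem is unbalancedness, which you identify but do not resolve.

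In fact $\Gamma''$ is unbalanced if and only if $\Gamma'-v_b$ is. If $\Gamma'-v_b$ is balanced, switch it to all-positive and place $v_b$ on the same side as $v_c$; every new edge at $v_b$ then becomes positive. So when every negative cycle of $\Gamma'$ passes through both $v_a$ and $v_b$, your main construction fails for either choice of zero vertex. The proof then rests entirely on the fallback ``rebuild an unbalanced piece through them,'' which you never specify. You defer its $\mathcal{K}^+_{r+1}$-freeness check (``one must check balance carefully''), and you do not redo the Rayleigh computation, which now picks up an $\varepsilon^2$ term from the added negative edge. As written, this case is not covered.

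\textbf{How to close it.} Twin \emph{both} $v_a$ and $v_b$ onto $v_c$, and join $v_a,v_b$ by a negative edge. This also removes the need for a case split.
\begin{itemize}
\item Unbalanced: since $\lambda_1x_c>0$, $v_c$ has a neighbour $w$ with $x_w>0$, so $v_av_bw$ is a negative triangle.
\item No balanced clique contains both $v_a$ and $v_b$: any clique on at least three vertices containing both contains such a triangle. It is therefore unbalanced, because subgraphs of balanced signed graphs are balanced. This is the check you deferred, and it is immediate.
\item A clique containing exactly one of $v_a,v_b$ transfers to a clique through $v_c$ in $\Gamma'$ with the same signs.
\item With $\mathbf{y}=\mathbf{x}+\varepsilon(\mathbf{e}_a+\mathbf{e}_b)$, the Rayleigh quotient is $(\lambda_1+4\varepsilon\lambda_1x_c-2\varepsilon^2)/(1+2\varepsilon^2)$. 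This exceeds $\lambda_1$ for small $\varepsilon>0$.
\end{itemize}

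\textbf{Comparison with the paper.} Once repaired, your route is a self-contained local-surgery argument that uses only extremality. The paper avoids surgery altogether. Since $x_i=x_j=0$, it gets $\lambda_1(\Gamma')\le\lambda_1(\Gamma'-v_i-v_j)\le(1-\frac1r)(n-2)$ from the Wilf-type bound of Lemma~\ref{Wang-Yan-Qian-LAA-2021}. It then checks arithmetically that this lies strictly below the Rayleigh lower bound $L-\frac{s(r-s)}{rn}-\frac2n$ obtained from $T_r^\star(n)$.
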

	\noindent\emph{Proof of Claim 1.}~If $\mathbf{x}$ has two zero coordinates, say $x_i$ and $x_j$. By Lemma \ref{Wang-Yan-Qian-LAA-2021}, then
	$$
	\lambda_1(\Gamma^{\prime})=\mathbf{x}^{T}A(\Gamma^{\prime})\mathbf{x}\le \lambda_1(\Gamma^{\prime}-v_i-v_j)\le \left(1-\frac{1}{\omega_b(\Gamma)}\right)(n-2)\le \left(1-\frac{1}{r}\right)(n-2).
	$$
	If $q\ge 2$, then $n\ge 2r$, and $s(r-s)\le \frac{r^2}{4}$, so
	\[
	2(r-1)n - s(r-s)-2r \ge \frac{r(15r-24)}{4} > 0.
	\]
	If $q=1$, then $n=r+s$ with $s\ge 1$, and
	\[
	2(r-1)n - s(r-s)-2r = (r-2)(2r+s)+s^2 > 0.
	\]
	Hence,
	\begin{align*}
		&L- \frac{s(r - s)}{rn} - \frac{2}{n}-\left(1-\frac{1}{r}\right)(n-2)
		\\
		&= \frac{2(r-1)n - s(r-s)-2r}{rn}\\
		&>0,
	\end{align*}
	which contradicts equality \eqref{Index of signed Turan graph-1}.
	
	\begin{claim}\label{connected-1}
		$\Gamma^{\prime}$ is connected.
	\end{claim}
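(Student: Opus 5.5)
We argue by contradiction. Suppose $\Gamma'$ is disconnected, with components $\Gamma'_1,\dots,\Gamma'_p$, $p\ge 2$. Since the eigenvalues of a disconnected signed graph are the union of those of its components, $\lambda_1(\Gamma')=\max_i \lambda_1(\Gamma'_i)$. Fix a component $\Gamma'_1$ with $\lambda_1(\Gamma'_1)=\lambda_1(\Gamma')$, and let $n_1=|V(\Gamma'_1)|\le n-1$.

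\emph{Case 1: some other component $\Gamma'_2$ is nontrivial.} Then $|V(\Gamma'_2)|\ge 2$, so $n_1\le n-2$. The component $\Gamma'_1$ is $\mathcal{K}^{+}_{r+1}$-free, so $\omega_b(\Gamma'_1)\le r$. By Lemma~\ref{Wang-Yan-Qian-LAA-2021},
\[
\lambda_1(\Gamma')=\lambda_1(\Gamma'_1)\le \Bigl(1-\frac1r\Bigr)(n-2).
\]
This contradicts \eqref{Index of signed Turan graph-1} by exactly the computation already carried out in the proof of Claim~\ref{eigenvector of one zero coordinate}.

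\emph{Case 2: every component other than $\Gamma'_1$ is an isolated vertex.} Since $p\ge 2$, there is at least one isolated vertex, so $n_1\le n-1$. Moreover, $\mathbf{x}$ vanishes on every component whose index is smaller than $\lambda_1(\Gamma')$. Unless $\lambda_1(\Gamma')=0$, which is impossible by \eqref{Index of signed Turan graph-1}, this means $\mathbf{x}$ is zero on each isolated vertex. By Claim~\ref{eigenvector of one zero coordinate}, there is then exactly one isolated vertex $w$, and $\Gamma'_1=\Gamma'-w$ has $n-1$ vertices.

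We now add a positive edge joining $w$ to one vertex $u$ of $\Gamma'_1$ with $x_u>0$. The new vertex has degree $1$, so it lies on no cycle. Hence the new graph is still unbalanced, and it is still $\mathcal{K}^{+}_{r+1}$-free, because any $K_{r+1}$ containing $w$ would require $r\ge 1$ neighbours of $w$ forming a clique together with $w$, which is impossible when $r\ge 2$. The Rayleigh quotient with the vector $\mathbf{x}$ is unchanged, since $x_w=0$, and $\mathbf{x}$ remains nonnegative. If the new index equalled $\lambda_1(\Gamma')$, then $\mathbf{x}$ would be an eigenvector of the new matrix. But the eigen-equation at $w$ reads $\lambda_1 x_w = x_u>0$, whereas $x_w=0$, a contradiction. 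So the index strictly increases, contradicting the maximality of $\lambda_1(\Gamma)$.

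The main subtlety is Case~2: we must make sure that $\mathbf{x}$ is supported only on $\Gamma'_1$, which holds once we take $\mathbf{x}$ to be the Perron-type nonnegative eigenvector concentrated on one component. If several components share the index, we choose $\mathbf{x}$ supported on a single one; the argument in Case~1 handles this anyway, since in that situation some other component is nontrivial. With this in place, the strict increase of the index in Case~2 is the only nontrivial step.
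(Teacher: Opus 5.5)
Your proof is correct, but it is organized differently from the paper's. The paper does not split into cases. It takes arbitrary vertices $v_i\in V(\Gamma'_1)$ and $v_j\in V(\Gamma'_2)$ and adds the positive bridge $v_iv_j$. This edge lies on no cycle, so unbalancedness and $\mathcal{K}^{+}_{r+1}$-freeness are preserved, and the Rayleigh gain is $2x_ix_j\ge 0$. In the equality case, the eigen-equations at both endpoints give $x_i=x_j=0$, contradicting Claim~\ref{eigenvector of one zero coordinate}.

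That single argument covers a second nontrivial component just as well as an isolated vertex, so your Case~1 is redundant. In it you re-run the counting behind Claim~\ref{eigenvector of one zero coordinate}: Lemma~\ref{Wang-Yan-Qian-LAA-2021} on at most $n-2$ vertices against \eqref{Index of signed Turan graph-1}. The paper instead uses the claim as a black box through the perturbation. Your Case~2 is the paper's perturbation argument, slightly streamlined because you already know $x_w=0$ and $x_u>0$, so one eigen-equation suffices.

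What your split buys is an explicit picture: once Claim~\ref{eigenvector of one zero coordinate} holds, the only possible disconnection is a single isolated vertex. The cost is an extra case and a repeated numerical check.

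Drop your closing remark about choosing $\mathbf{x}$ ``concentrated on one component''. The vector $\mathbf{x}$ is fixed before the claims, and no re-choice is needed. In Case~2, $\lambda_1(\Gamma')x_w=0$ with $\lambda_1(\Gamma')>0$ already gives $x_w=0$, and Case~1 never uses $\mathbf{x}$ at all.
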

	\noindent\emph{Proof of Claim 2.}~By contradiction, assume that $\Gamma^{\prime}_1$ and $\Gamma^{\prime}_2$ are two distinct connected components of $\Gamma^{\prime}$, where $\lambda_1(\Gamma^{\prime})=\lambda_1(\Gamma^{\prime}_1)$. Without loss of generality, suppose that $v_i \in V(\Gamma^{\prime}_1)$ and $v_j \in V(\Gamma^{\prime}_2)$. Let $\Gamma^{*}$ be the signed graph obtained from $\Gamma^{\prime}$ by adding a positive edge $v_i v_j$. Clearly, $\Gamma^{*}$ is unbalanced and $\mathcal{K}^{+}_{r+1}$-free. By the Rayleigh principle, we have
	$$
	\lambda_1(\Gamma^{*})-\lambda_1(\Gamma^{\prime})\ge \mathbf{x}^{T}A(\Gamma^{*})\mathbf{x}-\mathbf{x}^{T}A(\Gamma^{\prime})\mathbf{x}=2x_i x_j\ge 0.
	$$
	If $\lambda_1(\Gamma^{*})=\lambda_1(\Gamma^{\prime})$, then $\mathbf{x}$ is an eigenvector of $A(\Gamma^{*})$ corresponding to $\lambda_1(\Gamma^{*})$. Note that
	$$
	\lambda_1(\Gamma^{\prime})x_i=\sum_{v_k \in N_{\Gamma^{\prime}}(v_i)}\sigma^{\prime}(v_k v_i)x_k, ~~
	\lambda_1(\Gamma^{\prime})x_j=\sum_{v_k \in N_{\Gamma^{\prime}}(v_j)}\sigma^{\prime}(v_k v_j)x_k
	$$
	and
	$$
	\lambda_1(\Gamma^{*})x_i=\sum_{v_k \in N_{\Gamma^{\prime}}(v_i)}\sigma^{\prime}(v_k v_i)x_k+x_j, ~~
	\lambda_1(\Gamma^{*})x_j=\sum_{v_k \in N_{\Gamma^{\prime}}(v_j)}\sigma^{\prime}(v_k v_j)x_k+x_i.
	$$
	Then we have $x_i=x_j=0$, which contradicts  Claim \ref{eigenvector of one zero coordinate}. Hence, $\lambda_1(\Gamma^{*})>\lambda_1(\Gamma^{\prime})$, which contradicts the maximality of $\lambda_1(\Gamma^{\prime})$.

	Since $\Gamma'$ is unbalanced, $\Gamma'$ contains at least one negative edge and at least one negative cycle. Let $\mathcal{L}$ be one of shortest negative cycles of $\Gamma'$.
	\begin{claim}\label{negative edge}
		$\mathcal{L}$ contains all negative edges of $\Gamma'$.
	\end{claim}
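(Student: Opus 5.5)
Suppose, for contradiction, that some negative edge $e=v_iv_j$ of $\Gamma'$ does not lie on $\mathcal{L}$. The idea is to remove $e$, or better, to flip its sign to positive, and show that the index goes up while the new signed graph is still unbalanced and $\mathcal{K}^{+}_{r+1}$-free. That would contradict the maximality of $\lambda_1(\Gamma')$.

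\textbf{Step 1 (the index increases).} Let $\Gamma^{\ast}$ be obtained from $\Gamma'$ by deleting $e$. Since $\mathbf{x}\ge 0$, the Rayleigh principle gives
\[
\lambda_1(\Gamma^{\ast})-\lambda_1(\Gamma')\ge \mathbf{x}^TA(\Gamma^{\ast})\mathbf{x}-\mathbf{x}^TA(\Gamma')\mathbf{x}=2x_ix_j\ge 0 .
\]
Suppose equality holds. Then $\mathbf{x}$ is also an eigenvector of $A(\Gamma^{\ast})$ for $\lambda_1(\Gamma')$. Comparing the eigen-equations at $v_i$ and $v_j$ for $\Gamma'$ and $\Gamma^{\ast}$, exactly as in the proof of Claim \ref{connected-1}, forces $x_i=x_j=0$. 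This contradicts Claim \ref{eigenvector of one zero coordinate}. Hence $\lambda_1(\Gamma^{\ast})>\lambda_1(\Gamma')$.

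\textbf{Step 2 (the constraints are kept).} $\Gamma^{\ast}$ is still unbalanced, because the negative cycle $\mathcal{L}$ avoids $e$ and so survives. Deleting an edge cannot create a subgraph whose underlying graph is $K_{r+1}$, so $\Gamma^{\ast}$ is still $\mathcal{K}^{+}_{r+1}$-free. Therefore $\Gamma^{\ast}$ is an admissible competitor with strictly larger index, a contradiction.

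I could instead flip $e$ to positive, which gives the larger gain $4x_ix_j$. But that version would require checking that no balanced $K_{r+1}$ appears, so deletion is the cleaner choice.

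The main subtlety is the equality case in Step 1. Its argument at $v_i,v_j$ uses only that $\mathbf{x}$ is a common eigenvector of both matrices for the same eigenvalue. This gives $\pm x_j=0$ and $\pm x_i=0$, so $x_i=x_j=0$, and this conclusion uses no sign information about $e$. Together with Claim \ref{eigenvector of one zero coordinate}, this makes Step 1 rigorous.
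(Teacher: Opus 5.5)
Your proposal is correct and matches the paper's proof: delete the negative edge $v_iv_j\notin E(\mathcal{L})$, use the non-negative eigenvector $\mathbf{x}$ in the Rayleigh quotient to get a gain of $2x_ix_j\ge 0$, rule out equality because it forces $x_i=x_j=0$ against Claim~\ref{eigenvector of one zero coordinate}, and note that the surviving negative cycle $\mathcal{L}$ keeps $\Gamma^{\ast}$ unbalanced while deletion preserves $\mathcal{K}^{+}_{r+1}$-freeness. Your remark that the equality case needs no sign information about $e$ is accurate, and the paper also deletes the edge rather than flipping its sign.
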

	\noindent\emph{Proof of Claim 3.}~If there is a negative edge $v_i v_j \in E(\Gamma')\setminus E(\mathcal{L})$, then we construct a new graph $\Gamma^{*}$ obtained from $\Gamma^{\prime}$ by deleting $v_i v_j$. Obviously, $\Gamma^{*}$ is unbalanced and $\mathcal{K}_{r+1}^{+}$-free. By the Rayleigh principle,
	$$
	\lambda_1(\Gamma^{*})-\lambda_1(\Gamma^{\prime})\ge \mathbf{x}^{T}A(\Gamma^{*})\mathbf{x}-\mathbf{x}^{T}A(\Gamma^{\prime})\mathbf{x}=2x_i x_j\ge 0.
	$$
	If $\lambda_1(\Gamma^{*})=\lambda_1(\Gamma^{\prime})$, then $\mathbf{x}$ is an eigenvector of $A(\Gamma^{*})$ corresponding to $\lambda_1(\Gamma^{*})$. Note that
	$$
	\lambda_1(\Gamma^{\prime})x_i=\sum_{v_k \in N_{\Gamma^{\prime}}(v_i)}\sigma^{\prime}(v_k v_i)x_k, ~~
	\lambda_1(\Gamma^{\prime})x_j=\sum_{v_k \in N_{\Gamma^{\prime}}(v_j)}\sigma^{\prime}(v_k v_j)x_k
	$$
	and
	$$
	\lambda_1(\Gamma^{*})x_i=\sum_{v_k \in N_{\Gamma^{\prime}}(v_i)}\sigma^{\prime}(v_k v_i)x_k+x_j, ~~
	\lambda_1(\Gamma^{*})x_j=\sum_{v_k \in N_{\Gamma^{\prime}}(v_j)}\sigma^{\prime}(v_k v_j)x_k+x_i.
	$$
	Then we have $x_i=x_j=0$, which contradicts Claim \ref{eigenvector of one zero coordinate}. Hence, $\lambda_1(\Gamma^{*})>\lambda_1(\Gamma^{\prime})$, which contradicts the maximality of $\lambda_1(\Gamma^{\prime})$.
	
	\begin{claim}\label{one negative edge-1}
		$\mathcal{L}$ contains exactly one negative edge.
	\end{claim}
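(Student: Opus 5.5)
The plan is a contradiction argument that reuses the Rayleigh-quotient device of Claims~2 and~3. Suppose $\mathcal{L}$ contains at least two negative edges. Since $\mathcal{L}$ is a negative cycle, it has an odd number of negative edges, so it has at least three, say $e_1=v_av_b,\ e_2,\dots,e_k$ with $k\ge 3$. By Claim~\ref{negative edge}, these are all the negative edges of $\Gamma'$. Two facts will be used repeatedly.

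\begin{itemize}
\item Changing any edge set of $\Gamma'$ (deleting edges, or flipping negative edges to positive) preserves $\mathcal{K}^{+}_{r+1}$-freeness, provided the resulting graph is switching equivalent to a subgraph of $\Gamma'$. This holds because balance of a subgraph is switching invariant (Lemma~\ref{Zaslavsky-DAM-1982}).
\item If such a modification raises the Rayleigh quotient by a sum of nonnegative terms $c\,x_ix_j$, then either the index strictly increases, or $\mathbf{x}$ is also an eigenvector of the new matrix. In the latter case, comparing the eigen-equations at the endpoints, exactly as in Claims~2 and~3, forces both endpoints of a modified edge to have zero coordinates. This contradicts Claim~\ref{eigenvector of one zero coordinate}.
\end{itemize}

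\textbf{Case 1: $\Gamma'-e_1$ is unbalanced.} Let $\Gamma^*=\Gamma'-e_1$. It is unbalanced and $\mathcal{K}^{+}_{r+1}$-free, and
\[
\mathbf{x}^TA(\Gamma^*)\mathbf{x}-\mathbf{x}^TA(\Gamma')\mathbf{x}=2x_ax_b\ge 0 .
\]
The equality analysis of Claim~\ref{negative edge} applies verbatim and contradicts the maximality of $\lambda_1(\Gamma')$. (The same applies to any $e_i$, but one edge suffices.)

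\textbf{Case 2: $\Gamma'-e_1$ is balanced.} Then there is a vertex set $U$ such that switching $\Gamma'-e_1$ at $U$ makes every edge positive. In other words, $\delta(U)\setminus\{e_1\}=\{e_2,\dots,e_k\}$. Let $\Gamma''$ be $\Gamma'$ switched at $U$.

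\begin{itemize}
\item $\Gamma''\sim\Gamma'$, so by Lemmas~\ref{Zaslavsky-DAM-1982} and~\ref{Hou-Tang-Wang-AMC-2019} it is unbalanced, $\mathcal{K}^{+}_{r+1}$-free, and has the same index.
\item Since $\Gamma''$ is unbalanced, $e_1$ cannot become positive. Hence $e_1\notin\delta(U)$.
\item Therefore $\Gamma''$ is exactly $\Gamma'$ with $e_2,\dots,e_k$ turned positive.
\end{itemize}

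Now evaluate the Rayleigh quotient of $A(\Gamma'')$ at the same vector $\mathbf{x}$, which is not the switched vector:
\[
\lambda_1(\Gamma'')\ \ge\ \mathbf{x}^TA(\Gamma'')\mathbf{x}=\lambda_1(\Gamma')+4\sum_{i=2}^k x_{e_i}^{-}x_{e_i}^{+}\ \ge\ \lambda_1(\Gamma').
\]
Here $x_{e_i}^{-},x_{e_i}^{+}$ are the coordinates at the two endpoints of $e_i$. Since $\lambda_1(\Gamma'')=\lambda_1(\Gamma')$, equality must hold throughout. Then $\mathbf{x}$ is an eigenvector of $A(\Gamma'')$ for $\lambda_1(\Gamma')$. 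Subtracting the eigen-equations of $\Gamma'$ and $\Gamma''$ at an endpoint $w$ of $e_2$ gives $2x_{w'}=0$, where $w'$ is the other endpoint, and symmetrically. So both endpoints of $e_2$ are zero, which contradicts Claim~\ref{eigenvector of one zero coordinate}. The edges $e_2$ and $e_3$ might share a vertex, but one edge with two zero endpoints is already enough.

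Both cases are contradictory, so $\mathcal{L}$ contains exactly one negative edge.

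\textbf{Main obstacle.} The delicate step is Case~2. There, deleting a negative edge destroys unbalance, so the argument needs a switching that keeps unbalance and strictly decreases the negative-edge set. The key point I must verify carefully is that $e_1\notin\delta(U)$. This follows because otherwise $\Gamma''$ would be all-positive, hence balanced, whereas switching preserves unbalance.
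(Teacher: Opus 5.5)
Your proof is correct, and it takes a different route from the paper's.

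\textbf{The paper's route.} The paper assumes $\mathcal{L}$ has $l\ge 3$ negative edges and reverses two of them at once. This keeps $\mathcal{L}$ negative, so unbalance is automatic. However, $\mathcal{K}^{+}_{r+1}$-freeness of the modified graph is not automatic, and the paper gets it from the local structure of a shortest negative cycle. When $|V(\mathcal{L})|\ge 4$, it shows the endpoints of every negative edge have no common neighbour, so no $K_{r+1}$ contains a reversed edge. When $\mathcal{L}$ is a triangle, it first uses an edge-deletion argument to show that $v_1v_2$ and $v_1v_3$ lie in no other triangle.

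\textbf{Your route.} You split on whether deleting one negative edge $e_1$ keeps $\Gamma'$ unbalanced.
\begin{itemize}
\item If it does, deletion trivially preserves $\mathcal{K}^{+}_{r+1}$-freeness, and the argument of Claim 3 finishes.
\item If it does not, you turn $e_2,\dots,e_k$ positive by an actual switching at $U$. Freeness, unbalance and the index then all follow from switching invariance. The contradiction comes from the nonnegative eigenvector $\mathbf{x}$ and Claim 1 alone; this case does not even use the maximality of $\lambda_1(\Gamma')$.
\end{itemize}

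\textbf{Comparison.} Your approach never examines the geometry of $\mathcal{L}$: no common-neighbour analysis and no triangle subcase. In effect it proves directly that $\Gamma'$ has exactly one negative edge. The paper's approach uses a single uniform modification (reverse two edges) and does not need the balanced/unbalanced dichotomy or the existence of the switching set $U$.

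\textbf{One step to tighten.} Take an endpoint $w$ of $e_2$ that is also incident with another flipped edge. There, the difference of the two eigen-equations is $2\sum x_{w''}$, summed over all flipped edges $ww''$, not just $2x_{w'}$. Nonnegativity of $\mathbf{x}$ still forces every term, including $x_{w'}$, to vanish, so the conclusion stands. Your closing remark about $e_2$ and $e_3$ sharing a vertex should be phrased this way.
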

	\noindent\emph{Proof of Claim 4.}~Suppose that $\mathcal{L}$ contains at least $l(\ge 3)$ negative edges. We now proceed with a case analysis on $|V(\mathcal{L})|$.
	
	\noindent\emph{Case~1.}~Assume that $|V(\mathcal{L})|\ge 4$. For any negative edge $uv\in E(\mathcal{L})$, $u$ and $v$ have no common neighbors. Otherwise, we assume that $w\in N_{\Gamma'}(u)\cap N_{\Gamma'}(v)$. By Claim \ref{negative edge}, we obtain that $\sigma'(wu)=\sigma'(wv)=1$. Then, $w,u,v$ form an unbalanced triangle, which contradicts the minimality of the shortest negative cycle $\mathcal{L}$. We construct a new signed graph $\Gamma^{*}$ by reversing the signs of two negative edges of $\mathcal{L}$ in $\Gamma'$. Clearly, $\Gamma^{*}$ is unbalanced and $\mathcal{K}_{r+1}^{+}$-free. Suppose the reversed negative edges are $\left\{v_{i_1}v_{j_1},v_{i_2}v_{j_2}\right\}$.
	
	If $v_{i_1}v_{j_1}$ and $v_{i_2}v_{j_2}$ are adjacent, without loss of generality, let $v_{j_1}=v_{j_2}$. By the Rayleigh principle,
	$$
	\lambda_1(\Gamma^{*})-\lambda_1(\Gamma^{\prime})\ge \mathbf{x}^{T}A(\Gamma^{*})\mathbf{x}-\mathbf{x}^{T}A(\Gamma^{\prime})\mathbf{x}= 4x_{j_1} (x_{i_1}+x_{i_2})\ge 0.
	$$
	If $\lambda_1(\Gamma^{*})=\lambda_1(\Gamma^{\prime})$, then $\mathbf{x}$ is an eigenvector of $A(\Gamma^{*})$ corresponding to $\lambda_1(\Gamma^{*})$ and $x_{j_1} (x_{i_1}+x_{i_2})=0$. By Claim \ref{eigenvector of one zero coordinate}, $x_{j_1}=0$. Note that
	$$
	\lambda_1(\Gamma^{\prime})x_{j_1}=\sum_{v_k \in N_{\Gamma^{\prime}}(v_{j_1})}\sigma^{\prime}(v_k v_{j_1})x_k
	$$
	and
	$$
	\lambda_1(\Gamma^{*})x_{j_1}=\sum_{v_k \in  N_{\Gamma^{\prime}}(v_{j_1})}\sigma^{\prime}(v_k v_{j_1})x_k+2x_{i_1}+2x_{i_2}.
	$$
	Then we have $x_{i_1}=x_{i_2}=x_{j_1}=0$, which contradicts Claim \ref{eigenvector of one zero coordinate}. Hence, $\lambda_1(\Gamma^{*})>\lambda_1(\Gamma^{\prime})$, which contradicts the maximality of $\lambda_1(\Gamma^{\prime})$.
	
	If $v_{i_1}v_{j_1}$ and $v_{i_2}v_{j_2}$ are non-adjacent, by the Rayleigh principle, we have
	$$
	\lambda_1(\Gamma^{*})-\lambda_1(\Gamma^{\prime})\ge \mathbf{x}^{T}A(\Gamma^{*})\mathbf{x}-\mathbf{x}^{T}A(\Gamma^{\prime})\mathbf{x}= 4x_{i_1}x_{j_1}+4x_{i_2}x_{j_2}\ge 0.
	$$
	If $\lambda_1(\Gamma^{*})=\lambda_1(\Gamma^{\prime})$, then $\mathbf{x}$ is also an eigenvector of $A(\Gamma^{*})$ corresponding to $\lambda_1(\Gamma^{*})$. Note that
	$$
	\lambda_1(\Gamma^{\prime})x_{i_1}=\sum_{v_k \in N_{\Gamma^{\prime}}(v_{i_1})}\sigma^{\prime}(v_k v_{i_1})x_k, ~~
	\lambda_1(\Gamma^{\prime})x_{j_1}=\sum_{v_k \in N_{\Gamma^{\prime}}(v_{j_1})}\sigma^{\prime}(v_k v_{j_1})x_k
	$$
	and
	$$
	\lambda_1(\Gamma^{*})x_{i_1}=\sum_{v_k \in N_{\Gamma^{\prime}}(v_{i_1})}\sigma^{\prime}(v_k v_{i_1})x_k+2x_{j_1}, ~~
	\lambda_1(\Gamma^{*})x_{j_1}=\sum_{v_k \in N_{\Gamma^{\prime}}(v_{j_1})}\sigma^{\prime}(v_k v_{j_1})x_k+2x_{i_1}.
	$$
	Then we have $x_{i_1}=x_{j_1}=0$, which also contradicts Claim \ref{eigenvector of one zero coordinate}. Hence, $\lambda_1(\Gamma^{*})>\lambda_1(\Gamma^{\prime})$, which contradicts the maximality of $\lambda_1(\Gamma^{\prime})$.
	
	\noindent\emph{Case~2.}~Assume that $|V(\mathcal{L})|= 3$. Suppose that $V(\mathcal{L})=\left\{v_1,v_2,v_3\right\}$. We assert that $\left(N_{\Gamma}(v_1)\cap N_{\Gamma}(v_2)\right)\setminus \left\{v_3\right\}=\emptyset$. Otherwise, there exists a vertex $v_4\in \left(N_{\Gamma}(v_1)\cap N_{\Gamma}(v_2)\right)\setminus \left\{v_3\right\}$. By Claim \ref{negative edge}, $v_1,v_2,v_4$ form an unbalanced triangle with one negative edge. We construct a new signed graph $\Gamma^{''}$ by deleting $v_1v_3$. Clearly, $\Gamma^{''}$ is unbalanced and $\mathcal{K}_{r+1}^{+}$-free. By the Rayleigh principle,
	$$
	\lambda_1(\Gamma^{''})-\lambda_1(\Gamma^{\prime})\ge \mathbf{x}^{T}A(\Gamma^{''})\mathbf{x}-\mathbf{x}^{T}A(\Gamma^{\prime})\mathbf{x}= 2x_{1} x_{3}\ge 0.
	$$
	If $\lambda_1(\Gamma^{''})=\lambda_1(\Gamma^{\prime})$, then $\mathbf{x}$ is an eigenvector of $A(\Gamma^{''})$ corresponding to $\lambda_1(\Gamma^{''})$. Note that
	$$
	\lambda_1(\Gamma')x_{1}=\sum_{v_k \in N_{\Gamma^{\prime}}(v_{1})}\sigma^{\prime}(v_k v_{1})x_k~~
	\lambda_1(\Gamma^{\prime})x_3=\sum_{v_k \in N_{\Gamma^{\prime}}(v_3)}\sigma^{\prime}(v_k v_3)x_k
	$$
	and
	$$
	\lambda_1(\Gamma^{''})x_1=\sum_{v_k \in N_{\Gamma^{\prime}}(v_1)}\sigma^{\prime}(v_k v_1)x_k+x_3, ~~
	\lambda_1(\Gamma^{''})x_3=\sum_{v_k \in N_{\Gamma^{\prime}}(v_3)}\sigma^{\prime}(v_k v_3)x_k+x_1.
	$$
	Then we have $x_{1}=x_{3}=0$, which contradicts Claim \ref{eigenvector of one zero coordinate}. Hence, $\lambda_1(\Gamma^{*})>\lambda_1(\Gamma^{\prime})$, which contradicts the maximality of $\lambda_1(\Gamma^{\prime})$. Then $\left(N_{\Gamma}(v_1)\cap N_{\Gamma}(v_2)\right)\setminus \left\{v_3\right\}=\emptyset$. Similarly, $\left(N_{\Gamma}(v_1)\cap N_{\Gamma}(v_3)\right)\setminus \left\{v_2\right\}=\emptyset$. Let $\Sigma$ be the signed graph obtained from $\Gamma^{\prime}$ by reversing the signs of the negative edges $v_1 v_2$ and $v_1 v_3$. Clearly, $\Sigma$ is also unbalanced and $\mathcal{K}_{r+1}^{+}$-free. Similar to the Case $1$. We have $\lambda_1(\Sigma)>\lambda_1(\Gamma^{\prime})$,  which contradicts the maximality of $\lambda_1(\Gamma^{\prime})$.
	
	By Claims \ref{negative edge} and \ref{one negative edge-1}, without loss of generality, assume $v_1v_2$ is the unique negative edge in $\Gamma'$ and that $x_1\le x_2$. By Claim \ref{eigenvector of one zero coordinate}, $x_2>0$.
	\begin{claim}\label{At most one component of x is zero}
		$x_i >0$ for $3\le i \le n$.
	\end{claim}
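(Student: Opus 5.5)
By Claim \ref{eigenvector of one zero coordinate}, $\mathbf{x}$ has at most one zero coordinate. It therefore suffices to rule out the single possibility that $x_1$ is positive while some $x_i=0$ with $3\le i\le n$. If $x_1=0$, then every other coordinate, in particular every $x_i$ with $i\ge 3$, is automatically positive and there is nothing to prove. So assume for contradiction that $x_i=0$ for some $i\ge 3$; then $x_j>0$ for all $j\ne i$.

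\textbf{The eigen-equation at $v_i$.} Since $v_1v_2$ is the unique negative edge of $\Gamma'$ and $i\notin\{1,2\}$, every edge at $v_i$ is positive. Hence
$$
0=\lambda_1(\Gamma')x_i=\sum_{v_k\in N_{\Gamma'}(v_i)}x_k .
$$
Every term on the right is strictly positive, because $v_i$ is the only vertex with a zero coordinate. So $N_{\Gamma'}(v_i)=\emptyset$, i.e.\ $v_i$ is an isolated vertex. This contradicts Claim \ref{connected-1}, since $n\ge r+1\ge 3$ and $\Gamma'$ is connected. Therefore $x_i>0$ for all $3\le i\le n$.

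The argument is short; the only point needing care is that the sign pattern at $v_i$ is entirely positive. This holds because Claims \ref{negative edge} and \ref{one negative edge-1} leave only the edge $v_1v_2$ negative. If some edge at $v_i$ could be negative, cancellation in the eigen-equation would be possible and the argument would fail. With $i\ge3$ excluded from that edge, there is no real obstacle.
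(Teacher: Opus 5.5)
Your proof is correct and matches the paper's argument: since $v_1v_2$ is the unique negative edge, the eigen-equation at a vertex $v_i$ ($i\ge 3$) with $x_i=0$ is a sum of strictly positive terms by Claim~\ref{eigenvector of one zero coordinate}, and connectivity (Claim~\ref{connected-1}) guarantees that this sum is nonempty. The only difference is the order of steps. The paper first uses connectivity to get $d_{\Gamma'}(v_i)\ge 1$ and then derives $0>0$, while you first deduce $N_{\Gamma'}(v_i)=\emptyset$ and then contradict connectivity.
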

	\noindent\emph{Proof of Claim 5.}~~By contradiction, assume that $x_i=0$ for some $i$. Recall that $\Gamma^{\prime}$ contains exactly one negative edge. Then, by Claim \ref{connected-1}, we have $d_{\Gamma^{\prime}}(v_i)\ge 1$ and all edges incident to $v_i$ are positive. By Claim \ref{eigenvector of one zero coordinate}, we have
	$$
	0=\lambda_1(\Gamma^{\prime})x_i=\sum_{v_k \in N_{\Gamma^{\prime}}(v_i)}x_k>0,
	$$
	a contradiction.
	
	For $U\subseteq V(\Gamma)$, let $\Gamma[U]$ denote the signed subgraph of $\Gamma$ induced by $U$, with edge signs inherited from $\Gamma$. Sometimes, we say that $U$ induces $\Gamma[U]$. For any $e\in E(\Gamma)$, $e$ is a \emph{cut edge} if and only if the number of connected components of $\Gamma$ increases upon deleting $e$.
	Recall that $\Gamma'$ contains exactly one negative edge. For all subsequent proofs, any edge addition or deletion without explicit specification refers to positive edges.
	\begin{claim}\label{complete r-partite}
		$\Gamma'[V(\Gamma^{\prime})\setminus \left\{v_1,v_2\right\}]$ is a complete $r$-partite signed graph with all positive edges.
	\end{claim}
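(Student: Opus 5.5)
We work inside $W=V(\Gamma')\setminus\{v_1,v_2\}$ and use the maximality of $\lambda_1(\Gamma')$ together with the positivity from Claim \ref{At most one component of x is zero}: $x_i>0$ for $3\le i\le n$, and $v_1v_2$ is the unique negative edge. Every edge of $H:=\Gamma'[W]$ is positive. We would prove Claim \ref{complete r-partite} in two stages. First, $H$ is complete multipartite. Second, it has exactly $r$ parts.

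\emph{Key observation on cliques.} Any copy of $K_{r+1}$ in $\Gamma'$ that contains the edge $v_1v_2$ contains a triangle with exactly one negative edge, so it is unbalanced. Hence a signed graph with the single negative edge $v_1v_2$ is $\mathcal{K}^+_{r+1}$-free if and only if its underlying graph minus the edge $v_1v_2$ has no $K_{r+1}$. Such a graph is also automatically unbalanced once $v_1v_2$ lies in a triangle. If instead $v_1v_2$ lies only in a longer negative cycle, we retain an explicit negative cycle through $v_1v_2$; the operations below never delete edges at $v_1$ or $v_2$, so this cycle persists.

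\emph{Stage 1 (Zykov symmetrization).} Suppose $u,w\in W$ are nonadjacent and $x_u\le x_w$. Replace $u$ by a clone of $w$: delete all edges at $u$ and join $u$ to $N_{\Gamma'}(w)$ with the same signs. Since $u,w\notin\{v_1,v_2\}$, all these new edges are positive, and the new graph has no positive $K_{r+1}$, because such a clique could contain at most one of the twins $u,w$. Unbalancedness is preserved as long as $u$ was not needed on the negative cycle, and we handle this by choosing which vertex to clone. The Rayleigh quotient with the vector $\mathbf{x}$ modified by $x_u\mapsto x_w$ does not decrease; this is the standard computation using $\lambda x_u=\sum_{N(u)}x_k$ and $\lambda x_w=\sum_{N(w)}x_k$. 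Strictness comes from positivity of the coordinates. Iterating as in Nikiforov's proof shows that nonadjacency is transitive on $W$. Otherwise there would be $u\not\sim w$, $w\not\sim z$ and $u\sim z$, and cloning $w$ onto both $u$ and $z$ would strictly raise the quadratic form by $2x_ux_z>0$ beyond what the clone steps lose, contradicting maximality. Hence $H$ is complete multipartite, and since $H$ is $K_{r+1}$-free it has at most $r$ parts.

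\emph{Stage 2 (exactly $r$ parts).} Suppose $H$ has at most $r-1$ parts. If some part has at least two vertices, split off one vertex $z$ into a new part and add all the positive edges from $z$ to the rest of that part; $z$ has positive coordinate, so $\lambda_1$ strictly increases. We must still check that no $K_{r+1}$ avoiding $v_1v_2$ appears. Such a clique would use at most one of $v_1,v_2$ together with at most $r$ vertices from distinct parts of the new $r$-partite graph. If it used $v_1$ with $r$ parts, we first show by a separate maximality argument that $v_1$ and $v_2$ are each nonadjacent to some whole part. If all parts of $H$ are singletons, then $|W|\le r-1$, which contradicts $n\ge r+1$ in the relevant range, or is handled directly.

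\emph{Main obstacle.} The hard part is the interplay with $v_1,v_2$. Cloning and adding edges inside $W$ could create a $K_{r+1}$ of the form $\{v_1\}\cup(\text{an }r\text{-clique of }W)$, and it could destroy the only negative cycle. We would first try to control $N(v_1)\cap W$ and $N(v_2)\cap W$ via the eigenvalue equations and the inequality $x_1\le x_2$, showing that each of them misses at least one whole part. Only afterwards would we run the clique-counting argument above.
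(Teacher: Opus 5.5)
\textbf{There is a genuine gap in Stage 2, and it cannot be closed, because the statement you are proving is false when $r+1\le n\le 2r$.}

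Your Stage 1 is the paper's argument: Zykov symmetrization inside $W$, either cloning a heavier non-neighbour onto a lighter one, or cloning $w$ onto both ends of an edge $uz$ with $u,z\not\sim w$ and gaining $2x_ux_z$. Your clique observation is also correct.

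Your justification that unbalancedness survives is not correct. Cloning $u\in W$ deletes $uv_1$, $uv_2$ and edges inside $W$, so a negative cycle can be destroyed. You also cannot ``choose'' which vertex to clone, since the direction is forced by the coordinates. The paper handles this differently: if the new graph is balanced, then $v_1v_2$ is a bridge, and it adds a positive edge to recreate a negative triangle.

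Now the counterexample to the statement. Note that $\lceil n/r\rceil=2$ exactly when $r+1\le n\le 2r$. Take $r=2$, $n=4$. Checking the few unbalanced signed graphs on four vertices with no positive triangle shows that the maximizer is $T_2^\star(4)$: the cycle $v_1av_2b$ plus the negative chord $v_1v_2$. Its index is $\lambda_1=(\sqrt{17}-1)/2$, with a strictly positive eigenvector. So $\Gamma'=T_2^\star(4)$ and it satisfies Claims 1--5. Yet $H=\Gamma'[\{a,b\}]$ is edgeless, i.e.\ it has one part, and both $v_1$ and $v_2$ are adjacent to all of $W$.

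More generally, for $T_r^\star(n)$ with $\lceil n/r\rceil=2$, the graph $H$ is complete $(r-1)$-partite and $v_1,v_2$ are complete to $W$. This has three consequences for your plan:
\begin{enumerate}
\item The ``separate maximality argument'' that $v_1$ and $v_2$ each miss a whole part of $H$ does not exist.
\item Splitting a part of size $2$ creates a positive $K_{r+1}$ consisting of $v_1$ and a transversal of the parts.
\item Your singleton case $n=r+1$ is not a contradiction; it is the extremal graph itself.
\end{enumerate}

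The paper's proof has the same flaw. It joins $v_1,v_2$ to every vertex of the $(p+1)$-partite graph $\Gamma_1$ and asserts $\mathcal{K}^+_{r+1}$-freeness ``since $p<r$''. That is valid only for $p\le r-2$. It escapes only at $n=r+1$, and only via the all-singletons subcase.

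What is actually provable at this point is a dichotomy:
\begin{itemize}
\item \emph{Case $p\le r-2$ is impossible.} First, $v_1,v_2$ can be made complete to $W$. Then some part has at least two vertices, since otherwise $n\le p+2\le r$. Splitting it keeps every clique avoiding $v_2$ of size at most $r$, and strictly increases $\lambda_1$, a contradiction.
\item \emph{Case $p=r-1$.} Adding a positive edge from $v_1$ or $v_2$ to $W$ creates no positive $K_{r+1}$. Such an edge strictly raises $\lambda_1$, by the argument of Claim 2, even if $x_1=0$. Hence $v_1,v_2$ are complete to $W$.
\end{itemize}
So either $H$ is complete $r$-partite, or $H$ is complete $(r-1)$-partite and the positive part $\Gamma'-v_1v_2$ is complete $r$-partite with $\{v_1,v_2\}$ as a part of size $2$. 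The subsequent claims must be run on this dichotomy, not on Claim 6 as stated.
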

	\noindent\emph{Proof of Claim 6.}~Set $\tilde{\Gamma}=\Gamma'[V(\Gamma^{\prime})\setminus \left\{v_1,v_2\right\}]$. We assert that for any two non-adjacent vertices $u,v \in V(\tilde{\Gamma})$, $N_{\tilde{\Gamma}}(u)=N_{\tilde{\Gamma}}(v)$. By contradiction, assume that there are two non-adjacent vertices $u$ and $v$ with $N_{\tilde{\Gamma}}(u)\ne N_{\tilde{\Gamma}}(v)$. Without loss of generality, let $w\in N_{\tilde{\Gamma}}(u)\setminus N_{\tilde{\Gamma}}(v)$.
	
	\noindent\emph{Case~1.}~$
	\sum_{v_k \in N_{\Gamma'}(v)}x_k<\sum_{v_k \in N_{\Gamma'}(u)}x_k$
	or
	$
	\sum_{v_k \in N_{\Gamma'}(v)}x_k<\sum_{v_k \in N_{\Gamma'}(w)}x_k$.
	
	If $\sum_{v_k \in N_{\Gamma'}(v)}x_k<\sum_{v_k \in N_{\Gamma'}(u)}x_k$, let $\Gamma^{*}=\Gamma'-\left\{vi \mid i\in N_{\Gamma'}(v)\right\}+\left\{vi \mid i\in N_{\Gamma'}(u)\right\}$. Obviously, $\Gamma^{*}$ is $\mathcal{K}_{r+1}^{+}$-free. We first assume that $\Gamma^{*}$ is unbalanced. We obtain that
	\begin{align*}
		\lambda_1(\Gamma^{*})-\lambda_1(\Gamma')&\ge \mathbf{x}^{T}A(\Gamma^{*})\mathbf{x}-\mathbf{x}^{T}A(\Gamma')\mathbf{x}\\
		&=2x_v\sum_{v_k \in N_{\Gamma'}(u)}x_k-2x_v\sum_{v_k \in N_{\Gamma'}(v)}x_k\\
		&>0.
	\end{align*}
	This contradicts the maximality of $\lambda_1(\Gamma^{\prime})$. When $\sum_{v_k \in N_{\Gamma'}(v)}x_k<\sum_{v_k \in N_{\Gamma'}(w)}x_k$, the proof is similar. If $\Gamma^{*}$ is balanced, this implies that the negative edge $v_1 v_2$ is a cut edge of $\Gamma^{*}$. Since $\Gamma^{*}$ is connected, suppose that \(v_1v_3\in E(\Gamma')\) or $v_2v_4\in E(\Gamma')$. Without loss of generality, \(v_1v_3\in E(\Gamma')\). Recall that $\mathbf{x}$ has at most one zero coordinate. Then we obtain that
	\[
	\lambda_{1}(\Gamma^{*}+v_2 v_3)>\lambda_{1}(\Gamma^{*})>\lambda_{1}(\Gamma').
	\]
	Since $\Gamma^{*}+v_2 v_3$ is unbalanced and $\mathcal{K}_{r+1}^{+}$-free, this contradicts the maximality of $\lambda_1(\Gamma^{\prime})$.
	
	\noindent\emph{Case~2.}~$
	\sum_{v_k \in N_{\Gamma'}(v)}x_k\ge \sum_{v_k \in N_{\Gamma'}(u)}x_k$
	and
	$
	\sum_{v_k \in N_{\Gamma'}(v)}x_k\ge \sum_{v_k \in N_{\Gamma'}(w)}x_k$.
	
	Let $\Gamma^{''}=\Gamma'-\left\{ui \mid i\in N_{\Gamma'}(u)\right\}-\left\{wi \mid i\in N_{\Gamma'}(w)\right\}+\left\{ui \mid i\in N_{\Gamma'}(v)\right\}+\left\{wi \mid i\in N_{\Gamma'}(v)\right\}$. Obviously, $\Gamma^{''}$ is $\mathcal{K}_{r+1}^{+}$-free. We first assume that $\Gamma^{''}$ is unbalanced. By Claim \ref{At most one component of x is zero}, $x_u >0$ and $x_w>0$. By the Rayleigh principle,
	\begin{align*}
		&\lambda_1(\Gamma^{''})-\lambda_1(\Gamma')\\
		&\ge \mathbf{x}^{T}A(\Gamma^{''})\mathbf{x}-\mathbf{x}^{T}A(\Gamma')\mathbf{x}\\
		&=2x_u\sum_{v_k \in N_{\Gamma'}(v)}x_k-2x_u\sum_{v_k \in N_{\Gamma'}(u)}x_k+2x_w\sum_{v_k \in N_{\Gamma'}(v)}x_k-2x_w\sum_{v_k \in N_{\Gamma'}(w)}x_k+2x_u x_w\\
		&> 0.
	\end{align*}
	This contradicts the maximality of $\lambda_1(\Gamma^{\prime})$.
	If $\Gamma^{''}$ is balanced, this implies that the negative edge $v_1 v_2$ is a cut edge of $\Gamma^{''}$. Since $\Gamma^{''}$ is connected, suppose that \(v_1v_3\in E(\Gamma')\) or $v_2v_4\in E(\Gamma')$. Without loss of generality, \(v_1v_3\in E(\Gamma')\). Recall that $\mathbf{x}$ has at most one zero coordinate. Then we obtain that
	\[
	\lambda_{1}(\Gamma^{''}+v_2 v_3)>\lambda_{1}(\Gamma^{''})>\lambda_1(\Gamma^{\prime}).
	\]
	Since $\Gamma^{''}+v_2 v_3$ is unbalanced and $\mathcal{K}_{r+1}^{+}$-free, this contradicts the maximality of $\lambda_1(\Gamma^{\prime})$.
	
	From the above discussion, we conclude that $\tilde{\Gamma}$ is a complete $p$-partite signed graph with all positive edges. Since $\Gamma'$ is $\mathcal{K}_{r+1}^{+}$-free, we have $p\le r$.

	Suppose that $p<r$. In this time, we assert that \(v_1\) and \(v_2\) are adjacent to all vertices in $\tilde{\Gamma}$. Otherwise, assume \(v_1\) is nonadjacent to $w$ where $w\in V(\tilde{\Gamma})$. Similar to the proof of Claim \ref{connected-1}, we have $\lambda_{1}(\Gamma'+v_1 w)>\lambda_{1}(\Gamma')$ which contradicts the maximality of $\lambda_1(\Gamma^{\prime})$. Hence, \(v_1\) and \(v_2\) are adjacent to all vertices in $\tilde{\Gamma}$. Suppose that the $p$ parts of $\tilde{\Gamma}$ are $V_1,V_2,\dots,V_p$. If $|V_i|\le 1$ for any $i\in [p]$, then $n\le 2+p\le 2+r-1=r+1$. Since $n\ge r+1$, we have $n=r+1$ and $p=r-1$. Then $|V_1|=|V_2|=\dots=|V_p|=1$. Hence, $\Gamma'=T_{r}^\star(n)$. In this case, Theorem \ref{Classical spectral Turan for s-g} holds. If there exists some $i$ such that $|V_i|\ge 2$, without loss of generality, let $V_1$ be the partition set with at least two vertices in it and $u, v\in  V_1$. We can construct a complete $(p+1)$-partite graph $\Gamma_1$ with partition $u, V_1\setminus\{u\}, V_2, \dots, V_p$. Let $\Gamma_2$ be obtained by adding positive edges from \(v_1\) and \(v_2\) to all vertices in $\Gamma_1$. It is clear that $\lambda_1(\Gamma^{\prime})<\lambda_1(\Gamma_2)$. And since $p<r$, $\Gamma_2$ is also $\mathcal{K}_{r+1}^{+}$-free. This is a contradiction. This implies that \(p=r\).

	\begin{claim}\label{complete r-partite-one negative edge}
		$\Gamma'$ is obtained by adding one negative edge to a complete $r$-partite signed graph with all positive edges.
	\end{claim}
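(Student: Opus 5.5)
We already know from Claim \ref{complete r-partite} that $\tilde{\Gamma}=\Gamma'[V(\Gamma')\setminus\{v_1,v_2\}]$ is a complete $r$-partite all-positive signed graph with parts $V_1,\dots,V_r$. Moreover, $v_1v_2$ is the unique negative edge of $\Gamma'$ (Claims \ref{negative edge} and \ref{one negative edge-1}). The plan is therefore to show that $v_1$ and $v_2$ can be placed into parts of $\tilde{\Gamma}$ so that their positive neighbourhoods are exactly the complements of those parts. Then $\Gamma'$ minus the negative edge is a complete $r$-partite all-positive graph.

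The first step is to locate each of $v_1,v_2$ relative to the parts. Since $\Gamma'$ is $\mathcal{K}^+_{r+1}$-free and all edges other than $v_1v_2$ are positive, $v_1$ cannot have a positive neighbour in every $V_i$: otherwise $v_1$ together with one vertex from each part forms an all-positive $K_{r+1}$, which is balanced. So for each $a\in\{1,2\}$ there is an index $i_a$ with $N_{\Gamma'}(v_a)\cap V_{i_a}=\emptyset$.

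The second step is to show $N_{\Gamma'}(v_a)\setminus\{v_{3-a}\}\supseteq V(\tilde{\Gamma})\setminus V_{i_a}$. Suppose $v_1$ is nonadjacent to some $w\in V_j$ with $j\ne i_1$. Add the positive edge $v_1w$. The result stays unbalanced, since $v_1v_2$ remains negative and every negative cycle is preserved. It also stays $\mathcal{K}^+_{r+1}$-free. To see this, note that a balanced $K_{r+1}$ using $v_1$ cannot use $v_2$: the triangle $v_1v_2z$ would need $v_2z$ negative, which is impossible. So such a clique consists of $v_1$ together with an $r$-clique of $\tilde{\Gamma}$ inside $N(v_1)$. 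That clique needs a vertex from each of the $r$ parts, but $V_{i_1}$ contributes no neighbour of $v_1$. By Claim \ref{At most one component of x is zero} and $x_1\ge0$, the Rayleigh quotient increases by $2x_1x_w\ge 0$. If the increase is $0$, then $\mathbf{x}$ is also an eigenvector of the new graph, and comparing the eigen-equations at $v_1$ and $w$ forces $x_1=x_w=0$. This contradicts Claim \ref{eigenvector of one zero coordinate}, exactly as in Claim \ref{connected-1}. So we get a strict increase, contradicting maximality. The same argument applies to $v_2$ (with $x_2>0$).

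The third step: for $a\in\{1,2\}$, $v_a$ is adjacent to all of $V(\tilde{\Gamma})\setminus V_{i_a}$ and to nothing in $V_{i_a}$. Hence $v_a$ has the same neighbourhood in $\tilde{\Gamma}$ as the vertices of $V_{i_a}$, and we may place $v_a$ into $V_{i_a}$. If $i_1=i_2$, then removing the negative edge leaves a complete $r$-partite all-positive graph, and $v_1v_2$ is a negative edge inside one part, as claimed.

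The main obstacle is the case $i_1\ne i_2$. Here the underlying graph is complete $r$-partite with $v_1v_2$ joining two different parts, so all edges go between parts. We must check that this fits the claim, or else rule it out. The signed graph is then unbalanced and $r$-partite, which is exactly the setting of Lemma \ref{Cai-Zhou-ar-2026}: its index is at most $\lambda_1(\Gamma_n(r))$. I would compare this with $\lambda_1(T_r^\star(n))$, using an eigen-equation computation analogous to Lemma \ref{eigenvalue of A(Sigma)}, and aim to show $\lambda_1(\Gamma_n(r))<\lambda_1(T_r^\star(n))$, contradicting (\ref{Index of signed Turan graph-1}). If that comparison is hard to do exactly, a fallback is a local move. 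Move $v_2$ into $V_{i_1}$ by deleting its positive edges to $V_{i_1}$ and adding positive edges to $V_{i_2}$. Then pick the move direction by comparing $\sum_{V_{i_1}}x$ and $\sum_{V_{i_2}}x$, using the edge-switch Rayleigh argument already used in Claim \ref{complete r-partite}. Either way we reduce to $i_1=i_2$, which proves the claim.
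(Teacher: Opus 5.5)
Your Steps 1--3 match the opening of the paper's proof, and your fallback for $i_1\ne i_2$ is exactly the paper's neighbourhood replacement $\Gamma^*$. Your steps are in fact more careful than the paper's: you check that adding $v_1w$ preserves $\mathcal{K}^+_{r+1}$-freeness, and you derive full adjacency to $r-1$ parts from maximality, where the paper only says ``we can always add positive edges''. However, the case $i_1\ne i_2$ is the whole content of the claim, and there you give only a plan. Your first option does not work as stated. The inequality $\lambda_1(\Gamma_n(r))<\lambda_1(T_r^\star(n))$ is not available at this point: in the paper it follows only from the uniqueness part of Theorem~\ref{Classical spectral Turan for s-g}, whose proof passes through this very claim. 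You would need an independent secular-equation computation for $\Gamma_n(r)$, which you do not give. The fallback is the right route, but its strictness step is not routine here, and you omit it.

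Concretely, put $S_a=\sum_{v_k\in N_{\Gamma'}(v_a)\setminus\{v_{3-a}\}}x_k$, so that $S_2-S_1=\sum_{V_{i_1}}x-\sum_{V_{i_2}}x$.
\begin{itemize}
\item \emph{Direction of the move.} The eigen-equations $\lambda_1x_1=-x_2+S_1$ and $\lambda_1x_2=-x_1+S_2$, together with $x_1\le x_2$, give $S_2-S_1=(\lambda_1-1)(x_2-x_1)\ge 0$. So the non-decreasing move gives $v_1$, the endpoint with the smaller entry, the neighbourhood of $v_2$. Its Rayleigh gain is $2x_1(\lambda_1-1)(x_2-x_1)$. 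Your ``compare the sums'' rule selects exactly this move. The paper instead moves $v_2$ and writes this identity with the sign reversed, so your formulation is the correct one.
\item \emph{Why strictness is not automatic.} The gain can be zero, and $x_1=0$ is not excluded by Claims~\ref{eigenvector of one zero coordinate} and~\ref{At most one component of x is zero}. In that case, comparing eigen-equations at vertices of $V_{i_1}$ or $V_{i_2}$ yields only $x_1=0$, which is no contradiction.
\item \emph{The missing step.} If $\lambda_1$ does not increase, then $\mathbf{x}$ is an eigenvector of the new graph. Compare the eigen-equation at $v_1$ itself in both graphs: this gives $S_1=S_2$, hence $x_1=x_2$. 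Here you use $\lambda_1(\Gamma')>1$, which follows from \eqref{Index of signed Turan graph-1} since $n\ge r+2\ge 4$ in this case. Now compare at a vertex of $V_{i_1}$, which is nonempty since $p=r$; this gives $x_1=0$. Hence $x_1=x_2=0$, two zero coordinates, contradicting Claim~\ref{eigenvector of one zero coordinate}.
\item \emph{Admissibility of the new graph.} Record that the new graph is unbalanced and $\mathcal{K}^+_{r+1}$-free. The negative edge now lies inside a part, and, as in your Step~2, no balanced clique contains both $v_1$ and $v_2$.
\end{itemize}
With these additions your argument proves the claim.
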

	\noindent\emph{Proof of Claim 7.}~By Claim \ref{complete r-partite}, $\Gamma'[V(\Gamma^{\prime})\setminus \left\{v_1,v_2\right\}]$ is a complete $r$-partite signed graph with all positive edges. Recall that $\Gamma'$ is $\mathcal{K}_{r+1}^{+}$-free. Hence, \(v_1\) is adjacent to vertices from at most \(r-1\) parts. By the maximality of $\lambda_{1}(\Gamma')$, we can always add positive edges so that \(v_1\) is adjacent to all vertices in \(r-1\) parts. The same holds for \(v_2\). We now prove that \(v_1\) and \(v_2\) lie in the same part. We first suppose that $\Gamma'$ is a complete $r$-partite signed graph with exactly one negative edge. Suppose that the $r$ parts of $\Gamma'$ are $V_1,V_2,\dots,V_r$, with $v_1\in V_1$ and $v_2\in V_2$. Let $\Gamma^{*}=\Gamma'-\left\{v_k v_2 \mid v_k \in N_{\Gamma'}(v_2)\setminus \left\{v_1\right\}\right\}+\left\{v_k v_2 \mid v_k \in N_{\Gamma'}(v_1)\setminus \left\{v_2\right\}\right\}$. For $\Gamma'$, we have
	\[
	\lambda_{1}(\Gamma')x_1=-x_2+\sum_{v_k \in N_{\Gamma'}(v_1)\setminus\left\{v_2\right\}}x_k,
	\]
	and
	\[
	\lambda_{1}(\Gamma')x_2=-x_1+\sum_{v_k \in N_{\Gamma'}(v_2)\setminus\left\{v_1\right\}}x_k.
	\]
	Recall that $x_2\ge x_1$. Hence,
	\[
	(\lambda_{1}(\Gamma')-1)(x_2-x_1)=\sum_{v_k \in N_{\Gamma'}(v_1)\setminus\left\{v_2\right\}}x_k-\sum_{v_k \in N_{\Gamma'}(v_2)\setminus\left\{v_1\right\}}x_k\ge 0.
	\]
	Then
	\begin{align*}
		\lambda_1(\Gamma^{*})-\lambda_1(\Gamma')&\ge \mathbf{x}^{T}A(\Gamma^{*})\mathbf{x}-\mathbf{x}^{T}A(\Gamma')\mathbf{x}\\
		&=2x_{2}\left(\sum_{v_k \in N_{\Gamma'}(v_1)\setminus\left\{v_2\right\}}x_k-\sum_{v_k \in N_{\Gamma'}(v_2)\setminus\left\{v_1\right\}}x_k\right)\\
		&\ge 0.
	\end{align*}
	If $\lambda_1(\Gamma^{*})=\lambda_1(\Gamma^{\prime})$, then $\mathbf{x}$ is an eigenvector of $A(\Gamma^{*})$ corresponding to $\lambda_1(\Gamma^{*})$ and $\sum_{v_k \in N_{\Gamma'}(v_1)\setminus\left\{v_2\right\}}x_k=\sum_{v_k \in N_{\Gamma'}(v_2)\setminus\left\{v_1\right\}}x_k$. Hence, $x_1=x_2$. Note that for $v_i\in V_1\setminus \left\{v_1\right\}$,
	\[
	\lambda_1(\Gamma')x_i=x_2+\sum_{v_k \in N_{\Gamma'}(v_i)\setminus\left\{v_2\right\}}x_k
	\]
	and
	\[
	\lambda_1(\Gamma^{*})x_i=\sum_{v_k \in N_{\Gamma'}(v_i)\setminus\left\{v_2\right\}}x_k.
	\]
	Hence, we have $x_1=x_2=0$ which contradicts Claim \ref{eigenvector of one zero coordinate}. So, \(v_1\) and \(v_2\) lie in the same part.
	
	\begin{claim}\label{add one negative edge to T_{r}(n)}
		$\Gamma'$ is obtained by adding one negative edge to $T_{r}(n)$.
	\end{claim}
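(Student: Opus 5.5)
By Claim~\ref{complete r-partite-one negative edge}, $\Gamma'$ consists of a complete $r$-partite all-positive signed graph $K_{n_1,\dots,n_r}$ with parts $V_1,\dots,V_r$, plus one negative edge $v_1v_2$ lying inside one part, say $V_j$. It remains to show that the part sizes are balanced, i.e.\ $|n_i-n_l|\le 1$ for all $i,l$, so that the underlying partite graph is $T_r(n)$. My plan is a vertex-moving argument in the style of the classical proof of Nikiforov's theorem. I will compare part sizes and show that moving a suitable vertex from a large part to a small part strictly increases $\lambda_1$, which contradicts maximality.

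First I will record the structure of $\mathbf{x}$. By Claim~\ref{At most one component of x is zero}, every $x_i$ with $i\ge 3$ is positive. Within a part $V_i\ne V_j$ all vertices have the same neighbourhood, so their coordinates are equal; call the common value $y_i$. From the eigen-equation, $\lambda_1 y_i = S - n_i y_i$, where $S=\sum_k x_k$. Hence $y_i = S/(\lambda_1+n_i)$, and larger parts carry strictly smaller entries.

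Now suppose $n_a\ge n_b+2$ for some parts $V_a,V_b$. If $V_a\ne V_j$, choose any $w\in V_a$. If $V_a=V_j$, choose $w\in V_j\setminus\{v_1,v_2\}$, which exists because $n_a\ge 3$. Move $w$ to $V_b$: delete its edges to $V_b$ and add edges to $V_a\setminus\{w\}$, all positive. The result is still unbalanced, since the negative edge is untouched and still lies inside a part. It is also $\mathcal{K}^+_{r+1}$-free, since it is $r$-partite plus one negative edge. The Rayleigh quotient changes by
\[
2x_w\Bigl(\sum_{V_a\setminus\{w\}}x_k-\sum_{V_b}x_k\Bigr).
\]
When neither part is $V_j$, this equals $2x_w\bigl((n_a-1)y_a-n_by_b\bigr)$. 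Using $y_i=S/(\lambda_1+n_i)$, the inequality $(n_a-1)/(\lambda_1+n_a)\ge n_b/(\lambda_1+n_b)$ reduces to $\lambda_1(n_a-1-n_b)\ge n_b$. I will verify this using $\lambda_1\ge L-O(1)$ from \eqref{Index of signed Turan graph-1}; it may need a separate small check when $n_a-1-n_b=1$ and $n_b$ is close to $\lambda_1$. Even if the difference is only $\ge 0$, equality of $\lambda_1$ would force $\mathbf{x}$ to be an eigenvector of the new graph. Comparing the eigen-equations at a vertex of $V_b$ then forces $x_w=0$, contradicting Claim~\ref{At most one component of x is zero}. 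So the inequality is strict.

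The main obstacle is the case where $V_j$ is one of the two parts. There the coordinates are no longer uniform: by the computation in Lemma~\ref{eigenvalue of A(Sigma)}, $x_{v_1}=x_{v_2}=\frac{\lambda_1}{\lambda_1+1}x_l$ for the other vertices $x_l$ of $V_j$. This gives $\sum_{V_j}x_k = S g_k(\lambda_1)$ in place of $S h_k(\lambda_1)$. I would handle it with the same comparison, now using the explicit formulas. If $V_a=V_j$, the moved vertex $w$ is an ordinary vertex with $x_w=x_l$, and the sum over $V_a\setminus\{w\}$ equals $S g_{n_a}(\lambda_1)-x_l$; I then compare this with $n_by_b$. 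If $V_b=V_j$, the quantity to beat is $Sg_{n_b}(\lambda_1)$, which is smaller than $Sh_{n_b}(\lambda_1)$ because $\Delta_k>0$ (see the proof of Theorem~\ref{eigenvalue-T_{r}^{+-}(n)}); so this case follows from the uniform one a fortiori. For $V_a=V_j$, I would first try bounding $x_{v_1},x_{v_2}\ge x_l/2$ (which holds since $\lambda_1\ge1$) and reducing to an inequality of the form $\lambda_1(n_a-1-n_b)\ge n_b+O(1)$. If that is too lossy, I would instead argue directly with $\Phi$-type functions, comparing the secular equations of the two configurations as in Lemma~\ref{eigenvalue of A(Sigma)}. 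Once no two parts differ by more than one, the underlying partite graph is $T_r(n)$, which proves the claim.
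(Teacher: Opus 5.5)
\paragraph{There is a gap in the case $V_a=V_j$.} Your treatment of two ordinary parts, and of the case $V_b=V_j$, is fine. The inequality $\lambda_1>n_b$ you were unsure about follows at once from \eqref{Index of signed Turan graph-1}, since $n_b\le\frac{n-2}{2}$ while $\lambda_1>\frac n2-1$.

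The gap is the case $V_a=V_j$, which you leave open, and your first plan there cannot work. Put $m=n_a-\frac{2}{\lambda_1+1}$. Then $x_l=S/(\lambda_1+m)$ and $\sum_{V_a\setminus\{w\}}x_k=(n_a-3)x_l+2x_{v_1}=(m-1)x_l$, so the exact Rayleigh gain of moving $w$ is
\[
2x_lS\Bigl(\frac{m-1}{\lambda_1+m}-\frac{n_b}{\lambda_1+n_b}\Bigr).
\]
Its sign is that of $(m-1-n_b)\lambda_1-n_b$. For $n_a=n_b+2$ this becomes $\lambda_1\frac{\lambda_1-1}{\lambda_1+1}-n_b$. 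That quantity is negative whenever $r=2$, because then $\lambda_1\le n/2=n_b+1$ by Lemma \ref{Wang-Yan-Qian-LAA-2021}.

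For instance, take $n=4$ with parts $\{v_1,v_2,w\}$ and $\{z\}$. Then $\lambda_1\approx1.48$ and the gain is $2x_wx_z\bigl(\frac{2}{\lambda_1+1}-1\bigr)<0$. Yet the moved graph $T_2^\star(4)$ has index $\frac{\sqrt{17}-1}{2}\approx1.56$. So no estimate of $x_{v_1},x_{v_2}$ can rescue this comparison.

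The paper's Case 2, which is exactly this case, has the same problem. Its gain $2x_l[(n_1-2+\frac{2\lambda_1}{\lambda_1+1})x_l-n_jx_j]$ counts $v_l$ among its own new neighbours. With the correct coefficient $n_1-3+\frac{2\lambda_1}{\lambda_1+1}$, its computation gives exactly the sign of $\lambda_1\frac{\lambda_1-1}{\lambda_1+1}-n_j$ when $n_1-n_j=2$.

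\paragraph{Your fallback is the right repair.} It settles all cases uniformly. Since $g_k(t)=h_{k-2/(t+1)}(t)$, the part containing $v_1v_2$ acts like an ordinary part of effective size $n_j-\frac{2}{t+1}$. For real $p$ one has $h_{p+1}(t)-h_p(t)=\frac{t}{(t+p)(t+p+1)}$. Hence moving an ordinary vertex from a part of effective size $\mu_a$ to one of effective size $\mu_b$ changes the secular function by
\[
\frac{t}{(t+\mu_b)(t+\mu_b+1)}-\frac{t}{(t+\mu_a-1)(t+\mu_a)},
\]
which is positive if and only if $\mu_a-1>\mu_b$.

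If $n_a\ge n_b+2$ and $t=\lambda_1>1$ (again by \eqref{Index of signed Turan graph-1}, as $n\ge4$), this holds in all three cases. For $V_a=V_j$ it reads $n_a-n_b-1>\frac{2}{t+1}$. The proof of Lemma \ref{eigenvalue of A(Sigma)} works for arbitrary part sizes, so the old secular function equals $1$ at $\lambda_1$ (because $S>0$). The new secular function exceeds $1$ there, is strictly decreasing on $t>1$, and tends to $0$. Its root is therefore an eigenvalue of the moved graph larger than $\lambda_1$, and the moved graph is still unbalanced and $\mathcal{K}^+_{r+1}$-free. As written, though, your proposal only names this route, so the claim is not yet proved.
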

	\noindent\emph{Proof of Claim 8.}~By Claim \ref{complete r-partite-one negative edge}, $\Gamma'$ is obtained by adding one negative edge to a complete $r$-partite signed graph with all positive edges. Let $V(\Gamma') = V_1 \cup \cdots \cup V_r$ be the vertex partition of $\Gamma'$ and the size of each part be $n_1,n_2,\dots,n_r$, respectively. Without loss of generality, assume $v_1,v_2\in V_1$. Since all vertices in the same part other than \(v_1\) and \(v_2\) have the same neighborhood, by the property of an eigenvector, all vertices in the same part other than \(v_1\) and \(v_2\) have the component with the same value in $\mathbf{x}$. If there exists a pair of parts with the difference in sizes at least $2$, without loss of generality, let $n_i-n_j\ge 2$.
	
	\noindent\emph{Case~1.}~$i,j\ne 1$. For $v_i\in V_i$ and $v_j\in V_j$, by Claim \ref{At most one component of x is zero}, $x_i>0$ and $x_j>0$. We have
	\[
	\lambda_{1}(\Gamma')x_i=n_j x_j+B,~~\lambda_{1}(\Gamma')x_j=n_i x_i+B
	\]
	where $B=\sum_{w\in V(\Gamma')\setminus \left(V_i\cup V_j\right)}x_w$. By direct calculation,
	\begin{equation}\label{x_j-x_i}
		(\lambda_{1}(\Gamma')+n_i)x_i=(\lambda_{1}(\Gamma')+n_j)x_j.
	\end{equation}
	Since $n_i>n_j$, we have $x_i<x_j$. Hence,
	\[
	\lambda_{1}(\Gamma')=n_j \frac{x_j}{x_i}+\frac{B}{x_i}>n_j.
	\]
	Let $\Gamma^{*}=\Gamma'-\left\{v_i v_k \mid v_k\in N_{\Gamma'}(v_i)\right\}+\left\{v_i v_k \mid v_k\in N_{\Gamma'}(v_j)\setminus \left\{v_i\right\}\right\}$. Obviously, $\Gamma^{*}$ is unbalanced and $\mathcal{K}_{r+1}^{+}$-free. We have
	\[
	\lambda_1(\Gamma^{*})-\lambda_1(\Gamma')\ge 2x_i\left[(n_i-1)x_i-n_j x_j\right].
	\]
	However, by \eqref{x_j-x_i},
	\begin{align*}
		&\left[(n_i-1)x_i-n_j x_j\right]\left[(\lambda_{1}(\Gamma')+n_i)x_i\right]\\
		&=(n_i-1)x_i(\lambda_{1}(\Gamma')+n_j)x_j-n_j x_j(\lambda_{1}(\Gamma')+n_i)x_i\\
		&=x_i x_j\left[(n_i-n_j-1)\lambda_{1}(\Gamma')-n_j\right]\\
		&\ge x_i x_j(\lambda_{1}(\Gamma')-n_j)\\
		&>0.
	\end{align*}
	Hence, $(n_i-1)x_i-n_j x_j>0$ which implies that $\lambda_1(\Gamma^{*})>\lambda_1(\Gamma')$. This contradicts the maximality of $\lambda_1(\Gamma^{\prime})$.
	
	\noindent\emph{Case~2.}~$i=1$, $j\ne 1$. Since $n_1-n_j\ge 2$ and $n_j\ge 1$, we have $n_1\ge 3$. For $v_l\in V_1 \setminus \left\{v_1,v_2\right\}$ and $v_j\in V_j$, by Claim \ref{At most one component of x is zero}, $x_l>0$ and $x_j>0$. By symmetry, $x_1=x_2$. Note that
	\[
	\lambda_{1}(\Gamma')x_1=-x_2+\sum_{v_k \in N_{\Gamma}(v_1)\setminus\left\{v_2\right\}}x_k
	\]
	and
	\[
	\lambda_{1}(\Gamma')x_l=\sum_{v_k \in N_{\Gamma}(v_1)\setminus\left\{v_2\right\}}x_k.
	\]
	Hence, $x_1=x_2=\frac{\lambda_{1}(\Gamma')}{\lambda_{1}(\Gamma')+1}x_l$. Note that
	\[
	\lambda_{1}(\Gamma')x_l=n_j x_j+B_1,~~\lambda_{1}(\Gamma')x_j=(n_1-2) x_l+x_1+x_2+B_1
	\]
	where $B_1=\sum_{w\in V(\Gamma')\setminus \left(V_1\cup V_j\right)}x_w$. By direct calculation,
	\begin{equation}\label{x_j-x_i-1}
		\left(\lambda_{1}(\Gamma')+n_1-2+\frac{2\lambda_{1}(\Gamma')}{\lambda_{1}(\Gamma')+1}\right)x_l=(\lambda_{1}(\Gamma')+n_j)x_j.
	\end{equation}
	Let $\Gamma^{**}=\Gamma'-\left\{v_l v_k \mid v_k\in N_{\Gamma'}(v_l)\right\}+\left\{v_l v_k \mid v_k\in N_{\Gamma'}(v_j)\setminus \left\{v_l\right\}\right\}$. Obviously, $\Gamma^{**}$ is unbalanced and $\mathcal{K}_{r+1}^{+}$-free. We have
	\[
	\lambda_1(\Gamma^{*})-\lambda_1(\Gamma')\ge 2x_l\left[\left(n_1-2+\frac{2\lambda_{1}(\Gamma')}{\lambda_{1}(\Gamma')+1}\right)x_l-n_j x_j\right].
	\]
	However, by \eqref{x_j-x_i-1},
	\begin{align*}
		&\left[\left(n_1-2+\frac{2\lambda_{1}(\Gamma')}{\lambda_{1}(\Gamma')+1}\right)x_l-n_j x_j\right](\lambda_{1}(\Gamma')+n_j)x_j\\
		&=x_l x_j\left(n_1-2+\frac{2\lambda_{1}(\Gamma')}{\lambda_{1}(\Gamma')+1}\right)(\lambda_{1}(\Gamma')+n_j)-n_jx_l x_j\left(\lambda_{1}(\Gamma')+n_1-2+\frac{2\lambda_{1}(\Gamma')}{\lambda_{1}(\Gamma')+1}\right)\\
		&=\lambda_{1}(\Gamma')x_l x_j\left(n_1-n_j-2+\frac{2\lambda_{1}(\Gamma')}{\lambda_{1}(\Gamma')+1}\right)\\
		&\ge 2x_l x_j\frac{\lambda^2_{1}(\Gamma')}{\lambda_{1}(\Gamma')+1}\\
		&>0.
	\end{align*}
	This implies $\lambda_1(\Gamma^{**})>\lambda_1(\Gamma')$ which contradicts the maximality of $\lambda_1(\Gamma^{\prime})$. Hence, $n_1=2$.
	
	\noindent\emph{Case~3.}~$i\ne 1$, $j=1$ and $n_1\ge 3$. For $v_l\in V_1 \setminus \left\{v_1,v_2\right\}$ and $v_i\in V_i$, by Claim \ref{At most one component of x is zero}, $x_l>0$ and $x_i>0$. Similar to Case $2$, we have
	$x_1=x_2=\frac{\lambda_{1}(\Gamma')}{\lambda_{1}(\Gamma')+1}x_l$,
	\begin{equation}\label{x_l-x_i}
		\lambda_{1}(\Gamma')x_l=n_i x_i+B_2,~~\lambda_{1}(\Gamma')x_i=(n_1-2) x_l+x_1+x_2+B_2
	\end{equation}
	where $B_2=\sum_{w\in V(\Gamma')\setminus \left(V_1\cup V_i\right)}x_w$. By direct calculation,
	\begin{equation}\label{x_j-x_i-2}
		\left(\lambda_{1}(\Gamma')+n_1-2+\frac{2\lambda_{1}(\Gamma')}{\lambda_{1}(\Gamma')+1}\right)x_l=(\lambda_{1}(\Gamma')+n_i)x_i.
	\end{equation}
	Let $\Gamma^{''}=\Gamma'-\left\{v_i v_k \mid v_k\in N_{\Gamma'}(v_i)\right\}+\left\{v_i v_k \mid v_k\in N_{\Gamma'}(v_l)\setminus \left\{v_i\right\}\right\}$. Obviously, $\Gamma^{''}$ is unbalanced and $\mathcal{K}_{r+1}^{+}$-free. We have
	\[
	\lambda_1(\Gamma^{''})-\lambda_1(\Gamma')\ge 2x_i\left[(n_i-1) x_i-\left(n_1-2+\frac{2\lambda_{1}(\Gamma')}{\lambda_{1}(\Gamma')+1}\right)x_l\right].
	\]
	However, by \eqref{x_j-x_i-2},
	\begin{align*}
		&\left[(n_i-1) x_i-\left(n_1-2+\frac{2\lambda_{1}(\Gamma')}{\lambda_{1}(\Gamma')+1}\right)x_l\right](\lambda_{1}(\Gamma')+n_i)x_i\\
		&=x_i x_l(n_i-1)\left(\lambda_{1}(\Gamma')+n_1-2+\frac{2\lambda_{1}(\Gamma')}{\lambda_{1}(\Gamma')+1}\right)-x_i x_l\left(n_1-2+\frac{2\lambda_{1}(\Gamma')}{\lambda_{1}(\Gamma')+1}\right)(\lambda_{1}(\Gamma')+n_i)\\
		&=x_i x_j\left[\left(n_i-n_1+2+\frac{2\lambda_{1}(\Gamma')}{\lambda_{1}(\Gamma')+1}\right)\lambda_{1}(\Gamma')-\left(n_1-2+\frac{2\lambda_{1}(\Gamma')}{\lambda_{1}(\Gamma')+1}\right)\right]\\
		&\ge x_i x_j\left[\left(4+\frac{2\lambda_{1}(\Gamma')}{\lambda_{1}(\Gamma')+1}\right)\lambda_{1}(\Gamma')-\left(n_1-2+\frac{2\lambda_{1}(\Gamma')}{\lambda_{1}(\Gamma')+1}\right)\right].
	\end{align*}
	
	If $x_l\ge x_i$, by \eqref{x_l-x_i}, then
	\[
	\lambda_{1}(\Gamma')=\left(n_1-2+\frac{2\lambda_{1}(\Gamma')}{\lambda_{1}(\Gamma')+1}\right)\frac{x_l}{x_i}+\frac{B_2}{x_i}\ge n_1-2+\frac{2\lambda_{1}(\Gamma')}{\lambda_{1}(\Gamma')+1}.
	\]
	Clearly, $\left(4+\frac{2\lambda_{1}(\Gamma')}{\lambda_{1}(\Gamma')+1}\right)\lambda_{1}(\Gamma')>n_1-2+\frac{2\lambda_{1}(\Gamma')}{\lambda_{1}(\Gamma')+1}$.
	
	If $x_l< x_i$, by \eqref{x_l-x_i}, then
	\[
	\lambda_{1}(\Gamma')=n_i \frac{x_i}{x_l}+\frac{B_2}{x_l}>n_i.
	\]
	We have
	\[
	\left(4+\frac{2\lambda_{1}(\Gamma')}{\lambda_{1}(\Gamma')+1}\right)\lambda_{1}(\Gamma')>n_i\left(4+\frac{2\lambda_{1}(\Gamma')}{\lambda_{1}(\Gamma')+1}\right)>n_1-2+\frac{2\lambda_{1}(\Gamma')}{\lambda_{1}(\Gamma')+1}.
	\]
	Therefore, we all have
	\[
	\left(4+\frac{2\lambda_{1}(\Gamma')}{\lambda_{1}(\Gamma')+1}\right)\lambda_{1}(\Gamma')>n_1-2+\frac{2\lambda_{1}(\Gamma')}{\lambda_{1}(\Gamma')+1}.
	\]
	This implies $\lambda_1(\Gamma^{''})>\lambda_1(\Gamma')$ which contradicts the maximality of $\lambda_1(\Gamma^{\prime})$.
	
	\noindent\emph{Case~4.}~$i\ne 1$, $j=1$ and $n_1=2$. Similar to Case $3$, for $v_i\in V_i$, we have
	\begin{equation}\label{x_i-x_1}
		\lambda_{1}(\Gamma')x_1=-x_2+n_i x_i+B_2,~~\lambda_{1}(\Gamma')x_i=x_1+x_2+B_2
	\end{equation}
	where $B_2=\sum_{w\in V(\Gamma')\setminus \left(V_1\cup V_i\right)}x_w$. By direct calculation,
	\begin{equation}\label{x_i-x_1-2}
		(\lambda_{1}(\Gamma')+3)x_1=(\lambda_{1}(\Gamma')+n_i)x_i.
	\end{equation}
	Note that $n_i-n_j\ge 2$. Then $n_i\ge 4$. By \eqref{x_i-x_1-2}, we have $x_1>x_i$. Let $\dot{\Gamma}=\Gamma'-\left\{v_i v_k \mid v_k\in N_{\Gamma'}(v_i)\right\}+\left\{v_i v_k \mid v_k\in N_{\Gamma'}(v_1)\setminus \left\{v_i\right\}\right\}$. Obviously, $\dot{\Gamma}$ is unbalanced and $\mathcal{K}_{r+1}^{+}$-free. We have
	\[
	\lambda_1(\dot{\Gamma})-\lambda_1(\Gamma')\ge 2x_i\left[(n_i-1) x_i-2x_1\right].
	\]
	However, by \eqref{x_i-x_1-2},
	\begin{align*}
		&\left[(n_i-1) x_i-2x_1\right](\lambda_{1}(\Gamma')+n_i)x_i\\
		&=x_1 x_i(\lambda_{1}(\Gamma')+3)(n_i-1)-2x_1 x_i(\lambda_{1}(\Gamma')+n_i)\\
		&=x_1 x_i\left[(\lambda_{1}(\Gamma')+3)(n_i-1)-2(\lambda_{1}(\Gamma')+n_i)\right]\\
		&=x_1 x_i\left[(n_i-3)\lambda_{1}(\Gamma')+n_i-3\right]\\
		&>x_1 x_i(\lambda_{1}(\Gamma')+1)\\
		&>0.
	\end{align*}
	This implies $\lambda_1(\dot{\Gamma})>\lambda_1(\Gamma')$, which contradicts the maximality of $\lambda_1(\Gamma^{\prime})$.
	
	From the four cases above, we conclude that the only possibility is $n_1=2$, $n_j=0$, and each of the remaining $r-1$ parts has size at most $1$. Hence, $n\le r$ which contradicts $n\ge r+1$. From the above discussion, we obtain $|n_i-n_j|\le 1$ for any $i,j\in [r]$.

	By Claim \ref{add one negative edge to T_{r}(n)}, we only need to consider whether the unique negative edge $v_1 v_2$ lies in a partition of size \(\lceil n/r\rceil\) or a partition of size \(\lfloor n/r\rfloor\). If $q=1$, then $n=r+s$. The negative edges can only lie within the partition class of size $2$, so we already obtain $\Gamma'=T_r^{*}(n)$.  If $q\ge 2$ and $s=0$, we have $\Gamma'= T_{r}^\star(n)=T_{r}^\diamond(n)$. If $q\ge 2$ and $1\le s<r$, by Theorem \ref{eigenvalue-T_{r}^{+-}(n)}, $\lambda_{1}(T_{r}^\diamond(n))<\lambda_{1}(T_{r}^\star(n))$. By the maximality of $\lambda_1(\Gamma^{\prime})$, $\Gamma'= T_{r}^\star(n)$.

	This completes the proof.\qed
\end{Tproof}

\begin{prop}\label{the bound of Tr(n)}
	For every $r\ge 2$ and $n\ge r + 1$,
	\[
	\lambda_{1}(T_{r}^\star(n))<\frac{2e(T_r(n))}{n}.
	\]
\end{prop}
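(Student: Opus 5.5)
The plan is to compare $\lambda_1(T_r^\star(n))$ with the average degree of $T_r(n)$ by evaluating the decreasing function $\Phi_+$ at the point $t_0=\frac{2e(T_r(n))}{n}$. By Lemma \ref{eigenvalue of T_{r}^{+}(n)}, $\lambda_1(T_r^\star(n))$ is the unique root of $\Phi_+(t)=1$ with $t\ge 1$, and $\Phi_+$ is strictly decreasing on $t>1$ (Lemma \ref{eigenvalue of A(Sigma)}). Hence it suffices to check that $t_0\ge 1$ and $\Phi_+(t_0)<1$; in the exceptional case $(r,q,s)=(2,1,1)$ we have $\lambda_1=1$ and $t_0=\frac{2\cdot 2}{3}>1$, so that case is immediate.

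If $s=0$, then $T_r(n)$ is regular, $t_0=L=\lambda_1(T_r(n))$, and the claim follows directly from Theorem \ref{eigenvalue-T_{r}^{+-}(n)}, since $\lambda_1(T_r^\star(n))<\lambda_1(T_r(n))=t_0$. So assume $1\le s<r$. Here $e(T_r(n))=\frac{1}{2}\bigl(n^2-(r-s)q^2-s(q+1)^2\bigr)$, hence $t_0=n-\frac{(r-s)q^2+s(q+1)^2}{n}$, which is the weighted average of the degrees $n-q$ and $n-q-1$. We write $\Phi_+(t_0)=\Phi_T(t_0)-\Delta_{q+1}(t_0)$, using the function $\Delta_k$ from the proof of Theorem \ref{eigenvalue-T_{r}^{+-}(n)}, and must show $\Phi_T(t_0)-1<\Delta_{q+1}(t_0)$. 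Now $t_0<\lambda_1(T_r(n))$ (strictly, since $T_r(n)$ is irregular), so $\Phi_T(t_0)>1$, and the proof reduces to a quantitative comparison.

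To bound $\Phi_T(t_0)-1$, note that at $t=n-q$ the two terms of $\Phi_T$ equal $\frac{(r-s)q}{n}+\frac{s(q+1)}{n+1}$, so $\Phi_T(n-q)-1=-\frac{s(q+1)}{n(n+1)}$. Since $t_0$ differs from $n-q$ by less than $1$ and $|\Phi_T'(t)|\le\sum_i n_i/(t+n_i)^2\approx 1/n$, we get $\Phi_T(t_0)-1=O(1/n^2)$ with an explicit constant; alternatively one can substitute $t_0$ into the quadratic $f(t)$ from Lemma \ref{eigenvalue of T_{r}(n)} and obtain the exact expression $1-\Phi_T(t)=\frac{f(t)}{(t+q)(t+q+1)}$. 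On the other side, $\Delta_{q+1}(t_0)=\frac{2t_0}{(t_0+1)(t_0+q+1)(t_0+q+1-\frac{2}{t_0+1})}$, which is of order $2/n^2$.

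The main obstacle is this final inequality $-f(t_0)<(t_0+q)(t_0+q+1)\Delta_{q+1}(t_0)$. The plan is to compute $-f(t_0)$ explicitly: it is a rational expression in $n,q,s,r$ of size roughly $\frac{s(r-s)}{r}\cdot\frac{\text{const}}{n}$, or possibly bounded. We compare it with the right-hand side, which is about $\frac{2t_0(t_0+q)}{(t_0+1)(t_0+q+1)}\approx 2$, then clear denominators and verify positivity of the resulting polynomial for $q\ge 1$, $1\le s\le r-1$. Small cases, such as $q=1$ or $n$ close to $r+1$, may need a separate direct check. If the asymptotic margin turns out to be too tight, a fallback is to use Theorem \ref{Classical spectral Turan for s-g} with a Rayleigh quotient argument: take the Perron-type vector $\mathbf{x}$ of $T_r^\star(n)$ and bound $\lambda_1=\mathbf{x}^TA\mathbf{x}$ directly, exploiting that the negative edge strictly lowers the value compared with the positive contribution.
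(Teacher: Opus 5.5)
Your reduction is the same as the paper's: evaluate $\Phi_+$ at $t_0=2e(T_r(n))/n$, settle $s=0$ through $\lambda_1(T_r^\star(n))<\lambda_1(T_r(n))=L=t_0$, and for $1\le s<r$ reduce to $\Phi_T(t_0)-1<\Delta_{q+1}(t_0)$, i.e.\ $-f(t_0)<(t_0+q)(t_0+q+1)\Delta_{q+1}(t_0)$. Everything up to that point is correct, including the identity $1-\Phi_T(t)=f(t)/((t+q)(t+q+1))$.

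The gap is that this final inequality, the only substantive step, is never proved. You only announce a plan for it, your first guess for the size of $-f(t_0)$ is wrong (it is not $O(1/n)$), and you leave open whether the margin suffices. Your derivative-based alternative is also unreliable as sketched. The first-order terms cancel, because $|\Phi_T'(t_0)|$ and $1/(n+1)$ are both about $1/n$. A crude bound such as $|\Phi_T'|\le n/(n-1)^2$ on $[t_0,n-q]$ gives only $\Phi_T(t_0)-1$ of roughly $3/n^2$, which exceeds $\Delta_{q+1}(t_0)\approx 2/n^2$ (e.g.\ $q=1$, $s=r-1$). The bound $|\Phi_T'(t_0)|\le 1/n$ is false in general (at $n=5$, $r=2$ it is about $0.206$).

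The exact route you mention closes the gap at once. Put $c=s(q+1)$, so $n-c=(r-s)q$ and $t_0=n-q-c/n$. Substituting $u=t+q$ gives $f(t)=u^2-(n-1)u-(n-c)$, hence $-f(t_0)=c(n-c)/n^2\le \tfrac14$. This is the paper's identity $\Phi_T(a)-1=\frac{c(n-c)}{(n^2-c)(n^2+n-c)}$ in another form.

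On the other side, $(t_0+q)(t_0+q+1)\Delta_{q+1}(t_0)=\frac{2t_0(t_0+q)}{(t_0+1)(t_0+q+1)-2}$; you dropped the $-2$. Since $t_0\ge\frac{n^2-1}{2n}>1$, we have $(t_0+1)(t_0+q+1)\le 4t_0(t_0+q)$, so this quantity exceeds $\tfrac12$. Thus $\Phi_+(t_0)<1$, and monotonicity gives $\lambda_1(T_r^\star(n))<t_0$.

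The paper instead bounds the two sides separately, by $\frac{1}{4n(n-1)}$ and $\frac{n-1}{n(n+1)^2}$. That only works for $n\ge 4$ and forces a separate check at $n=3$. Normalizing by $(t_0+q)(t_0+q+1)$ as above gives a constant-factor margin valid for every $n\ge 3$. Your Rayleigh-quotient fallback is then unnecessary, and as stated it is not an argument.
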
	

\begin{proof} Set $$a=\frac{2e(T_r(n))}{n},~~c=s(q+1)$$
	If $s=0$, then $T_r(n)$ is $a$-regular, so $\Phi_T(a)=1$. Recall that
	$$
	\Phi_k(t) = \Phi_T(t) - h_k(t) + g_k(t)
	$$
	and
	$$
	\Delta_k(a)= h_k(t) - g_k(t)= \frac{2t}{(t+1)(t+k)\left(t + k - \dfrac{2}{t+1}\right)}>0
	$$
	We get $\Delta_q(a)<1$. Since $\Delta_q$ is strictly decreasing and $\Phi_q(\lambda_{1}(T_{r}^\star(n)))=1$, we obtain $\lambda_{1}(T_{r}^\star(n))<a$.
	
	Now suppose that $s>0$. There are $n - c$ vertices in parts of size $q$ and $c$ vertices in parts of size $q+1$. Since $2e(T_r(n)) = n^2 - nq - c$,
	we have
	$$
	a = n - q - \frac{c}{n}.
	$$
	Therefore
	\begin{equation}\label{eq-1}
		\Phi_T(a) - 1 = \frac{n-c}{a+q} + \frac{c}{a+q+1} - 1 = \frac{c(n-c)}{(n^2 - c)(n^2 + n - c)}.
	\end{equation}
	Since $0 < c < n$, we have
	\[
	c(n-c) \leq \frac{n^2}{4},~~n^2 - c > n(n-1),~~n^2 + n - c > n^2.
	\]
	Hence,
	\begin{equation}\label{eq-2}
		\Phi_T(a) - 1 < \frac{1}{4n(n-1)}.
	\end{equation}
	Clearly, $e(T_r(n))\ge e(T_2(n))=\lfloor \frac{n^2}{4} \rfloor$. Then
	\begin{equation}\label{eq-3}
		a=\frac{2e(T_r(n))}{n}\ge \frac{n^2-1}{2n}.
	\end{equation}
	Since $a + q + 1 = n + 1 - \frac{c}{n} < n + 1$ and $a + 1 < n + 1$, by \eqref{eq-3}, we have
	\begin{equation}\label{eq-4}
		\Delta_{q+1}(a) = \frac{2a}{(a + q + 1)\big((a + 1)(a + q + 1) - 2\big)} > \frac{n - 1}{n(n + 1)^2}.
	\end{equation}
	By \eqref{eq-2}, \eqref{eq-3} and \eqref{eq-4}, we have
	\[
	\Phi_{q+1}(a)=\Phi_T(a) -\Delta_{q+1}(a)<1+\frac{1}{4n(n-1)}-\frac{n - 1}{n(n + 1)^2}.
	\]
	For $n \ge 4$,
	\[
	\frac{n - 1}{n(n + 1)^2} > \frac{1}{4n(n - 1)}.
	\]
	By Lemma \ref{eigenvalue of A(Sigma)}, $\Phi_k$ is strictly decreasing. Hence, we obtain $\lambda_{1}(T_{r}^\star(n))< a$.
	
	For $n=3$ and $r=2$, we have
	\[
	\lambda_1(T_2^*(3))=1<\frac{4}{3}=a.
	\]
	Thus, for every admissible $r,n$,
	$$
	\lambda_1(T_r^\star(n)) < \frac{2t_r(n)}{n}.
	$$
	
	This completes the proof.\qed
\end{proof}

\begin{remark}
	{\em (a)~Let $\Gamma$ be an unbalanced signed graph of order $n$ with $\omega_{b}(\Gamma)=\omega_{b}$. By Theorems \ref{Classical spectral Turan for s-g} and \ref{eigenvalue-T_{r}^{+-}(n)}, we have
		\[
		\lambda_{1}(\Gamma)\le \lambda_{1}(T_{\omega_{b}}^\star(n))< \lambda_{1}(T_{\omega_{b}}(n))\le \left(1-\frac{1}{\omega_{b}}\right)n.
		\]
		This implies that Theorem \ref{Classical spectral Turan for s-g}  yields sharper bounds than Lemma \ref{Wang-Yan-Qian-LAA-2021} when $\Gamma$ is unbalanced.
		
		(b)~Let $\Gamma$ be an unbalanced $\mathcal{K}^{+}_{r+1}$-free signed graph. By the Rayleigh principle, we have
		\[
		\frac{2(|E^+(\Gamma)|-|E^-(\Gamma)|)}{n}\le \lambda_1(\Gamma).
		\]
		By the definition of $S(\Gamma)$, we obtain $S(\Gamma)\le \frac{n}{2}\lambda_1(\Gamma)$. By Proposition \ref{the bound of Tr(n)} and Theorem \ref{Classical spectral Turan for s-g}, we have
		\[
		S(\Gamma)\le \frac{n}{2}\lambda_1(\Gamma)\le \frac{n}{2}\lambda_1(T_{r}^\star(n))<e(T_r(n)).
		\]
		So, $S(\Gamma)\le e(T_r(n))-1=S(T_{r}^\star(n))$. Thus, the spectral bound of Theorem \ref{Classical spectral Turan for s-g} implies the edge bound given by Theorem \ref{signed Turan theorem}.

		(c)~Theorem \ref{Classical spectral Turan for s-g} can not imply the extremal-graph characterization of Theorem \ref{signed Turan theorem}, since there exists an $n$-vertex unbalanced signed graph \(\Gamma\) such that $\Gamma \not \sim T_{r}^\diamond(n)$, \(\lambda_{1}(\Gamma)< \lambda_{1}(T_{r}^\star(n))\) and \(S(\Gamma)\ge S(T_{r}^{\star}(n))\). For example, let $r=2$ and $n=6$, and suppose that $\Gamma$ has the adjacency matrix
		\[
		A(\Gamma)=
		\begin{pmatrix}
			0 & 1 & 1 & 0 & 1 & 0 \\
			1 & 0 & 1 & 1 & 0 & 0 \\
			1 & 1 & 0 & -1 & 0 & 1 \\
			0 & 1 & -1 & 0 & 1 & 1 \\
			1 & 0 & 0 & 1 & 0 & 1 \\
			0 & 0 & 1 & 1 & 1 & 0
		\end{pmatrix}.
		\]
		Then $\Gamma$ is unbalanced, $\Gamma \not \sim T_{2}^\diamond(6)$ and $\epsilon(\Gamma)=1$. Hence,
		\[
		8=S(\Gamma)=S(T_2^\star(6)).
		\]
		By simple computation, we obtain that the characteristic polynomial of $\Gamma$ is
		\[
		\chi_\Gamma(x)=(x^3-7x-1)(x^3-3x+1),
		\]
		and the characteristic polynomial of $T_2^\star(6)$ is
		\[
		\chi_{T_2^\star(6)}(x)=x^{2}(x-1)(x^{3}+x^{2}-9x-3).
		\]
		Let $f(x)=x^{3}-7x-1~\text{and}~g(x)=x^{3}+x^{2}-9x-3$.
		Clearly, \(\lambda_{1}(\Gamma)\) and \(\lambda_{1}(T_{2}^\star(6))\) are the largest zeros of \(f(x)\) and \(g(x)\), respectively. Since
		\[
		f(1+\sqrt{3}) = 2-\sqrt{3} > 0~,~f(2)<0,
		\]
		and $f$ is strictly increasing on $[2,\infty)$, we have $2 < \lambda_{1}(\Gamma) < 1+\sqrt{3}$. Using $f(\lambda_{1}(\Gamma))=0$, we have
		\[
		g(\lambda_{1}(\Gamma)) = \lambda_{1}(\Gamma)^2 - 2\lambda_{1}(\Gamma) - 2 = (\lambda_{1}(\Gamma)-1)^2 - 3 < 0.
		\]
		The polynomial $g$ is strictly increasing on $[2,\infty)$, with $g(2) < 0$ and $g(3) > 0$. This implies $\lambda_1(\Gamma) < \lambda_1(T_2^\star(6))$.
		
	}
	
\end{remark}

Let $r=2$. Theorem \ref{Classical spectral Turan for s-g} implies the following result immediately.
\begin{cor}\label{balanced triangle}
	Let $\Gamma=(G,\sigma)$ be an unbalanced signed graph of order $n~(\ge 3)$. If $\Gamma$ is $\mathcal{K}^{+}_{3}$-free, then
	\[
	\lambda_{1}(\Gamma)\le \lambda_{1}(T_{2}^\star(n)),
	\]
	with equality holding if and only if $\Gamma \sim T_{2}^\star(n)$.
\end{cor}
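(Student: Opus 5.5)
This corollary is the special case $r=2$ of Theorem \ref{Classical spectral Turan for s-g}, so the plan is simply to check that the hypotheses match and then invoke that theorem.

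\textbf{Step 1 (matching the hypotheses).} Set $r=2$ in Theorem \ref{Classical spectral Turan for s-g}. That theorem requires $r\ge 2$ and $n\ge r+1=3$; both hold, since the corollary assumes $n\ge 3$. Its remaining hypotheses are that $\Gamma$ is unbalanced and $\mathcal{K}^{+}_{3}$-free, which are exactly the assumptions here.

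\textbf{Step 2 (conclusion).} Theorem \ref{Classical spectral Turan for s-g} then gives
\[
\lambda_1(\Gamma)\le \lambda_1(T_2^\star(n)),
\]
with equality if and only if $\Gamma\sim T_2^\star(n)$. This is precisely the statement of the corollary.

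\textbf{Step 3 (small case).} The extreme case $n=3$ should be checked for consistency. Then $T_2^\star(3)$ is an unbalanced triangle, and the only unbalanced signed graph on three vertices is a negative triangle. By Lemma \ref{eigenvalue of T_{r}^{+}(n)}, $\lambda_1(T_2^\star(3))=1$, so the statement reduces to the trivial fact that the negative triangle is the only candidate.

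There is no real obstacle. The one point needing care is that $n\ge r+1$ in Theorem \ref{Classical spectral Turan for s-g} becomes $n\ge 3$ when $r=2$, and this is exactly where the theorem's own proof handles the degenerate configuration $n=r+1$ (Claim \ref{complete r-partite}, the case $p<r$). So no extra argument is needed beyond the invocation.
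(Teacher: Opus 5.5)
Your proposal is correct and is exactly the paper's argument: the paper states that the corollary follows immediately from Theorem~\ref{Classical spectral Turan for s-g} by taking $r=2$, where the hypothesis $n\ge r+1$ becomes $n\ge 3$. Your $n=3$ consistency check (the negative triangle, with $\lambda_1(T_2^\star(3))=1$ from the case $(r,q,s)=(2,1,1)$) is accurate but not needed.
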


Noting that the underlying graph of the extremal graph in Corollary \ref{balanced triangle} is non-bipartite, we conclude that Corollary \ref{balanced triangle} is a signed version of \cite[Theorem~1.4]{Lin-Ning-Wu-CPC-2021}. The following corollary gives the result obtained by Brunetti and Stani{\'c}.
\begin{cor}\emph{(See \cite{Brunetti-Stanic-CAM-2022})}
	Every unbalanced signed graph $\Gamma$ on $n \ge 3$ vertices satisfies
	\[
	\lambda_1(\Gamma) \le \beta_n := \frac{n - 4 + \sqrt{n^2 + 4n - 12}}{2}.
	\]
	Equality holds if and only if $\Gamma$ is switching equivalent to the complete signed graph having exactly one negative edge.
\end{cor}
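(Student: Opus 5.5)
The plan is to obtain this as the special case $r=n-1$ of Theorem~\ref{Classical spectral Turan for s-g}, followed by an explicit computation of $\lambda_1(T_{n-1}^\star(n))$ from Lemma~\ref{eigenvalue of T_{r}^{+}(n)}.

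\emph{Step 1: checking the hypotheses.} Set $r=n-1$. Since $n\ge 3$, we have $r\ge 2$ and $n=r+1\ge r+1$, as the theorem requires. We must show that an unbalanced $\Gamma$ on $n$ vertices is $\mathcal{K}^{+}_{n}$-free. A subgraph with underlying graph $K_n$ uses all $n$ vertices and all possible edges, so it is $\Gamma$ itself. Since $\Gamma$ is unbalanced, that subgraph is unbalanced, so $\Gamma$ contains no member of $\mathcal{K}^{+}_{n}$. Theorem~\ref{Classical spectral Turan for s-g} then gives $\lambda_1(\Gamma)\le\lambda_1(T_{n-1}^\star(n))$, with equality if and only if $\Gamma\sim T_{n-1}^\star(n)$.

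\emph{Step 2: identifying the extremal graph.} We have $T_{n-1}(n)=K_n-e$, with one class of size $2$ and the other $n-2$ classes of size $1$. Adding a negative edge inside the class of size $2$ gives the complete signed graph with exactly one negative edge. This settles the equality characterization.

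\emph{Step 3: computing the index.} Here $q=1$ and $s=1$. For $n\ge 4$ we have $(r,q,s)\neq(2,1,1)$, so Lemma~\ref{eigenvalue of T_{r}^{+}(n)} says $\lambda_1$ is the unique root $t>1$ of
\[
\Phi_+(t)=\frac{n-2}{t+1}+g_2(t)=1 .
\]
First simplify
\[
g_2(t)=\frac{2-\frac{2}{t+1}}{t+2-\frac{2}{t+1}}=\frac{2t}{t^2+3t}=\frac{2}{t+3}.
\]
Clearing denominators, the equation becomes
\[
(n-2)(t+3)+2(t+1)=(t+1)(t+3),
\]
that is,
\[
t^2-(n-4)t-(3n-7)=0 .
\]
Its larger root is
\[
\frac{n-4+\sqrt{(n-4)^2+4(3n-7)}}{2}=\frac{n-4+\sqrt{n^2+4n-12}}{2}=\beta_n .
\]
In the exceptional case $n=3$, Lemma~\ref{eigenvalue of T_{r}^{+}(n)} gives $\lambda_1=1$. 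Directly, $\beta_3=(-1+\sqrt{9})/2=1$, so the formula still holds.

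The only real point to check is Step 1, namely that unbalancedness rules out a balanced spanning $K_n$. The remaining steps are routine algebra.
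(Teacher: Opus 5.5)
Your proposal is correct and follows the paper's proof. You specialize Theorem~\ref{Classical spectral Turan for s-g} to $r=n-1$, identify $T_{n-1}^\star(n)$ as the complete signed graph with one negative edge, and show its index is $\beta_n$ via $t^2-(n-4)t-(3n-7)=0$. The only cosmetic difference is how you reach that quadratic: you use $\Phi_+$ from Lemma~\ref{eigenvalue of T_{r}^{+}(n)}, which already certifies that the root exceeding $1$ is the index (the other root is negative), whereas the paper uses the $2\times 2$ quotient matrix and checks the remaining eigenvalues $\pm 1$ directly.
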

\begin{proof}
	Note that $\Gamma$ is $\mathcal{K}^{+}_{n}$-free. Applying Theorem \ref{Classical spectral Turan for s-g} with $r = n - 1$, we have
	\[
	\lambda_{1}(\Gamma)\le \lambda_{1}(T_{n-1}^\star(n)).
	\]
	The graph $T_{n-1}^*(n)$ is the complete signed graph having exactly one negative edge.  Its adjacency matrix has quotient
	\[
	Q = \begin{pmatrix}
		-1 & n - 2 \\
		2 & n - 3
	\end{pmatrix}.
	\]
	The larger root of
	\[
	\det(xI - Q) = x^2 - (n - 4)x - (3n - 7)
	\]
	is $\beta_n$. It is the largest eigenvalue: the antisymmetric vector on the negative-edge endpoints has eigenvalue $1$, vectors summing to zero on the other $n - 2$ vertices have eigenvalue $-1$, and $\beta_n \ge 1$, with equality only at $n = 3$. Thus $\lambda_{1}(T_{n-1}^\star(n)) = \beta_n$, and both the inequality and equality statement follow from Theorem \ref{Classical spectral Turan for s-g}.
	\qed\end{proof}

\subsection{Proof~of~Theorem~\ref{Classical spectral Turan for s-g-2}}

\begin{Tproof}\textbf{~of~Theorem~\ref{Classical spectral Turan for s-g-2}.}~Suppose that $s\ne 0$. In this case, Claims \ref{eigenvector of one zero coordinate}--\ref{add one negative edge to T_{r}(n)} in Theorem \ref{Classical spectral Turan for s-g} all hold. If $q=1$, then $n=r+s$. The negative edges can only lie within the partition class of size $2$, so we already obtain $\Gamma'=T_r^{*}(n)$, a contradiction. If $q\ge 2$ and $1\le s<r$, by Theorem \ref{eigenvalue-T_{r}^{+-}(n)}, $\lambda_{1}(T_{r}^\diamond(n))<\lambda_{1}(T_{r}^\star(n))$. Since $\Gamma^{\prime} \not \sim T_{r}^\star(n)$, we have $\Gamma'= T_{r}^\diamond(n)$.
	
	In the following, we consider $s=0$. Let $\Gamma=(G,\sigma)$ be a signed graph having the maximum index over all $\mathcal{K}^{+}_{r+1}$-free unbalanced signed graph of order $n$ where $\Gamma \not \sim  T_{r}^\star(n)$. By Lemma \ref{Sun-Liu-Lan-LAA-2022}, let $\Gamma^{\prime}=(G,\sigma^{\prime})$ be the signed graph switching equivalent to $\Gamma$ for which $\lambda_{1}(\Gamma^{\prime})$ admits a non-negative eigenvector. Let $V(\Gamma^{\prime}) = \left \{v_1,v_2,\dots,v_n \right \}$ and $\mathbf{x}=\left (x_1,x_2,\dots,x_n \right )^T$ be the non-negative unit eigenvector of $A(\Gamma^{\prime})$ corresponding to $\lambda_1(\Gamma^{\prime})$. By Lemmas \ref{Zaslavsky-DAM-1982} and \ref{Hou-Tang-Wang-AMC-2019}, $\Gamma^{\prime}=(G,\sigma^{\prime})$ is also $\mathcal{K}^{+}_{r+1}$-free unbalanced signed graph with the maximum index and $\Gamma \not \sim  T_{r}^\star(n)$. Note that $\Gamma_{n}(r)$ is $\mathcal{K}^{+}_{r+1}$-free unbalanced signed graph and $\Gamma_{n}(r)\not \sim  T_{r}^\star(n)$. By the Rayleigh principle, we have
	\begin{equation}\label{Index of signed Turan graph-2}
		\lambda_{1}(\Gamma^{\prime})\ge \lambda_1(\Gamma_{n}(r))\ge \frac{\mathbf{1}^T A(\Gamma_{n}(r))\mathbf{1}}{\mathbf{1}^T \mathbf{1}}=L- \frac{s(r - s)}{rn} - \frac{4}{n}.
	\end{equation}
	\begin{claim}\label{eigenvector of one zero coordinate-1}
		The eigenvector $\mathbf{x}$ has at most one zero coordinate.
	\end{claim}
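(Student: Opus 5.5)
Suppose, for contradiction, that $\mathbf{x}$ has two zero coordinates, say $x_i=x_j=0$. We follow the proof of Claim \ref{eigenvector of one zero coordinate}, with the lower bound \eqref{Index of signed Turan graph-1} replaced by \eqref{Index of signed Turan graph-2}.

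\textbf{Step 1: upper bound.} Since $x_i=x_j=0$ and $\mathbf{x}$ is a unit vector, the Rayleigh principle gives
\[
\lambda_1(\Gamma')=\mathbf{x}^TA(\Gamma')\mathbf{x}\le \lambda_1(\Gamma'-v_i-v_j).
\]
Deleting vertices cannot create a balanced clique, so $\omega_b(\Gamma'-v_i-v_j)\le r$. Lemma \ref{Wang-Yan-Qian-LAA-2021} then yields
\[
\lambda_1(\Gamma')\le \left(1-\frac1r\right)(n-2).
\]

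\textbf{Step 2: compare with the lower bound.} Here $s=0$, so \eqref{Index of signed Turan graph-2} reads $\lambda_1(\Gamma')\ge L-\frac4n$. Subtracting the two bounds gives
\[
L-\frac4n-\left(1-\frac1r\right)(n-2)=\frac{2(r-1)}{r}-\frac4n=\frac{2(r-1)n-4r}{rn}.
\]
We need this to be positive, that is, $(r-1)n>2r$.

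\textbf{Step 3: check positivity.} Since $s=0$ and $n\ge r+1$, we have $q\ge 2$, so $n\ge 2r$.
\begin{itemize}
\item If $r\ge 3$: $(r-1)n\ge 2r(r-1)>2r$.
\item If $r=2$: the condition becomes $n>4$. Since $n$ is even and $n\ge 4$, only $n=4$ fails, where the difference is exactly $0$.
\end{itemize}
So outside $(r,n)=(2,4)$ the lower bound strictly exceeds the upper bound, which is the desired contradiction.

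\textbf{Step 4: the boundary case $(r,n)=(2,4)$.} This is the main obstacle, because the crude bound of Step 1 is not strict. The cleanest fix is to sharpen Step 1. The graph $\Gamma'-v_i-v_j$ has only $2$ vertices, so its index is at most $1<2-1=\left(1-\frac12\right)(4-2)$. It remains to check that the lower bound is large enough. In this case $\Gamma_4(2)$ is $C_4$ with one negative edge, whose index is $\sqrt2$. Then
\[
\lambda_1(\Gamma')\ge\sqrt2>1\ge\lambda_1(\Gamma'-v_i-v_j),
\]
which contradicts Step 1.

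Note that we must use the actual value $\lambda_1(\Gamma_n(r))=\sqrt2$ here, not just the Rayleigh quotient bound $L-\frac4n=1$. The Rayleigh quotient only gives $1$ and would leave the equality case open.

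In every case $\mathbf{x}$ has at most one zero coordinate, which proves the claim.
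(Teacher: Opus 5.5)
Your argument is correct and is essentially the paper's: you set the Wang--Yan--Qian bound $\lambda_1(\Gamma'-v_i-v_j)\le(1-\frac1r)(n-2)$ against the Rayleigh lower bound $L-\frac4n$ coming from $\Gamma_n(r)$. You are also more precise than the paper in noting that $s=0$ forces $q\ge 2$ (so its $q=1$ branch is vacuous) and that the only tight case is $(r,n)=(2,4)$. In that case the paper gets strictness by observing that $\mathbf{1}$ is not an eigenvector of $A(\Gamma_n(r))$, while you compute $\lambda_1(\Gamma_4(2))=\sqrt2$; both work, so your remark that the exact value is \emph{necessary} is overstated. Finally, ``$1<2-1$'' is a slip: the upper bound is not improved, and all the strictness comes from the lower bound $\sqrt2>1$.
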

	\noindent\emph{Proof of Claim 9.}~If $\mathbf{x}$ has two zero coordinates, say $x_i$ and $x_j$. By Lemma \ref{Wang-Yan-Qian-LAA-2021}, then
	$$
	\lambda_1(\Gamma^{\prime})=\mathbf{x}^{T}A(\Gamma^{\prime})\mathbf{x}\le \lambda_1(\Gamma^{\prime}-v_i-v_j)\le \left(1-\frac{1}{\omega_b(\Gamma)}\right)(n-2)\le \left(1-\frac{1}{r}\right)(n-2).
	$$
	If $q\ge 2$, then $n\ge 2r+s$, so
	\[
	2(r-1)n - s(r-s)-4r \ge (4r+s)(r-2)+s^2.
	\]
	Clearly, $(4r+s)(r-2)+s^2=0$ if and only if $r=2$ and $s=0$. Hence,
	\begin{align*}
		&L- \frac{s(r - s)}{rn} - \frac{4}{n}-\left(1-\frac{1}{r}\right)(n-2)
		\\
		&= \frac{2(r-1)n - s(r-s)-4r}{rn}\\
		&\ge 0,
	\end{align*}
	with equality holding if and only if $r=2$ and $s=0$. We suppose that $r=2$ and $s=0$. Then
	\[
	\lambda_1(\Gamma^{\prime})=\lambda_1(\Gamma_{n}(r))=\frac{\mathbf{1}^T A(\Gamma_{n}(r))\mathbf{1}}{\mathbf{1}^T \mathbf{1}}.
	\]
	This implies $\lambda_1(\Gamma_{n}(r))\mathbf{1}=A(\Gamma_{n}(r))\mathbf{1}$, a contradiction. Then
	\[
	L- \frac{s(r - s)}{rn} - \frac{4}{n}>\left(1-\frac{1}{r}\right)(n-2),
	\]
	which contradicts equality \eqref{Index of signed Turan graph-2}.
	
	If $q=1$, then $n=r+s$ with $s\ge 1$, and
	\[
	2(r-1)n - s(r-s)-4r = 2r(r-3)+s(r-2)+s^2.
	\]
	Clearly, if $r\ge 3$, then $2(r-1)n - s(r-s)-4r>0$. If $r=2$, we have $s=1$ and then $n=3$. In this case, $\Gamma'\sim T_{2}^\star(3)$, a contradiction.
	Hence,
	\begin{align*}
		&L- \frac{s(r - s)}{rn} - \frac{2}{n}-\left(1-\frac{1}{r}\right)(n-2)
		\\
		&= \frac{2(r-1)n - s(r-s)-4r}{rn}\\
		&>0,
	\end{align*}
	which contradicts equality \eqref{Index of signed Turan graph-2}.

	Now, Claims \ref{eigenvector of one zero coordinate}--\ref{complete r-partite} in Theorem \ref{Classical spectral Turan for s-g} all hold. If \(p<r\), then $\Gamma'=T_{r}^\star(n)$, a contradiction. We now consider the case \(p=r\). By Claim \ref{complete r-partite}, $\Gamma'[V(\Gamma^{\prime})\setminus \left\{v_1,v_2\right\}]$ is a complete $r$-partite signed graph with all positive edges. Recall that $\Gamma'$ is $\mathcal{K}_{r+1}^{+}$-free. Hence, \(v_1\) is adjacent to vertices from at most \(r-1\) parts. By the maximality of $\lambda_{1}(\Gamma')$, we can always add positive edges so that \(v_1\) is adjacent to all vertices in \(r-1\) parts. The same holds for \(v_2\). We divide our discussion into the following two cases.
	
	\noindent\emph{Case~1.}~\(v_1\) and \(v_2\) lie in the same part. At this point, Claims \ref{complete r-partite-one negative edge} and \ref{add one negative edge to T_{r}(n)} hold. If $q=1$, then $n=r+s$. The negative edges can only lie within the partition class of size $2$, so we already obtain $\Gamma'=T_r^{*}(n)$, a contradiction. If $q\ge 2$ and $s=0$, we have $\Gamma'= T_{r}^\star(n)=T_{r}^\diamond(n)$, a contradiction.
	
	\noindent\emph{Case~2.}~\(v_1\) and \(v_2\) do not lie in the same part. At this point, $\Gamma'$ is a complete $r$-partite signed graph with exactly one negative edge. By Lemma \ref{Cai-Zhou-ar-2026}, we have $\Gamma'=\Gamma_{n}(r)$.
	
	This completes the proof.\qed
	
\end{Tproof}

\section*{Statements and Declarations}

\noindent \textbf{Competing interests}~~No potential competing interest was reported by the authors.

\medskip

\noindent \textbf{Data availability statements}~~Data sharing not applicable to this article as no datasets were generated or analysed during the current study.

\end{document}